\newcounter{statement}
\newtheorem{conjecture}[statement]{Conjecture}
\newtheorem{corollary}[statement]{Corollary}
\newtheorem{definition}[statement]{Definition}
\newtheorem{lemma}[statement]{Lemma}
\newtheorem{proposition}[statement]{Proposition}
\newtheorem{remark}[statement]{Remark}
\newtheorem{theorem}[statement]{Theorem}
\newtheorem*{theorem*}{Theorem}
\newcommand{\defeq}{\stackrel{\textnormal{def}}{=}}
\newcommand{\tn}{\textnormal}
\newcommand{\Aut}{\textnormal{Aut}}
\newcommand{\Gl}{\textnormal{GL}}
\newcommand{\im}{\operatorname{im}}
\newcommand{\Hom}{\textnormal{Hom}}
\newcommand{\Mcg}{\textnormal{Mcg}}
\newcommand{\Out}{\textnormal{Out}}
\newcommand{\onto}{\twoheadrightarrow}
\newcommand{\ad}{\textnormal{ad}}
\newcommand{\Stab}{\textnormal{Stab}}
\newcommand{\Torr}[1][1]{\mathcal{I}^{(#1)}}
\title[CSP for nilpotent gps and subsurfaces]{Congruence subgroup property for nilpotent groups and subsurface subgroups of Mapping Class Groups}
\author{Adam Klukowski}
\address{Mathematical Institute \\ University of Oxford \\ UK}
\email{klukowski@maths.ox.ac.uk}
\begin{document}

\begin{abstract}
We prove the Congruence Subgroup Property for two families of subgroups of Mapping Class Groups of finite-type surfaces. The first one is related to nilpotent quotients of the fundamental group and Johnson filtration, and along the way we give an elementary proof of a theorem of Ben-Ezra--Lubotzky. The second family consists of certain geometric subgroups obtained via subsurface inclusions.
\end{abstract}

\maketitle

\section{Introduction}

Many important groups arise from automorphisms of other groups. Examples include the families $\Gl_n\mathbb{Z}\curvearrowright\mathbb{Z}^n$, $\Aut(F_n)\curvearrowright F_n$, as well as the outer automorphism groups $\Out(F_n)$. Another family of examples are the Mapping Class Groups $\Mcg(\Sigma)$ for a surface $\Sigma$, which can be regarded as a subgroup of $\Out(\pi_1\Sigma)$ according to the theorem of Dehn-Nielsen-Baer \cite[Chapter 8]{primer_on_mapping_class_groups_FarbMargalit12}. In such situations, actions on finite characteristic quotients of the base group induce a system of finite-index subgroups, called \emph{congruence subgroups}, which induce the \emph{congruence topology}. A basic question towards understanding their algebraic structure is whether every finite-index subgroup arises this way, or equivalently, whether the profinite and pro-congruence topologies agree. This is referred to as having the \emph{Congruence Subgroup Property (CSP)}.

Groups possessing the Congruence Subgroup Property include $\tn{SL}_n\mathbb{Z}$ for $n\geqslant 3$, by the work of Mennicke \cite{finite_factor_gps_unimodular_gp_Mennicke65} and Bass--Lazard--Milnor--Serre \cite{sousgroupes_indice_fini_dans_slnZ_BassLazardSerre64}\cite{solution_csp_sln_sp2n_BassMilnorSerre67} (see also \cite{developments_csp_after_BMS_PrasadRapinchuk10}). Another two groups with CSP are $\Aut(F_2)$ and $\Out(F_2)$, by the work of Asada \cite{faithfulness_monodromy_reps_certain_alg_curves_Asada01} (see also \cite{csp_for_autf2_grouptheoretic_pf_of_Asadas_thm_BuxErshovRapinchuk11}\cite{csp_low_rk_free_free_metabelian_gps_BenEzraLubotzky18}). An example where CSP fails is $\textnormal{SL}_2\mathbb{Z}$ (this is one of the consequences of its virtual freeness). This is somewhat surprising, given that the natural map $\Out(F_2)\to\tn{GL}_2\mathbb{Z}$ is an isomorphism; the explanation is that the actions of this group on $F_2$ and $\mathbb{Z}^2$ induce different families of congruence subgroups.

The main focus of this article is the Congruence Subgroup Property of Mapping Class Groups of finite-type surfaces. CSP was proven in genus 0 by Diaz-Donagi-Harbater \cite{every_curve_Hurwitz_space_DiazDonagiHarbater89} (see also \cite{congruence_subgroup_problem_for_pure_braid_groups_McReynolds10}), in genus 1 with punctures by Asada \cite{faithfulness_monodromy_reps_certain_alg_curves_Asada01}\cite{csp_for_autf2_grouptheoretic_pf_of_Asadas_thm_BuxErshovRapinchuk11} (with no punctures, the Mapping Class Group is $\tn{SL}_2\mathbb{Z}$ and the fundamental group is $\mathbb{Z}^2$, and we mentioned that CSP fails in this case), and in genus 2 by Boggi \cite{csp_for_hyperelliptic_modular_gp_Boggi09}. The question of CSP in genus at least 3 is open, and mentioned in the Kirby List \cite[Problem 2.10]{problems_lowdim_top_Kirby97}.
\begin{conjecture}\label{conj:mcg_csp}
Let $\Sigma$ be a surface of genus at least 3. Then every finite-index subgroup of $\Mcg(\Sigma)$ is a congruence subgroup.
\end{conjecture}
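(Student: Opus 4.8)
The plan is to reduce Conjecture~\ref{conj:mcg_csp} to the two families of subgroups treated in this paper, by an induction on the complexity $\xi(\Sigma)=3g-3+n$, and then to attack the remaining local-to-global gap directly. Recall first that CSP for $\Mcg(\Sigma)$ is equivalent to triviality of the \emph{congruence kernel}: the canonical surjection from the profinite completion $\widehat{\Mcg(\Sigma)}$ onto the closure of the image of $\Mcg(\Sigma)$ in $\Out(\widehat{\pi_1\Sigma})$ should be injective; equivalently, every finite-index subgroup must contain a principal congruence subgroup $\ker\!\big(\Mcg(\Sigma)\to\Out(\pi_1\Sigma/K)\big)$ for some characteristic finite-index $K\trianglelefteq\pi_1\Sigma$. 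I would dispose of punctures first, via the Birman exact sequence: point-pushing subgroups are quotients of surface groups, whose congruence theory is directly accessible, and one checks that the congruence topology is compatible with capping punctures and with the forgetful maps. This reduces the problem to understanding how $\widehat{\Mcg(\Sigma)}$ is assembled from the mapping class groups of proper subsurfaces, with base cases the low-complexity surfaces --- genus $\leqslant 2$, and genus $0$ with any number of punctures --- which are precisely the cases settled in the work recalled above.

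For the inductive step, cut $\Sigma$ along a simple closed curve $\gamma$ (separating or non-separating, lowering $\xi$), giving subsurface inclusions $\Sigma'\hookrightarrow\Sigma$ of the complementary pieces. Since $\Mcg(\Sigma)$ is generated by Dehn twists, and in fact by the mapping class groups of one-holed tori and four-holed spheres embedded in $\Sigma$, it is generated by such subsurface subgroups; each of these, by the subsurface CSP theorem of this paper, inherits from the congruence topology of $\Mcg(\Sigma)$ a topology agreeing with its own full profinite topology. Thus, given a finite-index $H\leqslant\Mcg(\Sigma)$, every generating subsurface subgroup $\Mcg(\Sigma')$ meets $H$ in a congruence subgroup $H\cap\Mcg(\Sigma')\supseteq\Gamma_{\Sigma'}(K')$, and the task is to manufacture from this family a single characteristic $K\trianglelefteq\pi_1\Sigma$ with $\Gamma_\Sigma(K)\leqslant H$.

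The main obstacle is exactly this assembly. A subgroup that is congruence on every proper subsurface need not be globally congruence, because a priori the congruence kernel of $\Mcg(\Sigma)$ could have a part invisible to all proper subsurfaces; ruling this out is a profinite analogue of the Alexander method, or of the action on the curve complex. Concretely, one wants the canonical map from $\widehat{\Mcg(\Sigma)}$ to the ``profinite amalgam'' of the completions $\widehat{\Mcg(\Sigma')}$ glued along the stabilizers appearing in the profinite curve complex to be injective, so that triviality of the congruence kernel can be tested subsurface by subsurface. Equivalently, one must exclude finite quotients of $\Mcg(\Sigma)$ restricting to congruence quotients on each $\Mcg(\Sigma')$ but not themselves congruence. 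I expect this to be the genuinely hard point: it asks, in effect, that $\widehat{\Mcg(\Sigma)}$ be rigidly built from surface-group data, which is close to the still-open Grothendieck--Teichm\"uller picture and is the reason Conjecture~\ref{conj:mcg_csp} remains open in genus $\geqslant 3$.

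A complementary route, through the first family of subgroups, is to locate the congruence kernel via the Johnson filtration $\mathcal{I}^{(k)}$ of the Torelli group $\mathcal{I}$: the congruence-type subgroups attached to nilpotent quotients of $\pi_1\Sigma$ are well-behaved by the theorem of this paper, so it would suffice to know that the $\mathcal{I}^{(k)}$ are cofinal among finite-index subgroups of $\mathcal{I}$ in the profinite sense --- that is, that $\widehat{\mathcal{I}}$ injects into $\Aut$ of the pronilpotent completion of $\pi_1\Sigma$. This runs into the poorly understood profinite structure of $\mathcal{I}$ in genus $\geqslant 3$, and I would expect the obstruction here to coincide with the one above: both routes ultimately reduce to controlling the finite quotients of the Torelli group, which is where the difficulty of the conjecture is concentrated.
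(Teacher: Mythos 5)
This statement is Conjecture~\ref{conj:mcg_csp}, which is explicitly an open problem (Kirby's Problem 2.10); the paper does not prove it, and indeed makes no claim to. Your proposal is not a proof either: you lay out two plausible strategies (decompose along subsurfaces and invoke Theorem~\ref{thm:csp_subsurface}; or work through the Johnson filtration and Theorem~\ref{thm:csp_nilpotent}), and then correctly identify that both collapse to the same unfilled gap, which you describe as the ``assembly'' problem of promoting subsurface-wise congruence control to global congruence control. That gap is real, and it is precisely what the paper isolates as Conjecture~\ref{conj:curve_orbit_separation}: one would need, for each finite-index $\Gamma\leq\Mcg(\Sigma)$ and each simple closed curve $\alpha$, a congruence subgroup $\Delta$ with $\Delta.\alpha\subseteq\Gamma.\alpha$, so that after passing to $\Delta$ one lands in a curve stabiliser to which Theorem~\ref{thm:csp_subsurface} applies. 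Nothing in your proposal, nor in the paper, supplies such a $\Delta$; the appeal to a ``profinite Alexander method'' or to rigidity of $\widehat{\Mcg(\Sigma)}$ as assembled from subsurface data is a restatement of the difficulty, not a resolution of it.

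To be clear about what your proposal does get right relative to the paper: the reduction architecture is sound and matches the paper's philosophy (Lemma~\ref{lmm:csp_ses} plus Theorem~\ref{thm:csp_subsurface} reduce CSP for $\Mcg(\Sigma)$ to CSP for curve stabilisers plus orbit separation), and your observation that the Johnson-filtration route runs into unknown finite quotients of the Torelli group is accurate. But since the crucial step is acknowledged to be missing, this cannot stand as a proof of the conjecture; it is an accurate description of why the conjecture is still open.
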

One motivation for this question is its link to the study of profinite rigidity of 3-manifold groups. The most notorious open problem in this area was stated by Bridson--Reid \cite[Conjecture 2.1]{profinite_rigidity_free_gps_Bridson2023}. Below we present a certain weakening of their conjecture (compare \cite[Corollary B]{csp_for_mcgs_and_residual_finiteness_hyp_gps_Wilton24}).
\begin{conjecture}\label{conj:pi_comm}
Suppose $\mathcal{M},\mathcal{N}$ are hyperbolic 3-manifolds such that profinite completions of their fundamental groups are isomorphic. Then $\mathcal{M},\mathcal{N}$ are commensurable, meaning that they share a common finite-degree covering space.
\end{conjecture}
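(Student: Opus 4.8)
The plan is to obtain Conjecture~\ref{conj:pi_comm} as a consequence of Conjecture~\ref{conj:mcg_csp} (together with the established low-genus cases of the Congruence Subgroup Property and standard input from $3$-manifold topology), in the spirit of \cite[Corollary~B]{csp_for_mcgs_and_residual_finiteness_hyp_gps_Wilton24}. I would first reduce to the fibered case. By Agol's virtual fibering theorem, $\mathcal{M}$ and $\mathcal{N}$ admit finite covers fibering over the circle. An isomorphism $\widehat{\pi_1\mathcal{M}}\cong\widehat{\pi_1\mathcal{N}}$ restricts to isomorphisms between the profinite completions of corresponding finite-index subgroups, and so induces an index-preserving bijection between the finite covers of $\mathcal{M}$ and those of $\mathcal{N}$; since the profinite completion of a $3$-manifold group detects the Thurston norm and which classes are fibered (Boileau--Friedl, Jaikin-Zapirain), a finite cover of $\mathcal{M}$ fibers precisely when the matching cover of $\mathcal{N}$ does. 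As commensurability is transitive, we may replace $\mathcal{M},\mathcal{N}$ by corresponding finite covers and assume that $\mathcal{M}=\mathcal{M}_\phi$ and $\mathcal{N}=\mathcal{N}_\psi$ are mapping tori of $\phi\in\Mcg(\Sigma)$, $\psi\in\Mcg(\Sigma')$, with the given isomorphism carrying the fibered character of $\pi_1\mathcal{M}_\phi$ to that of $\pi_1\mathcal{N}_\psi$.

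Next I would translate this into a statement about the monodromies. Surface groups are good in the sense of Serre, and for a surface bundle the fiber subgroup is separable in $\pi_1$ of the (hyperbolic) total space by Agol--Wise, so $\widehat{\pi_1\mathcal{M}_\phi}\cong\widehat{\pi_1\Sigma}\rtimes_{\widehat\phi}\widehat{\mathbb{Z}}$ with $\widehat{\pi_1\Sigma}$ recognisable as the kernel of the chosen fibered character. Matching fiber subgroups yields an isomorphism $\widehat{\pi_1\Sigma}\cong\widehat{\pi_1\Sigma'}$, which forces $\Sigma\cong\Sigma'$ because the genus and the number of punctures are visible in the profinite homology; moreover it conjugates $\widehat\phi$ to $\widehat{\psi}^{\,u}$ in $\Out(\widehat{\pi_1\Sigma})$ for some $u\in\widehat{\mathbb{Z}}^{\times}$, the ambiguity in how the stable letter is transported. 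Since $\phi,\psi$ lie in the image of $\Mcg(\Sigma)\to\Out(\widehat{\pi_1\Sigma})$, this conjugacy can be arranged inside the closure of that image, i.e.\ inside the congruence completion $\overline{\Mcg(\Sigma)}$.

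Now I would invoke the Congruence Subgroup Property: granting Conjecture~\ref{conj:mcg_csp} for $\Sigma$ (or its known instances when $\Sigma$ has genus at most $2$), the congruence and full profinite topologies on $\Mcg(\Sigma)$ coincide, so the previous step produces an element of $\widehat{\Mcg(\Sigma)}$ conjugating $\widehat\phi$ to $\widehat{\psi}^{\,u}$. Replacing $\phi,\psi$ by suitable powers to absorb the $\widehat{\mathbb{Z}}^\times$-ambiguity and passing to further finite covers of $\mathcal{M},\mathcal{N}$ (legitimate, since $\mathcal{M}_{\phi^n}$ is a finite cyclic cover of $\mathcal{M}_\phi$), we are reduced to the following: if $\phi,\psi\in\Mcg(\Sigma)$ become conjugate in $\widehat{\Mcg(\Sigma)}$, then $\mathcal{M}_\phi$ and $\mathcal{M}_\psi$ are commensurable. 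To settle this I would try to show that the surviving profinite conjugacy forces $\phi$ and a power of $\psi$ to be honestly conjugate after lifting to a suitable finite characteristic cover of the fiber --- exploiting the north--south dynamics of the (pseudo-Anosov) monodromies on $\mathcal{PML}(\Sigma)$ together with separability of the relevant cyclic subgroups --- so that the corresponding mapping tori become homeomorphic and hence $\mathcal{M}$ and $\mathcal{N}$ share a finite cover.

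The principal obstacle is exactly this last step: upgrading conjugacy of elements in $\widehat{\Mcg(\Sigma)}$ to \emph{geometric} commensurability of the mapping tori is in essence an instance of conjugacy separability for mapping class groups, which is not known in general. One would either prove the required instance --- plausibly restricted to the pseudo-Anosov classes that arise, where the action on measured laminations gives leverage --- or bypass the monodromies entirely and argue directly with the $3$-manifold groups, combining the profinite isomorphism with Mostow rigidity and the separability results of Agol and Wise. A secondary, more bookkeeping-type difficulty, which I expect to be routine but laborious, is keeping the fibered faces, the fiber topology, and the $\widehat{\mathbb{Z}}^{\times}$-ambiguity matched consistently through the repeated passages to finite covers.
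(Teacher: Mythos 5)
This statement is Conjecture~\ref{conj:pi_comm}, which the paper does \emph{not} prove: it is explicitly stated as an open conjecture, with the surrounding text noting that a positive answer is known only in a few special cases and that in general only almost profinite rigidity (finitely many commensurability classes, Liu) is available. So there is no proof in the paper to compare your attempt against.

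Your write-up is appropriately honest about this: you reduce to fibered mapping tori via Agol's virtual fibering and the profinite detection of Thurston norm and fiberedness (Boileau--Friedl, Jaikin-Zapirain), pass to the monodromies, invoke CSP to identify the congruence and full profinite closures of $\Mcg(\Sigma)$, and then correctly flag that the remaining step --- upgrading conjugacy of $\widehat\phi$ and $\widehat\psi^{\,u}$ in $\widehat{\Mcg(\Sigma)}$ to genuine commensurability of mapping tori --- amounts to a conjugacy-separability-type statement for mapping class groups that is not known. That gap, together with the reliance on Conjecture~\ref{conj:mcg_csp} itself for genus $\geqslant 3$ and on controlling the $\widehat{\mathbb{Z}}^{\times}$ ambiguity in the stable letter, is exactly why this remains open. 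In short: you have not given (and could not be expected to give) a proof, but your sketch is a reasonable account of the state of the art, closely paralleling the Wilton--Sisto conditional approach that the paper cites, and you have correctly identified where the real difficulties lie rather than papering over them.

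One small caution on the reduction step: commensurability of $\mathcal{M}$ and $\mathcal{N}$ is what you want to \emph{conclude}, so when you say ``replace $\mathcal{M},\mathcal{N}$ by corresponding finite covers'' you must be careful that the correspondence of covers given by $\widehat{\pi_1\mathcal{M}}\cong\widehat{\pi_1\mathcal{N}}$ is only an abstract index-preserving bijection; it does not by itself produce a common cover, which is the whole content of the conjecture. Passing to matching finite covers is fine for the reduction, but the phrase ``as commensurability is transitive'' is doing no work there and could mislead a reader into thinking some commensurability has already been established.
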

Positive answer to Conjecture \ref{conj:pi_comm} is known in a few special cases \cite{profinite_rigidity_hyp_4pctured_sphere_bundles_CheethamWest23}, which among other results rely on knowing CSP of low-genus Mapping Class Groups. In general, only almost profinite rigidity is known \cite{finite_vol_hyp_3mflds_almost_determined_finite_quots_Liu03}, meaning that hyperbolic 3-manifold groups with a prescribed profinite completion fall into finitely many commensurability classes. Apart from profinite rigidity, understanding finite-index subgroups would also be beneficial for a number of other problems concerning MCGs, such as the Ivanov conjecture \cite[Problem 2.11]{problems_lowdim_top_Kirby97} on virtual fibering of the moduli space, or property (T).

Usually, CSP is thought of as a property of the automorphism group. However, our approach is to look at analogous statements about simpler subgroups, similar to the philosophy of \cite{congruence_subgroup_problem_for_pure_braid_groups_McReynolds10} dealing with point-pushing subgroups and \cite{csp_symmetric_mcgs_Boggi24} investigating centralisers of finite subgroups. For these purposes it will be useful to have a more flexible definition.
\begin{definition}\label{def:csp}
Let $\lambda\colon\Lambda\to\Out(G)$ be a group homomorphism. A \emph{congruence subgroup} is any subgroup of $\Lambda$ of the form $(q_\star\circ\lambda)^{-1}(H)$, where $q\colon G\onto Q$ is a finite characteristic quotient of $G$, $q_\star\colon\Out(G)\to\Out(Q)$ is the induced map on outer automorphism groups, and $H\leq\Out(Q)$ is any subgroup. If $H$ is trivial, then its preimage is called \emph{principal}.

We say that $\lambda$ has the \emph{congruence subgroup property (CSP)} if every finite-index subgroup of $\Lambda$ containing $\ker\lambda$ is a congruence subgroup.
\end{definition}
Note that Definition \ref{def:csp} recovers the usual definition when $\lambda$ is the identity homomorphism; also, Conjecture \ref{conj:mcg_csp} can be phrased as the natural map $\Mcg(\Sigma)\to\Out(\pi_1)$ having CSP. It is not hard to see that every subgroup containing a principal congruence subgroup is itself congruence.

This article contains two main theorems. The first one, Theorem \ref{thm:csp_nilpotent}, holds for all nilpotent groups, and says that CSP can be determined by the action on abelianisation. This result is originally due to Ben-Ezra--Lubotzky \cite{csp_fg_nilp_gps_BenEzraLubotzky22}; we present another proof that is elementary and requires no background theory. We use Theorem \ref{thm:csp_nilpotent} to show that finite-index subgroups of Mapping Class Groups containing some term of the Johnson filtration are congruence subgroups. The second result, Theorem \ref{thm:csp_subsurface}, asserts that CSP of Mapping Class Groups behaves well under gluing smaller surfaces along their boundaries. As corollaries we show that many natural geometric subgroups of Mapping Class Groups have the Congruence Subgroup Property.

\subsection{Outer automorphisms of nilpotent groups}

The first theorem of this article is that outer automorphism groups of nilpotent groups possess the congruence subgroup property, as long as their abelianisations do.
\begin{theorem}\label{thm:csp_nilpotent}
Suppose $G$ is a finitely generated nilpotent group, and $\Lambda\leq\Out(G)$. If the linear action on abelianisation $\Lambda\to\Out(G_\textnormal{ab})$ has CSP, then $\Lambda\to\Out(G)$ has CSP.
\end{theorem}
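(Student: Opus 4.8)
The natural strategy is induction on the nilpotency class $c$ of $G$, reducing to the abelianisation one step at a time. For $c=1$ the group $G$ is abelian, $\Out(G)=\Aut(G)=\Out(G_{\tn{ab}})$, and there is nothing to prove. So suppose $c\geq 2$, put $Z=\gamma_c(G)$ — a finitely generated central characteristic subgroup — and $\bar G=G/Z$, which is nilpotent of class $c-1$ with $\bar G_{\tn{ab}}=G_{\tn{ab}}$. Automorphisms preserve $Z$, giving a natural map $\Out(G)\to\Out(\bar G)$; write $\bar\Lambda$ for the image of $\Lambda$ and $K=\ker(\Lambda\to\bar\Lambda)$. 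An automorphism of $G$ restricting to the identity on $\bar G$ has the form $g\mapsto g\cdot z(g)$ for a homomorphism $z\colon G_{\tn{ab}}\to Z$, so $K$ is a subquotient of $\Hom(G_{\tn{ab}},Z)$; in particular it is finitely generated abelian. Since $K$ acts trivially on $\bar G$ it acts trivially on $G_{\tn{ab}}$, so the linear action $\Lambda\to\Out(G_{\tn{ab}})$ factors through $\bar\Lambda\to\Out(\bar G_{\tn{ab}})$; and CSP passes to such a quotient — pull a finite-index subgroup of $\bar\Lambda$ back to $\Lambda$, apply CSP there to land it inside a principal congruence subgroup, and push that back down, using that $G_{\tn{ab}}$ and $\bar G_{\tn{ab}}$ have the same finite characteristic quotients. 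So $\bar\Lambda\to\Out(\bar G_{\tn{ab}})$ has CSP, and the inductive hypothesis applies: $\bar\Lambda\to\Out(\bar G)$ has CSP.

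Two compatibilities link the congruence structure of $\Lambda$ (relative to $G$) with that of $\bar\Lambda$ (relative to $\bar G$). First, a finite characteristic quotient $\bar Q$ of $\bar G$ is also one of $G$ (its preimage in $G$ is characteristic of finite index), and because $Z$ dies in $\bar Q$ the map $\Lambda\to\Out(\bar Q)$ factors through $\bar\Lambda$; hence the preimage in $\Lambda$ of any congruence subgroup of $\bar\Lambda$ is a congruence subgroup of $\Lambda$. Second — the key lemma — the congruence topology of $\Lambda$ restricts on $K$ to the full profinite topology. Work with a cofinal family of finite characteristic quotients $G\onto G/N_m$ whose defining subgroups satisfy that $\{N_m\cap Z\}$ is a cofinal characteristic filtration of the finitely generated abelian group $Z$. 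A short computation with inner automorphisms shows that, under the identification of $K$ with a subquotient of $\Hom(G_{\tn{ab}},Z)$, the subgroup $K\cap\ker(\Lambda\to\Out(G/N_m))$ lies in the kernel of reduction modulo $N_m\cap Z$. Since every finite abelian group embeds in an injective (divisible) one, every finite-index subgroup of $K$ is cut out by a homomorphism that extends from $K$ to $\Hom(G_{\tn{ab}},Z)$, and one deduces that the subgroups $K\cap\ker(\Lambda\to\Out(G/N_m))$ are cofinal among finite-index subgroups of $K$.

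Now the gluing. Let $\Gamma\trianglelefteq\Lambda$ have finite index (passing to the normal core is harmless, a subgroup containing a principal congruence subgroup being congruence). By CSP of $\bar\Lambda\to\Out(\bar G)$ the finite-index subgroup $p(\Gamma)\leq\bar\Lambda$ is congruence, so by the first compatibility $\Gamma K=p^{-1}(p(\Gamma))$ is a congruence subgroup of $\Lambda$; pick a principal congruence subgroup $P\subseteq\Gamma K$. Then $\Gamma\cap P$ is a finite-index subgroup of $P$ contained in $\Gamma$, and $P/(\Gamma\cap P)\hookrightarrow\Gamma K/\Gamma=K/(K\cap\Gamma)$ is abelian. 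It therefore suffices to treat the case where $\Gamma$ is a finite-index subgroup of a principal congruence subgroup $P=\ker(\Lambda\to\Out(\bar Q))$ with $P/\Gamma$ abelian of exponent $e$; then $\Gamma\supseteq[P,P]P^e$, so we only need a finite characteristic quotient $Q'$ of $G$ refining $\bar Q$ with $\ker(\Lambda\to\Out(Q'))\subseteq[P,P]P^e$ — note that $\ker(\Lambda\to\Out(Q'))\subseteq P$ automatically, so this is a statement internal to $P$. The natural candidate for $Q'$ is (a refinement of) the universal mod-$e$ central extension $G/\bigl([\tilde N,G]\,\tilde N^{e}\bigr)$ of $\bar G/\tilde N\cong\bar Q$, where $\tilde N=\ker(G\onto\bar Q)$: the action of $P$ on it factors through a homomorphism into the central kernel $\tilde N/[\tilde N,G]\tilde N^{e}$, and one wants this Johnson-type map to detect all of $P^{\tn{ab}}/eP^{\tn{ab}}$.

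Verifying this last point is the main obstacle: it asks that the congruence topology of $\Lambda$, restricted to a principal congruence subgroup $P$, already be finer than the topology generated by the subgroups $[P,P]P^{e}$ — equivalently, that the relevant Johnson-type homomorphism be injective on $P^{\tn{ab}}/eP^{\tn{ab}}$. The key lemma of the second paragraph is exactly the instance $P=K$ of this, and I would try to obtain the general case by the same mechanism: analysing the action of $G$ on the mod-$e$ central extensions of the quotients $G/\tilde N$ directly in terms of $1$- and $2$-cocycles, and using that the extension class of each congruence quotient $\Lambda/\ker(\Lambda\to\Out(Q'))$ (as an extension of a quotient of $\bar\Lambda$ by a quotient of $K$) is the image of the extension class of $\Lambda$ in $H^{2}(\bar\Lambda;K)$, so that these quotients become as rich as $\Lambda$ itself as $Q'$ grows. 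Once this is in hand, $\ker(\Lambda\to\Out(Q'))\subseteq\Gamma$ for a suitable $Q'$, so $\Gamma$ is a congruence subgroup and the induction closes.
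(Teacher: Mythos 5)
Your overall strategy matches the paper's: induct on nilpotency class, set $K=\ker(\Lambda\to\Out(\bar G))$ with $\bar G = G/\gamma_c G$ (the paper's $\Delta$), observe that $\bar\Lambda\to\Out(\bar G)$ has CSP by induction, and then try to glue that to a statement about $K$. However, there are two genuine gaps in the execution.

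\textbf{The ``key lemma'' is underjustified.} You assert that $K\cap\ker(\Lambda\to\Out(G/N_m))$ lies in the kernel of reduction modulo $N_m\cap Z$ by ``a short computation with inner automorphisms.'' This is the subtle point, and it is not short. If $\varphi\in K$ becomes inner in $\Out(G/N_m)$, it is conjugation by some $gN_m$, but $g$ need not lie in $H_c = \{x : [x,G]\subseteq \gamma_c G\}$, so the discrepancy between your chosen representative $\Phi$ and $\ad_g$ need not vanish on $\bar G$. The Johnson map $\delta$ is only well-defined on $\Out$ after quotienting by the $\bar\delta(\ad_x)$ with $x\in H_c$, and you need a finite quotient in which ``becoming central'' forces $g$ to lie (up to the right error) in $H_c$. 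The paper builds exactly this with two auxiliary lemmas (Lemma \ref{lmm:seeing_bottom}, producing a finite quotient seeing $\gamma_c G / M\gamma_c G$, and Lemma \ref{lmm:carve_centre}, producing a finite quotient with controlled centre) and a nontrivial choice of representative in Step 2 of Proposition \ref{prop:csp_lastterm}. As written, your sketch does not engage with this and would not close without it.

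\textbf{The gluing step reduces to something too strong, and you say so yourself.} After reducing to $\Gamma\leq P$ with $P/\Gamma$ abelian of exponent $e$, you need $\ker(\Lambda\to\Out(Q'))\subseteq [P,P]P^e$ for a single quotient $Q'$ refining $\bar Q$ — i.e. that the congruence topology of $\Lambda$ already separates points in $P^{\tn{ab}}/eP^{\tn{ab}}$ for an \emph{arbitrary} principal congruence subgroup $P$, not just for $K$. You explicitly flag this as ``the main obstacle'' and only sketch a cohomological strategy, which is not carried out. The paper sidesteps the issue entirely with the composition Lemma \ref{lmm:csp_ses}: from the CSP of $K\to\Out(G)$ one gets $(q,Q)$ with $K\cap\ker q_\star\subseteq\Gamma$; from the inductive hypothesis one gets $(r,R)$ with $\ker r_\star\subseteq K\cdot(\Gamma\cap\ker q_\star)$; and the \emph{fibre product} $(s,S)$ of $q$ and $r$ satisfies $\ker s_\star\subseteq\Gamma$ by a two-line factorisation argument. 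No finer separation on $P$ is required. If you replace your ``single $Q'$'' gluing with this fibre-product argument, the second gap closes; the first gap still requires proving the analogue of the paper's Lemmata \ref{lmm:seeing_bottom} and \ref{lmm:carve_centre}.
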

This result (up to torsion) was first obtained by Ben-Ezra and Lubotzky \cite[Theorem 1.1]{csp_fg_nilp_gps_BenEzraLubotzky22}, basing on the theory of Malcev completion and unipotent groups and knowledge of CSP for solvable arithmetic groups. In Section \ref{ssec:pf_csp_nilpotent} we give a self-contained proof of Theorem \ref{thm:csp_nilpotent} involving only induction and elementary algebra.

\begin{remark}
Metabelian groups behave rather differently in terms of CSP \cite{csp_low_rk_free_free_metabelian_gps_BenEzraLubotzky18}.
\end{remark}

As an application of Theorem \ref{thm:csp_nilpotent}, we relate CSP to the Johnson filtration $\Torr[k]$ (see Definition \ref{def:AJ_filtration}).
\begin{corollary}\label{cor:congruence_Johnson}
If $\Gamma\leq\Mcg(\Sigma)$ is a finite-index subgroup containing the Johnson subgroup $\Torr[k]$ for some $k$, then $\Gamma$ is a congruence subgroup.
\end{corollary}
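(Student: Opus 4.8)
\emph{Proof proposal.} The plan is to descend to the $k$-th nilpotent quotient of $\pi_1\Sigma$ and invoke Theorem \ref{thm:csp_nilpotent}. Write $\gamma_\bullet$ for the lower central series and put $N\defeq\pi_1\Sigma/\gamma_{k+1}(\pi_1\Sigma)$, a finitely generated nilpotent group. Since $\gamma_{k+1}(\pi_1\Sigma)$ is characteristic, the natural map $\rho\colon\Mcg(\Sigma)\to\Out(\pi_1\Sigma)$ composes with the induced surjection $\Out(\pi_1\Sigma)\onto\Out(N)$, and by the definition of the Johnson filtration the kernel of the composite is exactly $\Torr[k]$. Hence $\lambda\colon\Lambda\defeq\Mcg(\Sigma)/\Torr[k]\hookrightarrow\Out(N)$ is injective and $\bar\Gamma\defeq\Gamma/\Torr[k]$ is a finite-index subgroup of $\Lambda$. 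As $\ker\lambda$ is trivial, it will be enough to show that $\lambda$ has CSP: then $\bar\Gamma$, being an arbitrary finite-index subgroup of $\Lambda$, is a congruence subgroup for the $\Lambda$-action on $N$, and it only remains to transport this description back up to $\pi_1\Sigma$.

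To deduce CSP for $\lambda$ from Theorem \ref{thm:csp_nilpotent} I need the abelianisation action $\Lambda\to\Out(N_\tn{ab})$ to have CSP. But $N_\tn{ab}=(\pi_1\Sigma)_\tn{ab}=H_1(\Sigma;\mathbb Z)$, so this is the homological representation of the Mapping Class Group, with kernel $\Torr/\Torr[k]$. A finite-index subgroup of $\Lambda$ containing that kernel is the preimage of a finite-index subgroup of the image of $\Mcg(\Sigma)$ in $\Out(H_1(\Sigma;\mathbb Z))$, which is an arithmetic group --- the integral symplectic group $\tn{Sp}_{2g}\mathbb Z$ for a closed genus-$g$ surface, and an analogous lattice when $\Sigma$ has boundary or punctures. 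For such a group CSP is classical (Bass--Milnor--Serre for $\tn{Sp}_{2g}\mathbb Z$ with $g\geq 2$), and the principal congruence subgroup $\ker(\tn{Sp}_{2g}\mathbb Z\to\tn{Sp}_{2g}(\mathbb Z/n))$ coincides with the principal congruence subgroup of Definition \ref{def:csp} attached to the characteristic quotient $\mathbb Z^{2g}\onto(\mathbb Z/n)^{2g}$ of $H_1(\Sigma;\mathbb Z)$; since every finite-index subgroup contains such a subgroup, it is a congruence subgroup, and so $\Lambda\to\Out(H_1(\Sigma;\mathbb Z))$ has CSP. I expect this reduction --- the bookkeeping identifying homological CSP with the classical congruence subgroup property of the relevant arithmetic group --- to be the one genuinely delicate step, and the only place where any hypothesis on $\Sigma$ is used.

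Granting that, Theorem \ref{thm:csp_nilpotent} will yield CSP for $\lambda\colon\Lambda\to\Out(N)$, so $\bar\Gamma=(q_\star\circ\lambda)^{-1}(H)$ for some finite characteristic quotient $q\colon N\onto Q$ and some $H\leq\Out(Q)$. To finish I would lift $q$: the kernel of $\pi_1\Sigma\onto N\onto Q$ contains $\gamma_{k+1}(\pi_1\Sigma)$ and is characteristic in $\pi_1\Sigma$, because any automorphism of $\pi_1\Sigma$ descends to $N$ (as $\gamma_{k+1}$ is characteristic) and preserves there the characteristic subgroup $\ker(N\onto Q)$. Thus $Q$ is also a finite characteristic quotient of $\pi_1\Sigma$, and the induced map $\Out(\pi_1\Sigma)\to\Out(Q)$ factors as $\Out(\pi_1\Sigma)\onto\Out(N)\xrightarrow{q_\star}\Out(Q)$; consequently the composite $\Mcg(\Sigma)\xrightarrow{\rho}\Out(\pi_1\Sigma)\to\Out(Q)$ equals $q_\star\circ\lambda$ precomposed with the projection $\Mcg(\Sigma)\onto\Lambda$. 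Therefore the preimage of $H$ under this composite is the preimage of $\bar\Gamma$ in $\Mcg(\Sigma)$, which --- because $\Torr[k]\leq\Gamma$ --- is precisely $\Gamma$; hence $\Gamma$ is a congruence subgroup. (If $\Sigma$ has non-empty boundary one runs the identical argument, working with $\Aut$ of the nilpotent quotient in place of $\Out$.)
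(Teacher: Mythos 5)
Your proposal is essentially the proof the paper intends: the corollary is stated as an application of Theorem \ref{thm:csp_nilpotent} without further argument, and the natural route---descend to the class-$k$ nilpotent quotient $N=\pi_1\Sigma/\gamma_{k+1}\pi_1\Sigma$, note that $\Mcg(\Sigma)/\Torr[k]$ injects into $\Out(N)$ by the very definition of the Johnson filtration, feed the homological CSP into Theorem \ref{thm:csp_nilpotent}, and then lift the resulting characteristic quotient of $N$ to a characteristic quotient of $\pi_1\Sigma$---is exactly the right one. Your final lifting step (that $\ker(\pi_1\Sigma\onto N\onto Q)$ is characteristic in $\pi_1\Sigma$ and the induced maps on outer automorphism groups compose as needed) is carried out correctly.

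Two things deserve comment. First, the input you need, namely CSP of $\Mcg(\Sigma)\to\Out(H_1(\Sigma;\mathbb{Z}))$, genuinely fails in one case: for the closed torus $\pi_1\Sigma=\mathbb{Z}^2$ is abelian, so $\Torr[k]=\{1\}$ for every $k$, the containment hypothesis is vacuous, and the conclusion would be CSP of $\tn{SL}_2\mathbb{Z}\curvearrowright\mathbb{Z}^2$---which the paper explicitly notes does not hold. So the corollary, and your proof, implicitly carry the standing assumption that the homological action has CSP, which excludes the closed torus. This is a gap in the corollary's statement rather than in your argument, but you should make the hypothesis explicit. Second, you correctly flag the identification of the homological CSP with a classical congruence subgroup property as the delicate step, but you only settle it for closed surfaces (where the image is all of $\tn{Sp}_{2g}\mathbb{Z}$ and Bass--Milnor--Serre applies for $g\geq 2$). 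For punctured surfaces the image of $\Mcg(\Sigma)$ in $\Gl(H_1(\Sigma;\mathbb{Z}))$ is a proper subgroup---an extension of a finite permutation group and a symplectic group by a unipotent part coming from the puncture classes---and CSP for that group does not follow by citation alone. One clean way to finish that step would be to iterate Lemma \ref{lmm:csp_ses} along the $\Mcg$-invariant filtration of $H_1$ by the puncture subgroup, using Bass--Milnor--Serre on the symplectic quotient and handling the abelian/unipotent kernel directly, in the same spirit as Lemma \ref{lmm:csp_multitwist}. With that filled in, your argument is complete.
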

It was proved by Leiniger and McReynolds that solvable subgroups of Mapping Class Groups are separable \cite[Theorem 1.1]{separable_subgps_mcgs_LeinigerMcReynolds07}. More careful examination of their proof reveals that the quotients they produce actually annihilate some term of the Johnson filtration. In this situation Corollary \ref{cor:congruence_Johnson} applies, and upgrades separability to congruence separability.
\begin{corollary}
Solvable subgroups of $\Mcg(\Sigma)$ are congruence separable. Explicitly, for any solvable $\Gamma\leq\Mcg(\Sigma)$ and a mapping class $\varphi\notin\Gamma$ there exists a congruence subgroup $\Delta\leq\Mcg(\Sigma)$ such that $\Gamma\subseteq\Delta$ and $\varphi\notin\Delta$.
\end{corollary}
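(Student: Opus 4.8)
The plan is to deduce this from Corollary \ref{cor:congruence_Johnson} together with a sharper reading of the proof of Leininger and McReynolds \cite{separable_subgps_mcgs_LeinigerMcReynolds07}. The statement to extract from their argument is the following quantitative form of separability: given a solvable $\Gamma\leq\Mcg(\Sigma)$ and $\varphi\notin\Gamma$, there is a finite quotient $\rho\colon\Mcg(\Sigma)\onto F$ with $\rho(\varphi)\notin\rho(\Gamma)$ and, moreover, $\Torr[k]\leq\ker\rho$ for some $k$. Granting this, set $\Delta\defeq\rho^{-1}(\rho(\Gamma))$. Since $F$ is finite, $\Delta$ has finite index in $\Mcg(\Sigma)$; by construction $\Gamma\subseteq\Delta$, $\varphi\notin\Delta$, and $\Torr[k]\leq\ker\rho\leq\Delta$. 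Corollary \ref{cor:congruence_Johnson} then applies to $\Delta$ and shows it is a congruence subgroup, which is exactly the assertion.

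It remains to revisit \cite{separable_subgps_mcgs_LeinigerMcReynolds07} and verify the refinement. First, a solvable subgroup of $\Mcg(\Sigma)$ is virtually abelian (Birman--Lubotzky--McCarthy), and a routine coset argument reduces the problem to separating a finitely generated abelian subgroup from a mapping class; up to finite index such a subgroup splits as a product of a multitwist group along a multicurve and cyclic pseudo-Anosov pieces supported on the complementary subsurfaces. The finite quotients produced in \cite{separable_subgps_mcgs_LeinigerMcReynolds07} to detect non-membership are all subordinate to the action of $\Mcg(\Sigma)$ on a finite characteristic nilpotent quotient of $\pi_1\Sigma$: twists about non-separating curves are already seen on $H_1(\Sigma;\mathbb{Z}/n)$, twists about separating curves and more degenerate configurations become visible on higher nilpotent quotients through the Johnson homomorphisms, and the pseudo-Anosov factors are handled by restricting to their supporting subsurfaces and inducting on complexity. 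Since the subgroup acting trivially on the $k$-th nilpotent quotient of $\pi_1\Sigma$ is exactly $\Torr[k]$ (Definition \ref{def:AJ_filtration}), each such detecting quotient factors through $\Mcg(\Sigma)/\Torr[k]$, and one may assemble the finitely many quotients arising from the coset reduction into a single $\rho$, taking $k$ to be the largest index that occurs. This gives the refinement.

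The only genuine work is this bookkeeping --- running through the induction of \cite{separable_subgps_mcgs_LeinigerMcReynolds07} and confirming at each stage that the relevant separating quotient may be taken to factor through the action on a single nilpotent quotient of $\pi_1\Sigma$, in particular that the pseudo-Anosov-on-a-subsurface step never forces a quotient that fails to kill a term of the Johnson filtration. The reduction to the abelian case, the coset manipulations, the assembly into one $\rho$, and the closing appeal to Corollary \ref{cor:congruence_Johnson} are all formal.
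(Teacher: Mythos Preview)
Your proposal is correct and follows essentially the same approach as the paper: the paper does not give a formal proof but simply notes that a careful reading of \cite{separable_subgps_mcgs_LeinigerMcReynolds07} shows their separating quotients annihilate some term of the Johnson filtration, whereupon Corollary \ref{cor:congruence_Johnson} upgrades separability to congruence separability. Your write-up supplies more detail on how the Leininger--McReynolds argument is organised and where one must check that each detecting quotient factors through some $\Mcg(\Sigma)/\Torr[k]$, but the overall strategy is identical.
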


\subsection{Subsurface subgroups of MCGs}

The second result of this article asserts that CSP behaves well under inclusion of subsurfaces.
\begin{theorem}\label{thm:csp_subsurface}
Let $\Sigma$ be a possibly punctured surface with no boundary, and let $\Sigma=\bigcup_i\Sigma_i$ be its decomposition into disjoint subsurfaces glued along boundaries. Suppose that $\Lambda_i\leq\Mcg(\Sigma_i)$ are subgroups supported on the subsurfaces, and let $\Lambda=\prod_i\Lambda_i$ be the subgroup of $\Mcg(\Sigma)$ obtained from them by inclusion. If all $\Lambda_i\to\Out(\pi_1\Sigma_i)$ have the CSP, then $\Lambda\to\Out(\pi_1\Sigma)$ has the CSP.
\end{theorem}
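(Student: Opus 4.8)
Here is a proposed line of attack.

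\medskip

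The plan is to lift the homeomorphisms representing elements of $\Lambda$ to one carefully chosen characteristic cover of $\Sigma$ whose restriction over each piece $\Sigma_i$ refines a prescribed cover of $\Sigma_i$, exploiting that the gluing curves can be kept disjoint from all the supports. Since $\partial\Sigma=\varnothing$, the Dehn--Nielsen--Baer map $\Mcg(\Sigma)\hookrightarrow\Out(\pi_1\Sigma)$ is injective, so $\ker(\Lambda\to\Out(\pi_1\Sigma))$ is trivial, and by Definition~\ref{def:csp} (together with the observation, recorded just after it, that any subgroup containing a principal congruence subgroup is itself congruence) it suffices to show that every finite-index $\Delta\le\Lambda$ contains a principal congruence subgroup $\Lambda[M]:=(q_\star\circ\lambda)^{-1}(1)$ for some finite-index characteristic $M\trianglelefteq\pi_1\Sigma$, where $q\colon\pi_1\Sigma\onto\pi_1\Sigma/M$. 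Given $\Delta$, set $\Delta_i:=\Delta\cap\Lambda_i$; then $\prod_i\Delta_i\le\Delta$ has finite index, so I may assume $\Delta=\prod_i\Delta_i$. Since $\ker\lambda_i\trianglelefteq\Lambda_i$ (in fact it is central), $\Delta_i^{+}:=\Delta_i\cdot\ker\lambda_i$ is a finite-index subgroup of $\Lambda_i$ containing $\ker\lambda_i$, so the hypothesis that $\lambda_i$ has the CSP yields a finite characteristic $N_i\trianglelefteq\pi_1\Sigma_i$ with $\Lambda_i[N_i]\le\Delta_i^{+}$ (here $\Lambda_i[N_i]$ is the principal congruence subgroup of $\Lambda_i$ at level $N_i$, defined as for $\Lambda$); after shrinking $N_i$ (keeping it characteristic) I may further assume every boundary curve of $\Sigma_i$ has order at least $D$ in $\pi_1\Sigma_i/N_i$, for a large integer $D$ to be fixed at the very end.

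Next I construct $M$. Surface groups are LERF and each $\pi_1\Sigma_i$ embeds as a finitely generated subgroup of $\pi_1\Sigma$, so there is a finite-index normal $L_i\trianglelefteq\pi_1\Sigma$ with $L_i\cap\pi_1\Sigma_i\le N_i$; by residual finiteness there is also a finite-index $R\trianglelefteq\pi_1\Sigma$ in which every gluing curve has order at least $D$. Letting $M$ be the intersection of the finitely many $\Aut(\pi_1\Sigma)$-translates of $R\cap\bigcap_i L_i$ gives a finite-index \emph{characteristic} $M\trianglelefteq\pi_1\Sigma$ with $M\cap\pi_1\Sigma_i\le N_i$ for every $i$ and with each gluing curve of order at least $D$ in $Q:=\pi_1\Sigma/M$.

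The heart of the argument is the inclusion $\Lambda[M]\le\Delta$. Fix $\varphi\in\Lambda[M]$, write $\varphi=\prod_i\varphi_i$ with $\varphi_i\in\Lambda_i$, realise each $\varphi_i$ by a homeomorphism $f_i$ supported in $\operatorname{int}\Sigma_i$, and put $f=\prod_i f_i$, so $f$ represents $\varphi$ and is the identity on a regular neighbourhood $N(C)$ of the union $C$ of the gluing curves. Let $p\colon\tilde\Sigma\to\Sigma$ be the cover with $\pi_1\tilde\Sigma$ corresponding to $M$ and deck group $Q$, and choose the basepoint $\ast$ in $C$. As $M$ is characteristic, $f$ lifts; since $\varphi\in\Lambda[M]$ (and $f(\ast)=\ast$), it lifts to $\tilde f$ fixing the fibre $p^{-1}(\ast)$ pointwise --- the standard covering-space description of principal congruence subgroups. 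The key point is a propagation of this fact: because $f$ is the identity on the connected set $N(C)\ni\ast$, lifting paths that stay in $N(C)$ shows that $\tilde f$ fixes $p^{-1}(N(C))$ pointwise. Hence, for each $i$, $\tilde f$ preserves the component $\hat\Sigma_i$ of $p^{-1}(\Sigma_i)$ containing a chosen lift $\tilde\ast_i$ of a point $\ast_i\in\partial\Sigma_i\subset C$, and $\tilde f|_{\hat\Sigma_i}$ is a lift of $\varphi_i$ that is the identity near $\partial\hat\Sigma_i$ and fixes the fibre over $\ast_i$ pointwise. Since $M\cap\pi_1\Sigma_i\le N_i$, projecting $\hat\Sigma_i$ onto the $N_i$-cover of $\Sigma_i$ produces a lift of $\varphi_i$ there that fixes the fibre over $\ast_i$, i.e. $\varphi_i\in\Lambda_i[N_i]\le\Delta_i^{+}$.

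It remains to upgrade $\varphi_i\in\Delta_i^{+}=\Delta_i\cdot\ker\lambda_i$ to $\varphi_i\in\Delta_i$. Writing $\varphi_i=\delta_i z_i$ with $\delta_i\in\Delta_i$ and $z_i\in\ker\lambda_i$, the obstruction is the class of $z_i$ in the finite abelian group $A_i:=\ker\lambda_i/(\ker\lambda_i\cap\Delta_i)$; the element $z_i$ is a product of Dehn twists about boundary curves of $\Sigma_i$, which inside $\Mcg(\Sigma)$ are Dehn twists about the gluing curves. One reads these twisting exponents off the ambient congruence data: the equality $\tilde f|_{p^{-1}(N(C))}=\operatorname{id}$ rigidifies $\tilde f$ along each circle of $p^{-1}(c_e)$ (covering $c_e$ with degree at least $D$), and comparing how $\varphi$ acts on an arc crossing $c_e$ with the conjugation by which $\varphi$ acts on $Q$ recovers the total twisting exponent at $c_e$ modulo a divisor of $D$; taking $D$ large enough then forces $z_i\in\ker\lambda_i\cap\Delta_i$, so $\varphi_i\in\Delta_i$ and $\varphi\in\Delta$. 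The construction of $M$ and the lifting/propagation step are clean; I expect the genuine difficulty to lie in this last step --- isolating the twisting contribution of the $z_i$ from that of the $\delta_i$ and reading it off the ambient congruence structure --- together with the bookkeeping of conventions (basepoints and peripheral data, and the precise sense in which $\Lambda=\prod_i\Lambda_i$, since a Dehn twist about a gluing curve lies in the images of the two adjacent pieces at once). A plausible way to tame it is to induct on the number of gluing curves, splitting them off one at a time so that only a single twist direction must be controlled at each stage.
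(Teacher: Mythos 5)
Your overall strategy shares the paper's basic idea (build a finite characteristic quotient $Q$ of $\pi_1\Sigma$ that refines prescribed quotients of the pieces and has gluing curves of large order), but the execution contains several genuine gaps, and the paper's proof is organised quite differently to avoid exactly these pitfalls.

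First, the claim that ``$\varphi\in\Lambda[M]$ implies $f$ lifts to $\tilde f$ fixing the fibre $p^{-1}(\ast)$ pointwise'' conflates the $\Aut$ and $\Out$ notions of a principal congruence subgroup (Definition~\ref{def:csp} works in $\Out$). Triviality of $\varphi$ in $\Out(Q)$ only says that $f_\ast$ acts on $Q$ by an inner automorphism $\ad_g$; the lift fixing $\tilde\ast$ then permutes the fibre by $q\mapsto gqg^{-1}$, which is not the identity unless $g$ is central. Correcting by an inner automorphism of $\pi_1\Sigma$ destroys the ``identity on $N(C)$'' condition. The paper addresses exactly this issue in the proof of Proposition~\ref{prop:subsurface_containment} by passing to the $d$-th power of the offending element (so $g^d$ lies in the subgroup $\widehat Q$), pushing through the virtual retraction of Lemma~\ref{lmm:virt_retraction}, and choosing $d$ coprime to $|\Out(R)|$ to recover triviality of $\phi$ itself.

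Second, the ``propagation'' step is broken. You write ``because $f$ is the identity on the connected set $N(C)\ni\ast$'', but $C$ is a disjoint union of circles and so $N(C)$ is a disjoint union of annuli, not connected. Even for a single gluing curve, the path-lifting argument only shows $\tilde f$ is the identity on the component of $p^{-1}(N(C))$ containing $\tilde\ast$; other components are merely permuted, so pieces $\Sigma_i$ that do not abut that particular annulus are not reached. As written, the step does not yield $\varphi_i\in\Lambda_i[N_i]$ for every $i$.

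Third, and most substantively, the last step --- extracting the boundary-twist exponents from the congruence data to upgrade $\varphi_i\in\Delta_i^{+}$ to $\varphi_i\in\Delta_i$ --- is the genuinely hard part, and you acknowledge it is not worked out. The paper circumvents this entirely: using the composition Lemma~\ref{lmm:csp_ses}, it first deals with the multitwist subgroup $\Delta=\Delta'\cap\Lambda$ via Lemma~\ref{lmm:csp_multitwist} (itself invoking Theorem~\ref{thm:csp_nilpotent}), and then only needs to handle finite-index $\Gamma$ that already contain $\Delta$. Once $\Gamma\supseteq\Delta$, the boundary-twist identifications cause no trouble and the argument closes via Proposition~\ref{prop:subsurface_containment}. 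Also note a smaller point: LERF alone does not give a finite-index normal $L_i\trianglelefteq\pi_1\Sigma$ with $L_i\cap\pi_1\Sigma_i\le N_i$; what you need is that $\pi_1\Sigma_i$ is a virtual retract of $\pi_1\Sigma$ (so the induced profinite topology on $\pi_1\Sigma_i$ is its full profinite topology), which is precisely the content of Lemma~\ref{lmm:virt_retraction}.
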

Theorem \ref{thm:csp_subsurface} is proven in Section \ref{ssec:pf_csp_subsurface}.

As a first application of Theorem \ref{thm:csp_subsurface}, we extend the main result of the excellent survey of McReynolds \cite{congruence_subgroup_problem_for_pure_braid_groups_McReynolds10} from point-pushing to handle-pushing subgroups (see Section \ref{sec:handlepushing} for definition).
\begin{corollary}\label{corr:csp_handlepush}
Handle-pushing subgroups of Mapping Class Groups have the congruence subgroup property.
\end{corollary}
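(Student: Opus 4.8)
The plan is to exhibit a handle-pushing subgroup as a subsurface subgroup in the sense of Theorem~\ref{thm:csp_subsurface}, with one factor the trivial group and the other a boundary-pushing subgroup, and then to invoke McReynolds' theorem on point-pushing subgroups for the latter.

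Fix the data defining the handle-pushing subgroup: a surface $\Sigma$ of genus $g$ with $n$ punctures and no boundary, an embedded handle $T\cong\Sigma_{1,1}$, and the associated handle-pushing subgroup $\mathcal{H}\le\Mcg(\Sigma)$. Set $R\defeq\overline{\Sigma\setminus T}$, a surface of genus $g-1$ with $n$ punctures and one boundary circle $c=\partial T=\partial R$, so that $\Sigma=R\cup T$ is a decomposition of the type to which Theorem~\ref{thm:csp_subsurface} applies. The first step --- which I expect to be the only delicate one --- is to read off from the definition in Section~\ref{sec:handlepushing} that pushing $T$ once around a loop $\gamma$ in $R$ transports the handle back to its original position without rotating it internally; consequently, up to isotopy it is supported on $R$ and agrees there with the boundary-push of $c$ around $\gamma$. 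Hence $\mathcal{H}=\iota(\widetilde{\mathcal{P}})$, where $\iota\colon\Mcg(R)\hookrightarrow\Mcg(\Sigma)$ is the subsurface inclusion (injective, as $c$ bounds neither a disc nor a once-punctured disc on the $T$-side) and $\widetilde{\mathcal{P}}\le\Mcg(R)$ is the subgroup generated by the boundary-pushes of $c$. Taking $\Lambda_R\defeq\widetilde{\mathcal{P}}$ and $\Lambda_T\defeq 1$ realises $\mathcal{H}$ as the subgroup $\Lambda=\Lambda_R\times\Lambda_T$ of Theorem~\ref{thm:csp_subsurface}; if instead one lets the handle carry its own mapping classes one takes $\Lambda_T\defeq\Mcg(\Sigma_{1,1})$, which has the CSP by Asada \cite{faithfulness_monodromy_reps_certain_alg_curves_Asada01}, and the argument below covers the enlarged subgroup verbatim.

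It then remains to check the hypotheses of Theorem~\ref{thm:csp_subsurface} factor by factor. The homomorphism $1\to\Out(\pi_1 T)$ trivially has the CSP. For $\Lambda_R$, cap $c$ off with a once-punctured disc to obtain a surface $\Sigma_0$ of genus $g-1$ with $n+1$ punctures; the capping homomorphism $\Mcg(R)\to\Mcg(\Sigma_0)$ has central kernel $\langle T_c\rangle$ and carries $\widetilde{\mathcal{P}}$ onto the point-pushing subgroup $\mathcal{P}\le\Mcg(\Sigma_0)$ of the new puncture. Since $T_c$ acts on $\pi_1 R$ as conjugation by the peripheral element $[c]$, the kernel of $\widetilde{\mathcal{P}}\twoheadrightarrow\mathcal{P}$ lies in the kernel of $\widetilde{\mathcal{P}}\to\Out(\pi_1 R)$, so this last map factors as $\widetilde{\mathcal{P}}\twoheadrightarrow\mathcal{P}\to\Out(\pi_1 R)\cong\Out(\pi_1\Sigma_0)$, using that $\pi_1 R\to\pi_1\Sigma_0$ is an isomorphism. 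An elementary observation --- across a central quotient contained in the kernel of a map $\Lambda'\to\Out(Q)$, the finite-index subgroups containing the kernel, and the congruence subgroups, correspond bijectively --- then shows that $\widetilde{\mathcal{P}}\to\Out(\pi_1 R)$ has the CSP precisely when $\mathcal{P}\to\Out(\pi_1\Sigma_0)$ does; and the latter is the assertion that point-pushing subgroups of Mapping Class Groups have the CSP, the main result of \cite{congruence_subgroup_problem_for_pure_braid_groups_McReynolds10} (in low genus also following from \cite{every_curve_Hurwitz_space_DiazDonagiHarbater89,faithfulness_monodromy_reps_certain_alg_curves_Asada01}). With both factors verified, Theorem~\ref{thm:csp_subsurface} applied to $\Sigma=R\cup T$ yields that the natural map $\mathcal{H}\to\Out(\pi_1\Sigma)$ has the CSP, as required. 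The main obstacle, as flagged, is the opening topological identification of $\mathcal{H}$ with $\iota(\widetilde{\mathcal{P}})$; the rest is the central-quotient bookkeeping together with the cited inputs.
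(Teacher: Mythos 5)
Your overall strategy is the right one and matches the paper's: reduce to Theorem~\ref{thm:csp_subsurface}, and invoke McReynolds' theorem on point-pushing subgroups for the factors, handling the central boundary twists via capping with once-punctured discs (the paper makes exactly this observation before Proposition~\ref{prop:subsurface_containment}, and your ``central-quotient bookkeeping'' is correct). The paper's proof is literally two sentences long: point-pushing subgroups of each piece $\Sigma_i$ have CSP by McReynolds, and Theorem~\ref{thm:csp_subsurface} applies.

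There is, however, a genuine gap coming from a definitional mismatch. The paper's definition cuts $\Sigma$ along an \emph{arbitrary} multicurve $B$ into pieces $\Sigma_i$ and defines the handle-pushing subgroup as generated by the inclusions of the \emph{full} point-pushing subgroups of all the pieces. Your proposal restricts to a single separating curve cutting off one genus-one handle, and --- more seriously --- identifies the handle-pushing subgroup with $\iota(\widetilde{\mathcal{P}})$, where $\widetilde{\mathcal{P}}\le\Mcg(R)$ pushes only the boundary circle $c$ around loops in $R$. Under the paper's definition, $\Lambda_R$ must be the full point-pushing subgroup of $R$, which also pushes the original punctures of $\Sigma$ lying in $R$; and $\Lambda_T$ must be the point-pushing subgroup of $T$ (the kernel of $\Mcg(\Sigma_{1,1})\to\Mcg(\Sigma_1)$, which is nontrivial), not the trivial group nor all of $\Mcg(\Sigma_{1,1})$. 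So the subgroup you analyse is a proper subgroup of the paper's handle-pushing subgroup, and your alternative choice $\Lambda_T=\Mcg(\Sigma_{1,1})$ overshoots it. Relatedly, your flagged ``delicate first step'' is an artefact of your informal picture of ``pushing a handle''; under the paper's definition there is nothing to verify, since the handle-pushing subgroup is \emph{defined} to be generated by inclusions of point-pushing classes. The repair is straightforward: take $\Lambda_i$ to be the point-pushing subgroup of $\Sigma_i$ for every piece $i$ of an arbitrary decomposition, apply McReynolds (via the capping observation you already made) to each $\Lambda_i\to\Out(\pi_1\Sigma_i)$, and invoke Theorem~\ref{thm:csp_subsurface}; this is what the paper does.
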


As a second application, using the work of Boggi \cite{csp_for_hyperelliptic_modular_gp_Boggi09}, we deduce the CSP of stabilisers of large simplices in the curve complex.
\begin{corollary}
Let $\mathcal{B}$ be a tuple of disjoint essential curves, such that every component of $\Sigma\setminus\bigcup \mathcal{B}$ has genus at most 2. Then $\Stab_{\Mcg(\Sigma)}(\mathcal{B})\to\Out(\pi_1\Sigma)$ has CSP.
\end{corollary}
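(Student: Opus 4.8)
The plan is to cut $\Sigma$ along $\mathcal B$, realise a finite-index subgroup of $\Stab_{\Mcg(\Sigma)}(\mathcal B)$ as a product of subsurface mapping class groups, and apply Theorem~\ref{thm:csp_subsurface}, feeding in the known low-genus cases of CSP for the pieces. Write $\Sigma=\bigcup_j\Sigma_j$ for the decomposition whose pieces $\Sigma_j$ are the closures of the components of $\Sigma\setminus\bigcup\mathcal B$, glued to one another along the curves of $\mathcal B$, which become boundary circles of the $\Sigma_j$; this is exactly a decomposition of the kind in Theorem~\ref{thm:csp_subsurface}. Taking $\Lambda_j=\Mcg(\Sigma_j)$ --- the mapping class group relative to the boundary, included in $\Mcg(\Sigma)$ by extension by the identity --- let $\Lambda\defeq\prod_j\Lambda_j\leq\Mcg(\Sigma)$. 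Since a Dehn twist about a curve of $\mathcal B$ is a boundary twist of an adjacent piece, $\Lambda$ contains all of these, and by the standard description of multicurve stabilisers $\Lambda$ is the ``pure'' stabiliser of $\mathcal B$ (the classes fixing each curve of $\mathcal B$ with its orientation and coorientation and fixing each complementary piece); in particular it is a finite-index normal subgroup of $\Stab_{\Mcg(\Sigma)}(\mathcal B)$, the quotient being a finite group of combinatorial symmetries of $(\Sigma,\mathcal B)$.

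Next I would verify the hypotheses of Theorem~\ref{thm:csp_subsurface}, namely that each $\Lambda_j\to\Out(\pi_1\Sigma_j)$ has CSP. As $\Sigma_j$ has nonempty boundary, $\pi_1\Sigma_j$ is free, and its boundary Dehn twists act on it by inner automorphisms; these twists span a central subgroup of $\Mcg(\Sigma_j)$ contained in the kernel of the map to $\Out(\pi_1\Sigma_j)$, and the quotient is $\Mcg(\Sigma_j^\circ)$, where $\Sigma_j^\circ$ is obtained from $\Sigma_j$ by turning each boundary circle into a puncture and satisfies $\pi_1\Sigma_j^\circ=\pi_1\Sigma_j$. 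Because CSP passes from a group to any central extension of it whose centre lies in the kernel, it suffices that $\Mcg(\Sigma_j^\circ)\to\Out(\pi_1\Sigma_j^\circ)$ has CSP; and $\Sigma_j^\circ$ is a punctured surface of genus at most $2$, so this is the theorem of Diaz-Donagi-Harbater \cite{every_curve_Hurwitz_space_DiazDonagiHarbater89} in genus $0$, of Asada \cite{faithfulness_monodromy_reps_certain_alg_curves_Asada01} in genus $1$, and of Boggi \cite{csp_for_hyperelliptic_modular_gp_Boggi09} in genus $2$. Theorem~\ref{thm:csp_subsurface} then yields that $\Lambda\to\Out(\pi_1\Sigma)$ has CSP.

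The remaining, and I expect hardest, step is to transfer CSP from the finite-index subgroup $\Lambda$ up to all of $\Stab_{\Mcg(\Sigma)}(\mathcal B)$. For this I would use the elementary fact that if $\lambda\colon\Lambda'\to\Out(G)$ is injective, $\Lambda_0\leq\Lambda'$ has finite index, $\lambda|_{\Lambda_0}$ has CSP, and $\Lambda_0$ is a congruence subgroup of $\Lambda'$ (equivalently, contains a principal congruence subgroup), then $\lambda$ has CSP: given a finite-index $H\leq\Lambda'$, the congruence subgroup $H\cap\Lambda_0$ of $\Lambda_0$ contains a principal one, associated to some finite characteristic quotient of $G$, as does $\Lambda_0$ inside $\Lambda'$, and a common finite characteristic quotient refining these two produces a principal congruence subgroup of $\Lambda'$ lying inside $H$. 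So one has to show that the pure stabiliser $\Lambda$ is a congruence subgroup of $\Stab_{\Mcg(\Sigma)}(\mathcal B)$, i.e.\ exhibit a finite characteristic quotient $\pi_1\Sigma\onto Q$ such that every class in $\Stab_{\Mcg(\Sigma)}(\mathcal B)$ acting trivially on $Q$ is automatically pure. This should follow from residual properties of $\pi_1\Sigma$ --- conjugacy separability, and separability of the finitely generated subgroups carried by the complementary pieces --- which allow one to detect, in a suitable finite quotient, the orientation and coorientation of each curve of $\mathcal B$ and the partition of $\Sigma$ into pieces; the genuine content is to verify that the finite symmetry group $\Stab_{\Mcg(\Sigma)}(\mathcal B)/\Lambda$ is visible in the congruence topology. (If $\Stab_{\Mcg(\Sigma)}(\mathcal B)$ is intended to mean the pure stabiliser from the outset, this step is vacuous.) Finally, some routine bookkeeping is needed for the cutting identification of $\Lambda$ and for degenerate complementary pieces --- all of genus $0$, hence covered by the first case above --- and one should check that the cited genus-$\leq 2$ results are available with arbitrarily many punctures.
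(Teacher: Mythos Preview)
Your approach is correct and is exactly the deduction the paper intends (no proof is written out there): cut along $\mathcal B$, invoke the genus-$\leq 2$ CSP results for the punctured pieces to feed Theorem~\ref{thm:csp_subsurface}, and then promote from the pure stabiliser $\Lambda$ to $\Stab_{\Mcg(\Sigma)}(\mathcal B)$ via Lemma~\ref{lmm:csp_ses}. The step you flag as hardest is lighter than you fear: an element of $\Stab(\mathcal B)\setminus\Lambda$ must send some oriented $\beta_i$ to a different $\beta_j^{\pm 1}$, and since surface groups are conjugacy separable and no nontrivial element of $\pi_1\Sigma$ is conjugate to its inverse, a single finite characteristic quotient separating the finitely many conjugacy classes $[\beta_i^{\pm 1}]$ already witnesses $\Lambda$ as a congruence subgroup of $\Stab(\mathcal B)$.
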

Recall that there are finitely many orbits of simplices in the curve complex under the action of the Mapping Class Group (any collections of curves, whose complement has the same topological type, are related by a mapping class -- see \cite[Section 1.3]{primer_on_mapping_class_groups_FarbMargalit12}). Algebraically this means that there are only finitely many conjugacy classes of handle-pushing subgroups in the Mapping Class Group. This observation allows us to deduce the following:
\begin{corollary}
Let $\Gamma\leq\Mcg(\Sigma)$ be a finite-index subgroup. There exists a finite characteristic quotient $q\colon\pi_1\Sigma\to Q$ with the following property. For every collection $\mathcal{B}$ of simple closed curves, such that all components of the complement of $\mathcal{B}$ have genus at most 2, we have $\left(\ker q_\star\right)\cap\tn{Stab}_{\Mcg(\Sigma)}(\mathcal{B})\subseteq\Gamma$.
\end{corollary}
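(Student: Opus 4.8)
The plan is to combine the previous corollary with two soft facts: that there are only finitely many $\Mcg(\Sigma)$-orbits of the curve systems in question, and that finitely many principal congruence subgroups always admit a common principal refinement.

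First I would replace $\Gamma$ by its normal core $\Gamma_0=\bigcap_{\psi\in\Mcg(\Sigma)}\psi\Gamma\psi^{-1}$, which is still finite-index and is now normal in $\Mcg(\Sigma)$; since $\Gamma_0\subseteq\Gamma$, it is enough to produce a quotient $q$ that works for $\Gamma_0$. Next, recall (as noted before the statement, cf.\ \cite[Section 1.3]{primer_on_mapping_class_groups_FarbMargalit12}) that two collections of disjoint essential curves with homeomorphic complements lie in the same $\Mcg(\Sigma)$-orbit; in particular there are only finitely many $\Mcg(\Sigma)$-orbits of collections $\mathcal{B}$ all of whose complementary components have genus at most $2$. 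Fix orbit representatives $\mathcal{B}_1,\dots,\mathcal{B}_m$.

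For each $j$ the previous corollary says $\Stab_{\Mcg(\Sigma)}(\mathcal{B}_j)\to\Out(\pi_1\Sigma)$ has CSP. Since $\Gamma_0$ has finite index, $\Gamma_0\cap\Stab_{\Mcg(\Sigma)}(\mathcal{B}_j)$ has finite index in $\Stab_{\Mcg(\Sigma)}(\mathcal{B}_j)$ and (the natural map $\Mcg(\Sigma)\to\Out(\pi_1\Sigma)$ being injective, so the restricted kernel is trivial) is therefore a congruence subgroup, hence contains a principal one: there is a finite characteristic quotient $q_j\colon\pi_1\Sigma\onto Q_j$ with $\ker(q_j)_\star\cap\Stab_{\Mcg(\Sigma)}(\mathcal{B}_j)\subseteq\Gamma_0$. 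Put $K=\bigcap_{j=1}^m\ker q_j$, a finite-index characteristic subgroup of $\pi_1\Sigma$, and let $q\colon\pi_1\Sigma\onto Q=\pi_1\Sigma/K$. I claim $\ker q_\star\subseteq\ker(q_j)_\star$ for every $j$: given $\varphi\in\Out(\pi_1\Sigma)$ with $q_\star(\varphi)=1$, lift it to $\widetilde\varphi\in\Aut(\pi_1\Sigma)$; then $\widetilde\varphi$ descends to some $\ad_g\in\Aut(Q)$, so choosing a lift $\widehat g\in\pi_1\Sigma$ of $g$ and replacing $\widetilde\varphi$ by $\ad_{\widehat g}^{-1}\circ\widetilde\varphi$ — still a representative of $\varphi$ — we get a lift that induces the identity on $Q$, hence (as $K\subseteq\ker q_j$) also the identity on $Q_j$, so $(q_j)_\star(\varphi)=1$. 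Therefore $\ker q_\star\cap\Stab_{\Mcg(\Sigma)}(\mathcal{B}_j)\subseteq\ker(q_j)_\star\cap\Stab_{\Mcg(\Sigma)}(\mathcal{B}_j)\subseteq\Gamma_0$ for every $j$.

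Finally, let $\mathcal{B}$ be any collection whose complementary components all have genus at most $2$; write $\mathcal{B}=\psi\cdot\mathcal{B}_j$ with $\psi\in\Mcg(\Sigma)$, so $\Stab_{\Mcg(\Sigma)}(\mathcal{B})=\psi\,\Stab_{\Mcg(\Sigma)}(\mathcal{B}_j)\,\psi^{-1}$. As $\ker q_\star$ is normal in $\Mcg(\Sigma)$,
$\ker q_\star\cap\Stab_{\Mcg(\Sigma)}(\mathcal{B})=\psi\bigl(\ker q_\star\cap\Stab_{\Mcg(\Sigma)}(\mathcal{B}_j)\bigr)\psi^{-1}\subseteq\psi\Gamma_0\psi^{-1}=\Gamma_0\subseteq\Gamma$, which is the assertion. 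I expect the one genuinely delicate point to be the refinement claim of the previous paragraph: there need not be an induced homomorphism $\Out(Q)\to\Out(Q_j)$, so the inclusion $\ker q_\star\subseteq\ker(q_j)_\star$ cannot be read off by functoriality and must be obtained by the explicit lifting-and-normalising of automorphisms above. The remaining ingredients — passing to the normal core of $\Gamma$ and the finiteness of orbits of curve systems — are routine.
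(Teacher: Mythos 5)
Your proof is correct and fills in precisely the argument the paper only sketches in the sentence preceding the corollary: pass to the normal core of $\Gamma$, use finiteness of $\Mcg(\Sigma)$-orbits of such curve systems, apply the preceding corollary to orbit representatives, and intersect the resulting characteristic kernels. The one point you flag as delicate — that $\ker q_\star\subseteq\ker(q_j)_\star$ — is indeed handled correctly by lifting to $\Aut(\pi_1\Sigma)$ as you do, though the normalising step $\ad_{\widehat g}^{-1}\circ\widetilde\varphi$ is not actually needed: any lift of $\varphi$ preserves $\ker q_j$ (since $q_j$ is characteristic) and hence descends compatibly along $Q\onto Q_j$, carrying the inner automorphism $\ad_g$ of $Q$ to the inner automorphism of $Q_j$ given by the image of $g$ — this is the same implicit check the paper itself makes in the proof of Lemma \ref{lmm:csp_ses}.
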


As a third application, we conditionally prove that curve stabilisers have CSP.
\begin{corollary}
Let $\beta$ be a non-separating simple closed curve on $\Sigma_g$. If $\Mcg(\Sigma_{g-1})$ has the CSP, then $\Stab(\beta)\leq\Mcg(\Sigma_g)$ has the CSP.
\end{corollary}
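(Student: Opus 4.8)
The plan is to cut $\Sigma_g$ along $\beta$, feed the pieces into Theorem~\ref{thm:csp_subsurface}, and then reduce the mapping class group of the cut surface down to $\Mcg(\Sigma_{g-1})$ through a short tower of Birman exact sequences. Let $A$ be a closed annular neighbourhood of $\beta$ and let $S=\overline{\Sigma_g\setminus A}$, a surface of genus $g-1$ with two boundary circles, so that $\Sigma_g=A\cup S$ is a decomposition into disjoint subsurfaces glued along their boundaries. Since a Dehn twist acts trivially on the fundamental group of an annulus, the map $\Mcg(A)\to\Out(\pi_1 A)$ is trivial and has the CSP vacuously. The image of $\Mcg(A)\times\Mcg(S)$ in $\Mcg(\Sigma_g)$ is the index-two subgroup $\Stab^+(\beta)$ of classes not exchanging the two sides of $\beta$ (note that $T_\beta$ already lies in the image of $\Mcg(S)$, namely as the twist about either boundary circle of $S$, so the annulus factor only serves to cover all of $\Sigma_g$). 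Thus Theorem~\ref{thm:csp_subsurface} reduces the statement to showing that $\Mcg(S)\to\Out(\pi_1 S)$ has the CSP, together with the passage from $\Stab^+(\beta)$ to $\Stab(\beta)$.

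To treat $\Mcg(S)$ I would remove the boundary data one layer at a time. The Dehn twists about the two boundary circles of $S$ generate a central $\mathbb{Z}^2$ that acts on $\pi_1 S$ by inner automorphisms, so $\Mcg(S)\to\Out(\pi_1 S)$ factors through the quotient $\Mcg(S)/\mathbb{Z}^2$, which is the mapping class group of $\Sigma_{g-1}$ with two marked points, and through its embedding into $\Out(\pi_1 S)$. Any finite-index subgroup of $\Mcg(S)$ containing the kernel of the map to $\Out(\pi_1 S)$ automatically contains this $\mathbb{Z}^2$, hence is the preimage of a finite-index subgroup of $\Mcg(S)/\mathbb{Z}^2$; as congruence subgroups pull back along this quotient, the CSP for $\Mcg(S)$ follows from the CSP for the two-marked-point mapping class group. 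The latter fits into two Birman exact sequences over $\Mcg(\Sigma_{g-1})$, whose kernels are the point-pushing subgroups $\pi_1(\Sigma_{g-1})$ and $\pi_1$ of $\Sigma_{g-1}$ with one marked point. Feeding in the hypothesis that $\Mcg(\Sigma_{g-1})$ has the CSP and propagating CSP up these two extensions then closes the chain.

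The main obstacle is precisely this last step: promoting the CSP from $\Mcg(\Sigma_{g-1})$ to its punctured versions through the Birman exact sequence. This is not a formal consequence of the CSP of the kernel and of the quotient, but the two congruence structures fit together — a finite characteristic quotient of the fundamental group of the punctured surface restricts to a finite quotient of the point-pushing subgroup and descends to a finite characteristic quotient of $\pi_1$ of the closed surface — so the argument should close with a diagram chase, choosing the quotient fine enough to dominate simultaneously a prescribed congruence condition coming from $\Mcg(\Sigma_{g-1})$ and one detecting a given finite-index subgroup of the point-pushing group; the required input that point-pushing subgroups have the CSP is of the kind established by McReynolds~\cite{congruence_subgroup_problem_for_pure_braid_groups_McReynolds10} and also underlying Corollary~\ref{corr:csp_handlepush}. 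The remaining bookkeeping is harmless: $\Stab^+(\beta)$ is itself a congruence subgroup of $\Stab(\beta)$, being the stabiliser of the class of $\beta$ in $H_1(\Sigma_g;\mathbb{Z}/3)$ (here one uses that $[\beta]$ is primitive), so $\Stab^+(\beta)$ and $\Stab(\beta)$ have matching profinite and congruence completions relative to this index-two subgroup, and the CSP descends from the former to the latter.
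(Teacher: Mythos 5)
Your overall strategy is the right one: cut along $\beta$, invoke Theorem~\ref{thm:csp_subsurface}, reduce the cut piece to $\Mcg(\Sigma_{g-1})$ via the Birman exact sequence, and pass from $\Stab^+(\beta)$ to $\Stab(\beta)$ by exhibiting $\Stab^+(\beta)$ as a congruence subgroup (your $H_1(\Sigma_g;\mathbb{Z}/3)$ argument, fed into Lemma~\ref{lmm:csp_ses}, does this cleanly). The annular piece $A$ is a superfluous complication, though not an error: the setup of Theorem~\ref{thm:csp_subsurface} recalled at the start of Section~\ref{sec:pf_csp_subsurface} explicitly allows the two sides of a curve $\beta_j$ to lie in the same subsurface, so one may cut along $\beta$ alone and take the single piece $S$ of genus $g-1$ with two boundary circles; since $T_\beta$ already lies in the image of $\Mcg(S)$, the group $\Lambda$ is the same either way.

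The substantive issue is in your ``diagram chase'' for the Birman step, where the direction of the argument is reversed. You assert that a finite characteristic quotient of $\pi_1$ of the punctured surface ``descends to a finite characteristic quotient of $\pi_1$ of the closed surface''; this is not automatic (it descends only after forcing it to kill the peripheral loops, and then characteristicity is not clear). What is actually needed is the opposite lift, and the right framework is Lemma~\ref{lmm:csp_ses} with $\Delta$ the point-pushing kernel and $\Lambda$ the punctured MCG. Hypothesis~(\ref{slmm:ker}) is McReynolds' CSP for the point-pushing subgroup. For hypothesis~(\ref{slmm:im}), take $\Gamma\leq\Lambda$ of finite index containing $\Delta$, let $\bar\Gamma$ be its image in $\Mcg(\Sigma_{g-1})$, and use the hypothesis to find a characteristic quotient $\bar q\colon\pi_1\Sigma_{g-1}\onto\bar Q$ with $\ker\bar q_\star\subseteq\bar\Gamma$. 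Pull $\ker\bar q$ back along the puncture-filling surjection $\pi_1\Sigma_{g-1,2}\onto\pi_1\Sigma_{g-1}$ to a finite-index subgroup $N$, and replace $N$ by its characteristic core $\widetilde N$ (finite index since the group is finitely generated) to get a characteristic quotient $q\colon\pi_1\Sigma_{g-1,2}\onto Q$. Because mapping classes of the twice-punctured surface preserve the filling kernel (they fix each puncture here, the boundary circles of $S$ being distinguished), any $\varphi\in\ker q_\star\cap\Lambda$ acts innerly on $\bar Q$, hence its image lies in $\ker\bar q_\star\subseteq\bar\Gamma$, so $\varphi\in\Gamma\cdot\Delta=\Gamma$. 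With that correction, and after iterating once more (or using the two-strand pure braid group directly), your proof is complete.
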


Using Theorem \ref{thm:csp_subsurface}, a proof of CSP for MCGs would follow if one could prove the following:
\begin{conjecture}\label{conj:curve_orbit_separation}
Let $\Gamma$ be a finite-index subgroup of $\Mcg(\Sigma)$, and $\alpha$ a simple closed curve on $\Sigma$. Then there exists a congruence subgroup $\Delta\leq\Mcg(\Sigma)$ such that the containment of orbits $\Delta.\alpha\subseteq\Gamma.\alpha$ holds.
\end{conjecture}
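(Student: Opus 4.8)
The plan is to eliminate the curves from the statement, recast it as a separation problem for conjugacy classes in finite quotients of $\pi_1\Sigma$, and then attempt to solve that with a uniformity argument.

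\emph{Step 1: removing the curves.} Since $\Mcg(\Sigma)$ is residually finite and replacing $\Gamma$ by a smaller subgroup only shrinks $\Gamma.\alpha$, we may assume $\Gamma$ is normal of finite index. For $g\in\Mcg(\Sigma)$ one has $g.\alpha\in\Gamma.\alpha$ precisely when $g\in\Gamma\cdot\Stab_{\Mcg(\Sigma)}(\alpha)$, so for a subgroup $\Delta$ the containment $\Delta.\alpha\subseteq\Gamma.\alpha$ is equivalent to $\Delta\subseteq\Gamma\cdot\Stab(\alpha)$. As $\Gamma$ is normal, $\Gamma\cdot\Stab(\alpha)$ is a finite-index subgroup containing $\Stab(\alpha)$. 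Since a subgroup is a congruence subgroup as soon as it contains a principal one, it therefore suffices to prove: \emph{every finite-index subgroup $\Gamma'\leq\Mcg(\Sigma)$ with $\Stab_{\Mcg(\Sigma)}(\alpha)\subseteq\Gamma'$ contains a principal congruence subgroup.} (Conversely, applying this to $\Gamma\cdot\Stab(\alpha)$ recovers Conjecture \ref{conj:curve_orbit_separation}, so the two are equivalent.)

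\emph{Step 2: passing to conjugacy classes.} Let $\lambda\colon\Mcg(\Sigma)\to\Out(\pi_1\Sigma)$ be the Dehn--Nielsen--Baer embedding. For a finite characteristic quotient $q\colon\pi_1\Sigma\onto Q$, let $\Sigma_Q\leq\Mcg(\Sigma)$ be the preimage under $q_\star\circ\lambda\colon\Mcg(\Sigma)\to\Out(Q)$ of the stabiliser in $\Out(Q)$ of the conjugacy class $q([\alpha])$ (the image of the free-homotopy class of $\alpha$). By Definition \ref{def:csp}, $\Sigma_Q$ is a congruence subgroup; it contains $\Stab(\alpha)$; and a direct computation gives $\Sigma_Q.\alpha=\{\,\beta : \beta\text{ has the topological type of }\alpha\text{ and }q([\beta])\text{ is conjugate to }q([\alpha])\text{ in }Q\,\}$. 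Hence, given $\Gamma'$ as in Step 1, it is enough to find $Q$ for which that set lies inside $\Gamma'.\alpha$. Two finiteness facts organise what remains: $\Mcg(\Sigma)$ acts on the curve complex with finitely many orbits of simplices, so there are only finitely many $\Gamma'$-orbits of curves of the topological type of $\alpha$, with representatives $\alpha=\beta_1,\dots,\beta_r$; and $q_\star\circ\lambda$ has finite image, so for each $i$ the set of $Q$-conjugacy classes $\{\,q([\psi.\beta_i]):\psi\in\Gamma'\,\}$ is finite. What must be arranged is therefore: for $2\le i\le r$, none of these finitely many conjugacy classes equals $q([\alpha])$.

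\emph{Step 3: construction and the main obstacle.} To produce $Q$ one would begin with conjugacy separability of surface groups, which separates one curve of the type of $\alpha$ from $\alpha$, and try to make the separation uniform over the $\Gamma'$-orbit of each $\beta_i$: for non-separating $\alpha$ by exploiting the description of $\Stab(\alpha)$ in terms of the mapping class group of the cut surface and the twist $T_\alpha$, for separating $\alpha$ through the gluing behaviour of Theorem \ref{thm:csp_subsurface}, and --- because homological data cannot detect $\Gamma'$-orbits of curves, since elements of the Johnson filtration move $\alpha$ within its homology class --- by descending to deep nilpotent quotients and invoking Theorem \ref{thm:csp_nilpotent} together with the Johnson--congruence link of Corollary \ref{cor:congruence_Johnson}. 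The step I expect to be the main obstacle, and indeed the crux of the conjecture, is precisely this uniformity over $\psi\in\Gamma'$: exhibiting a single finite $Q$ that separates $q([\alpha])$ from the entire set $\{\,q([\psi.\beta_i]):\psi\in\Gamma'\,\}$ (finite for each fixed $Q$, but potentially larger for deeper $Q$) is tantamount to detecting the subgroup $\Gamma'$ inside a finite quotient of $\pi_1\Sigma$ --- that is, the Congruence Subgroup Property restricted to this family of subgroups. I do not expect it to follow from the tools of this paper; a new ingredient --- for example a strong Putman--Wieland-type control of how congruence subgroups act on the homology of finite covers, or a strengthening of the link between the Johnson filtration and congruence subgroups --- seems to be needed.
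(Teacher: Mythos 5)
The statement you were asked to address is Conjecture~\ref{conj:curve_orbit_separation}, and the paper treats it as an \emph{open} conjecture: there is no proof in the paper to compare you against. The author introduces it explicitly as a sufficient condition under which Theorem~\ref{thm:csp_subsurface} would yield the full CSP for Mapping Class Groups, and notes that only an analogue for filling curves is known (conditionally, modulo residual finiteness of hyperbolic groups) by Wilton--Sisto. Your proposal correctly recognises this: you do not claim a proof, and you honestly locate the obstruction rather than papering over it, which is exactly the right response.

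The reformulation work you do is sound. Step~1 is the standard equivalence: after replacing $\Gamma$ by a normal finite-index subgroup, $\Delta.\alpha\subseteq\Gamma.\alpha$ is the same as $\Delta\subseteq\Gamma\cdot\Stab(\alpha)$, so the conjecture becomes the statement that every finite-index subgroup of $\Mcg(\Sigma)$ containing $\Stab(\alpha)$ is a congruence subgroup --- a CSP-type assertion for curve stabilisers, consistent with the curve-stabiliser corollary the paper proves conditionally. Step~2 correctly converts this into a conjugacy-class-separation problem in finite characteristic quotients of $\pi_1\Sigma$, using finiteness of $\Mcg(\Sigma)$-orbits on the curve complex (change of coordinates) and finiteness of $\im(q_\star\circ\lambda)$ to isolate the uniform separation requirement over $\psi\in\Gamma'$. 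The only small imprecision is that an unoriented simple closed curve determines a pair of inverse conjugacy classes of $\pi_1\Sigma$ rather than a single one; this is cosmetic and does not affect anything. Your Step~3 diagnosis --- that the crux is uniformity over the $\Gamma'$-orbit, which is tantamount to controlling $\Gamma'$ itself by a congruence condition --- is accurate, and you are right that Theorems~\ref{thm:csp_nilpotent} and~\ref{thm:csp_subsurface} and Corollary~\ref{cor:congruence_Johnson} do not by themselves close this gap.
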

Notably, an analogue of Conjecture \ref{conj:curve_orbit_separation} appears in the recent independent work of Wilton and Sisto \cite{csp_for_mcgs_and_residual_finiteness_hyp_gps_Wilton24}. They prove a similar statement for a filling $\alpha$, conditional on every hyperbolic group being residually finite.

\subsection{Outline}

In Section \ref{sec:csp_ses} we describe an elementary lemma for proving CSP from simpler assumptions, which underlies proofs of both Theorem \ref{thm:csp_subsurface} and \ref{thm:csp_nilpotent}. It is adapted from \cite{congruence_subgroup_problem_for_pure_braid_groups_McReynolds10}.

In Section \ref{sec:Johnson_homs} we review the basic theory of filtrations on automorphism groups of nilpotent groups. We construct the key tool of Johnson homomorphisms.

Section \ref{sec:csp_nilpotent} is aimed at proving Theorem \ref{thm:csp_nilpotent}. We start in Subsection \ref{ssec:auxiliary} by proving two lemmata which supply quotients of nilpotent groups. Proof of \ref{thm:csp_nilpotent} is completed at the end of Subsection \ref{ssec:pf_csp_nilpotent}, after the main technical result of Proposition \ref{prop:csp_lastterm}.

Section \ref{sec:pf_csp_subsurface} is devoted to the proof of Theorem \ref{thm:csp_subsurface}. We prove some preliminary results in Subsections \ref{ssec:bdry_twists} and \ref{ssec:virt_retraction}, and complete the proof of Theorem \ref{thm:csp_subsurface} at the end of Subsection \ref{ssec:pf_csp_subsurface}, after going through the main technical part in Proposition \ref{prop:subsurface_containment}.

Section \ref{sec:handlepushing} is intended as an accessible illustration of Theorem \ref{thm:csp_subsurface}. There we give an informal introduction to point-pushing and handle-pushing subgroups, and explain that Theorem \ref{thm:csp_subsurface} applies to them.

\section*{Acknowledgements} The author is grateful to Dawid Kielak for suggesting this project and for his support throughout. He would also like to thank David El-Chai Ben-Ezra and Alex Lubotzky for valuable comments on an earlier version of this manuscript. This work was supported by the \textsl{Simons Investigator Award} 409745 from the Simons Foundation.

\section{A composition lemma for CSP}\label{sec:csp_ses}

In this section we present the key tool to prove the CSP with a divide-and-conquer approach. This lemma appears in \cite[Proposition 3.2]{congruence_subgroup_problem_for_pure_braid_groups_McReynolds10}, in a more restricted formulation. Our proof uses the same strategy, but we write it in detail for the sake of completeness.

\begin{lemma}\label{lmm:csp_ses}
Let $\Lambda\to\Out(G)$ be a group homomorphism, with $\Lambda$ finitely generated. Suppose there is a subgroup $\Delta\leq\Lambda$ such that
\begin{enumerate}
\item\label{slmm:ker} the restricted homomorphism $\Delta\to\Out(G)$ has CSP
\item\label{slmm:im} every finite-index subgroup of $\Lambda$ containing $\Delta$ is a congruence subgroup.
\end{enumerate}
Then $\Lambda\to\Out(G)$ has CSP.
\end{lemma}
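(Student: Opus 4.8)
The plan is to take an arbitrary finite-index subgroup $\Gamma \leq \Lambda$ containing $\ker(\Lambda \to \Out(G))$ and produce a principal congruence subgroup of $\Lambda$ contained in $\Gamma$; since any subgroup containing a principal congruence subgroup is congruence, this suffices. First I would intersect with $\Delta$: the subgroup $\Gamma \cap \Delta$ has finite index in $\Delta$ and contains $\ker(\Delta \to \Out(G))$, so by hypothesis \eqref{slmm:ker} it is a congruence subgroup of $\Delta$. Thus there is a finite characteristic quotient $q \colon G \onto Q$ such that $(\ker q_\star) \cap \Delta \subseteq \Gamma \cap \Delta \subseteq \Gamma$, where $q_\star \colon \Out(G) \to \Out(Q)$ and I abusively write $\ker q_\star$ for its preimage in $\Delta$ (really $\Delta \cap \lambda^{-1}(\ker q_\star)$). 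Replacing $Q$ by a further characteristic quotient only shrinks $\ker q_\star$, so I may enlarge $Q$ freely later on.

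The second step is to handle the ``image'' direction. Consider the subgroup $\Gamma \cdot \Delta \leq \Lambda$: it has finite index (as it contains the finite-index subgroup $\Gamma$) and it contains $\Delta$, so by hypothesis \eqref{slmm:im} it is a congruence subgroup of $\Lambda$. Hence there is a finite characteristic quotient $q' \colon G \onto Q'$ and a subgroup $H \leq \Out(Q')$ with $\Gamma \cdot \Delta = (q'_\star \circ \lambda)^{-1}(H)$. In particular $(\ker q'_\star) \cap \Lambda \subseteq \Gamma \cdot \Delta$. By passing to a common characteristic refinement of $Q$ and $Q'$ (e.g.\ the image of $G$ in $Q \times Q'$, or more safely the quotient by the intersection of the two characteristic subgroups), I may assume a single finite characteristic quotient $q$ works simultaneously for both steps: writing $N = (\ker q_\star) \cap \Lambda$ for the resulting principal congruence subgroup of $\Lambda$, we have both $N \cap \Delta \subseteq \Gamma$ and $N \subseteq \Gamma \cdot \Delta$.

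The third step is the elementary group-theoretic combination. Take any $\gamma \in N$. Since $N \subseteq \Gamma\cdot\Delta$, write $\gamma = g\delta$ with $g \in \Gamma$, $\delta \in \Delta$. Then $\delta = g^{-1}\gamma$; here $\gamma \in N$ and $g \in \Gamma$, and I want to conclude $\delta \in N \cap \Delta$. This needs $g \in N$, which is where finite generation of $\Lambda$ enters: the subtlety is that $\Gamma \cap \Delta \subseteq \Gamma$ may be strict, so I cannot directly invert the factorisation. The clean fix is to do the two steps in the right order. Concretely: first apply \eqref{slmm:im} to get a principal congruence subgroup $N_1 \leq \Gamma\cdot\Delta$; then $N_1 \cap \Delta$ is a finite-index subgroup of $\Delta$ containing $\ker(\Delta\to\Out(G))$, so by \eqref{slmm:ker} there is a further characteristic quotient giving a principal congruence subgroup $N_2 \leq N_1$ of $\Lambda$ with $N_2 \cap \Delta \subseteq \Gamma \cap \Delta$. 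Now for $\gamma \in N_2 \subseteq N_1 \subseteq \Gamma\cdot\Delta$ I write $\gamma = g\delta$ with $g \in \Gamma$, $\delta \in \Delta$; but $N_2$ is normal in $\Lambda$ (preimage of a normal subgroup), and I can choose the coset representatives so that $g \in \Gamma \cap (N_2 \cdot \Delta)$ lies in $\Gamma$ while $\delta \in N_2 \cap \Delta \subseteq \Gamma$. Either way $\gamma = g\delta \in \Gamma$, so $N_2 \subseteq \Gamma$ and we are done.

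The main obstacle, and the point requiring genuine care rather than routine bookkeeping, is exactly this last combination: ensuring that membership of $\gamma$ in the principal congruence subgroup forces \emph{both} factors in $\gamma = g\delta$ to be controlled. The right way to organise it is to note that $N_2 \leq \Gamma \cdot \Delta$ and $N_2 \trianglelefteq \Lambda$ imply $N_2 = N_2 \cap (\Gamma\cdot\Delta) = \Gamma' \cdot (N_2 \cap \Delta)$ for $\Gamma' = \Gamma \cap N_2$ (a Dedekind-type modular-law identity, valid because $\Gamma \supseteq \Gamma' $ and $N_2 \cap\Delta$ are both inside $N_2$), and since $\Gamma' \subseteq \Gamma$ and $N_2 \cap \Delta \subseteq \Gamma$ we get $N_2 \subseteq \Gamma$. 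Finite generation of $\Lambda$ is used only to guarantee that the characteristic quotients involved can be chosen finite (so the resulting congruence subgroups have finite index) and that $\Gamma\cdot\Delta$ has finite index in $\Lambda$.
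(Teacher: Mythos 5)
Your overall architecture is right — use hypothesis (\ref{slmm:ker}) to control $\Delta$, hypothesis (\ref{slmm:im}) to control the coset directions, then intersect quotients — but the final combination step rests on a false group-theoretic identity. You claim that $N_2 \leq \Gamma\cdot\Delta$ and $N_2 \trianglelefteq \Lambda$ force $N_2 \cap (\Gamma\cdot\Delta) = (\Gamma\cap N_2)\cdot(N_2\cap\Delta)$, justified "because $\Gamma\cap N_2$ and $N_2\cap\Delta$ are both inside $N_2$." That is not what Dedekind's modular law says: it requires one of the \emph{outer} factors of the product being intersected to lie inside the group you intersect with (e.g.\ $H \leq K$ gives $HA\cap K = H(A\cap K)$), and here neither $\Gamma$ nor $\Delta$ is contained in $N_2$. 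The identity genuinely fails: take $\Lambda=\mathbb{Z}/4\times\mathbb{Z}/2$, $\Gamma=\mathbb{Z}/4\times\{0\}$, $\Delta=\{0\}\times\mathbb{Z}/2$, $N_2=\langle(2,1)\rangle$; then $\Gamma+\Delta=\Lambda$, so $N_2\cap(\Gamma+\Delta)=N_2$, yet $\Gamma\cap N_2$ and $N_2\cap\Delta$ are both trivial. So from $\gamma=g\delta$ with $g\in\Gamma$, $\delta\in\Delta$, you cannot deduce $\delta\in N_2$ (or $g\in N_2$) and the argument stalls.

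The missing idea is that hypothesis (\ref{slmm:im}) should not be applied to $\Gamma\cdot\Delta$, which has forgotten everything about the quotient $q$ you got from step one, but to the smaller group $\Delta\cdot(\Gamma\cap\ker q_\star)$ (normality of $\Gamma\cap\ker q_\star$ in $\Lambda$, and hence that this product is a finite-index subgroup containing $\Delta$, comes for free once you assume $\Gamma\trianglelefteq\Lambda$ WLOG). Then an element $\varphi$ of the principal congruence subgroup from the fibre product decomposes as $\varphi=\delta\gamma$ with $\delta\in\Delta$ and $\gamma\in\Gamma\cap\ker q_\star$, and now \emph{both} $\varphi$ and $\gamma$ lie in $\ker q_\star$, which forces $\delta=\varphi\gamma^{-1}\in\Delta\cap\ker q_\star\subseteq\Gamma$. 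The extra constraint baked into the subgroup to which (\ref{slmm:im}) is applied is exactly what replaces the (false) modular-law step; without it there is no way to propagate membership in $\Gamma$ from one factor to the other.
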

\begin{proof}[Sketch of proof of Lemma \ref{lmm:csp_ses}]
Consider any finite-index $\Gamma\leq\Lambda$ containing $\ker(\Lambda\to\Out(G))$. Without loss of generality we can assume that $\Gamma\triangleleft\Lambda$. Then the intersection $\Gamma\cap\Delta$ is a finite-index subgroup of $\Delta$. Assumption (\ref*{slmm:ker}) tells us that there exists a finite characteristic quotient $q\colon G\onto Q$ satisfying
\begin{equation}\label{eq:ker_containment}
\Delta\cap\ker q_\star\subseteq\Gamma.
\end{equation}

Now consider $\Delta\cdot(\Gamma\cap\ker q_\star)$. This is a finite-index subgroup of $\Lambda$ containing $\Delta$. By assumption (\ref*{slmm:im}), there exists a finite characteristic quotient $r\colon G\onto R$ such that
\begin{equation}\label{eq:im_containment}
\ker r_\star\cap\Lambda\subseteq \Delta\cdot(\Gamma\cap\ker q_\star).
\end{equation}

Let $s\colon G\to S$ be the pullback (or fibre product) of $q$ and $r$. Explicitly, we consider the map $q\times r\colon G\to Q\times R$, and define $S=\im(q\times r)$ to be the image of $G$ inside $Q\times R$, and $s\colon G\onto S$ as the corresponding factorisation. We claim that the principal congruence subgroup induced by $(s,S)$ is contained in $\Gamma$.

To prove the claim consider any $\varphi\in\ker s$. The quotient $r\colon G\to R$ factors through $s\colon G\to S$, so $\varphi$ acts on $R$ by an inner automorphism. By the defining property (\ref{eq:im_containment}) of $(r,R)$, this means that $\varphi\in \Delta\cdot(\Gamma\cap\ker q_\star)$. Write $\varphi=\delta\gamma$ for some $\delta\in\Delta$, $\gamma\in\Gamma\cap\ker q_\star$. Since both $\varphi,\gamma$ belong to $\Lambda\cap\ker q_\star$, we must also have $\delta\in\Lambda\cap\ker q_\star$. But from the defining property (\ref{eq:ker_containment}) of $(q,Q)$ it follows that $\delta\in\Gamma$. Since both $\delta,\gamma$ are elements of $\Gamma$, then so is their product $\varphi$. This proves the claim.
\end{proof}

\section{Background on automorphisms of nilpotent groups}\label{sec:Johnson_homs}

Here we review the basic facts about the filtration on automorphism groups of nilpotent groups. This theory was used by Andreadakis to study the automorphisms of free groups \cite{automorphisms_free_grps_and_free_nil_gps_Andreadakis65}, and by Johnson for studying the Torelli subgroup of mapping class groups \cite{abelian_quot_mcg_Johnson80} (free and surface groups are only residually nilpotent, but that is good enough).

The \emph{lower central series} $\gamma_iG$ of a group $G$ is defined inductively via $\gamma_1G=G$ and $\gamma_{i+1}G=[G,\gamma_iG]$. We say that $G$ is \emph{nilpotent of class $K$} when $\gamma_KG\neq\gamma_{K+1}G=\{1\}$. In particular, groups nilpotent of class 1 are precisely the abelian groups. We write $G_{\tn{ab}}=\nicefrac{G}{\gamma_2G}$ for the abelianisation of $G$.

Let us introduce the key object used to keep track of the algebraic structure of $\Out(G)$.
\begin{definition}\label{def:AJ_filtration}
The \emph{Andreadakis--Johnson filtration} is the sequence of subgroups $\Torr[i]\leq\Out(G)$ defined by
\begin{equation*}
\Torr[i] \defeq \ker\left(\Out(G)\to\Out\left(\nicefrac{G}{\gamma_{i+1}G}\right)\right).
\end{equation*}
\end{definition}
Clearly, $\{1\}=\Torr[K]\leq\Torr[K-1]\leq\dots\leq\Torr[2]\leq\Torr[1]=\Out(G)$ when $G$ is nilpotent of class $K$. The goal of this section is Lemma \ref{lmm:Johnson_hom}, which gives an explicit description of the successive quotients of the Andreadakis--Johnson filtration.

We start by understanding honest automorphisms.
\begin{lemma}\label{lmm:Johnson_aut}
Suppose that $i\geqslant 1$ and $\Phi\in\Aut(G)$ acts trivially on $\nicefrac{G}{\gamma_{i+1}G}$. Then the assignment $\bar{\delta}\Phi\colon x\mapsto\Phi(x)x^{-1}$ descends to a well-defined $\mathbb{Z}$-module homomorphism $\bar{\delta}\Phi\colon G_{\tn{ab}}\to\nicefrac{\gamma_{i+1}G}{\gamma_{i+2}G}$.

Furthermore, the function
\begin{equation*}
\bar{\delta}\quad\colon\quad\ker\left(\Aut(G)\to\Aut\left(\nicefrac{G}{\gamma_{i+1}G}\right)\right)\quad\to\quad\Hom_{\mathbb{Z}}(G_{\tn{ab}}\to\nicefrac{\gamma_{i+1}G}{\gamma_{i+2}G})
\end{equation*}
is a group homomorphism. Moreover, the kernel of $\bar{\delta}$ consists of precisely those automorphisms which act trivially on $\nicefrac{G}{\gamma_{i+2}G}$.
\end{lemma}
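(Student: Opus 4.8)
The plan is to verify each assertion in turn, exploiting standard commutator identities in the lower central series; the main technical engine is the bilinearity of commutators modulo higher terms, i.e.\ the fact that $[xy,z]\equiv[x,z][y,z]$ and $[x,yz]\equiv[x,y][x,z]$ modulo $\gamma_{i+2}G$ once one of the arguments lies in $\gamma_iG$. First I would show $\bar\delta\Phi$ is well-defined as a map into $\nicefrac{\gamma_{i+1}G}{\gamma_{i+2}G}$: since $\Phi$ acts trivially on $\nicefrac{G}{\gamma_{i+1}G}$, for every $x\in G$ we have $\Phi(x)x^{-1}\in\gamma_{i+1}G$, so the formula lands in $\gamma_{i+1}G$, and we then project to $\nicefrac{\gamma_{i+1}G}{\gamma_{i+2}G}$. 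To see it descends to $G_{\tn{ab}}$, I would compute $\bar\delta\Phi(xy)$ and use the cocycle-type identity $\Phi(xy)(xy)^{-1}=\big(\Phi(x)x^{-1}\big)\cdot x\big(\Phi(y)y^{-1}\big)x^{-1}$; the conjugation by $x$ is trivial modulo $\gamma_{i+2}G$ because $\Phi(y)y^{-1}\in\gamma_{i+1}G$ and $[\gamma_{i+1}G,G]=\gamma_{i+2}G$, so $\bar\delta\Phi(xy)=\bar\delta\Phi(x)+\bar\delta\Phi(y)$. This additivity plus the fact that the target is abelian forces $\bar\delta\Phi$ to kill commutators, hence to factor through $G_{\tn{ab}}$, giving a $\mathbb Z$-module homomorphism.

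Next I would check that $\bar\delta$ is a homomorphism on the kernel group. Given $\Phi,\Psi$ both acting trivially on $\nicefrac{G}{\gamma_{i+1}G}$, one computes $(\Phi\Psi)(x)x^{-1}=\Phi\big(\Psi(x)x^{-1}\big)\cdot\big(\Phi(x)x^{-1}\big)$. Since $\Psi(x)x^{-1}\in\gamma_{i+1}G$ and $\Phi$ acts trivially on $\nicefrac{G}{\gamma_{i+1}G}$ — in particular $\Phi$ fixes $\nicefrac{\gamma_{i+1}G}{\gamma_{i+2}G}$ pointwise, because $\gamma_{i+1}G\subseteq\gamma_2 G$ and modulo $\gamma_{i+2}G$ the action is determined by the action on a lower quotient — we get $\Phi\big(\Psi(x)x^{-1}\big)\equiv\Psi(x)x^{-1}\pmod{\gamma_{i+2}G}$, whence $\bar\delta(\Phi\Psi)=\bar\delta\Phi+\bar\delta\Psi$. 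The one point requiring a little care here is justifying that $\Phi$ acts trivially on $\nicefrac{\gamma_{i+1}G}{\gamma_{i+2}G}$; I expect to deduce this by noting $\Phi$ induces the identity on $\nicefrac{G}{\gamma_{i+1}G}$, so for $z\in\gamma_i G$ and $w\in G$ we have $\Phi([z,w])\equiv[\Phi(z),\Phi(w)]\equiv[z,w]$ modulo $\gamma_{i+2}G$ (using $\Phi(z)\equiv z$, $\Phi(w)\equiv w$ mod $\gamma_{i+1}G$ and the bilinearity of the commutator into $\nicefrac{\gamma_{i+1}G}{\gamma_{i+2}G}$), and $\gamma_{i+1}G$ is generated by such $[z,w]$.

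Finally, for the characterisation of $\ker\bar\delta$: if $\Phi$ acts trivially on $\nicefrac{G}{\gamma_{i+2}G}$ then $\Phi(x)x^{-1}\in\gamma_{i+2}G$ for all $x$, so $\bar\delta\Phi=0$. Conversely, if $\bar\delta\Phi=0$ then $\Phi(x)x^{-1}\in\gamma_{i+2}G$ for every $x\in G$, which says exactly that $\Phi$ induces the identity on $\nicefrac{G}{\gamma_{i+2}G}$. Both directions are immediate once the target of $\bar\delta$ has been pinned down as $\nicefrac{\gamma_{i+1}G}{\gamma_{i+2}G}$. I expect the main obstacle to be purely bookkeeping: keeping straight which arguments lie in which term of the lower central series so that the relevant commutator and conjugation corrections genuinely vanish modulo $\gamma_{i+2}G$, rather than any conceptual difficulty — the whole proof is a sequence of applications of the identities $[xy,z]\equiv[x,z][y,z]$, $[x,yz]\equiv[x,y][x,z]$, and $[\gamma_aG,\gamma_bG]\subseteq\gamma_{a+b}G$.
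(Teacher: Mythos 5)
Your proposal is correct and follows essentially the same three-part structure and commutator manipulations as the paper. The only notable divergence is in the homomorphism step: to handle $\Phi\big(\Psi(x)x^{-1}\big)$ you re-prove from scratch that $\Phi$ fixes $\nicefrac{\gamma_{i+1}G}{\gamma_{i+2}G}$ via a generator-by-generator commutator computation, whereas the paper obtains the same conclusion for free from Part 1 -- once $\bar\delta\Phi$ is known to factor through $G_{\tn{ab}}$, it automatically annihilates $\gamma_{i+1}G\subseteq\gamma_2 G$, so $\Phi(g)\equiv g\pmod{\gamma_{i+2}G}$ for all $g\in\gamma_{i+1}G$ without any further work; your extra argument is sound but redundant.
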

\begin{proof}
We just need to check a number of standard properties.

\smallskip
\textbf{Part 1:} \textit{Each $\bar{\delta}\Phi$ is a $\mathbb{Z}$-module homomorphism $G_\tn{ab}\to\nicefrac{\gamma_{i+1}G}{\gamma_{i+2}G}$.}
\smallskip

We start with $\bar{\delta}\Phi(x)=\Phi(x)x^{-1}\cdot\gamma_{i+2}G$ as a function $G\to\nicefrac{G}{\gamma_{i+2}G}$ between sets. Since $\Phi\curvearrowright\nicefrac{G}{\gamma_{i+1}G}$ by identity, we have $\Phi(x)\in x\cdot\gamma_{i+1}G$, so the image of $\bar{\delta}$ lies in $\nicefrac{\gamma_{i+1}G}{\gamma_{i+2}G}$.

We claim that $\bar{\delta}\Phi$ is actually a group homomorphism $G\to\nicefrac{\gamma_{i+1}G}{\gamma_{i+2}G}$. To confirm this take any $x,y\in G$, and note
\begin{align*}
\bar{\delta}\Phi(xy)&=\Phi(x)\Phi(y)y^{-1}x^{-1}=\\
&=\bar{\delta}\Phi(x)\cdot x\cdot\bar{\delta}\Phi(y)\cdot x^{-1}\in\bar{\delta}\Phi(x)\cdot\bar{\delta}\Phi(y)\cdot\gamma_{i+2}G
\end{align*}
where in the last step we used the fact that $\bar{\delta}\Phi(y)\in\gamma_{i+1}G$, so all of its conjugates are congruent modulo $\gamma_{i+2}G$. This shows that $\bar{\delta}\Phi$ is a group homomorphism.

The target $\nicefrac{\gamma_{i+1}G}{\gamma_{i+2}G}$ is abelian, so $\bar{\delta}\Phi$ must factor through $G_{\tn{ab}}$. This factorisation is a homomorphism between abelian groups.

\smallskip
\textbf{Part 2:} \textit{$\bar{\delta}$ itself is a group homomorphism.}
\smallskip

We just need to verify that $\bar{\delta}$ respects the group composition. Take any $\Phi_1,\Phi_2\in\Aut(G)$ which act trivially on $\nicefrac{G}{\gamma_{i+1}G}$. Then
\begin{align*}
\bar{\delta}(\Phi_1\Phi_2)(x) &=\Phi_1\Phi_2(x)\Phi_2(x)^{-1}\cdot\Phi_2(x)x^{-1}=\\
&=\bar{\delta}\Phi_1\big(\Phi_2(x)\big)\cdot\bar{\delta}(\Phi_2)(x)=\\
&=\bar{\delta}\Phi_1\big(\bar{\delta}\Phi_2(x)\big) \cdot \bar{\delta}\Phi_1(x) \cdot \bar{\delta}\Phi_2(x).
\end{align*}
But we know that $\bar{\delta}\Phi_2(x)$ is zero in $G_\tn{ab}$, so it is killed by $\bar{\delta}\Phi_1$, and the first factor vanishes. Therefore $\bar{\delta}(\Phi_1\Phi_2)=\bar{\delta}\Phi_1+\bar{\delta}\Phi_2$, so $\bar{\delta}$ is a group homomorphism from $\ker\left(\Aut(G)\to\Aut\left(\nicefrac{G}{\gamma_{i+1}G}\right)\right)$ to $\Hom_{\mathbb{Z}}(G_{\tn{ab}}\to\nicefrac{\gamma_{i+1}G}{\gamma_{i+2}G})$.

\smallskip
\textbf{Part 3:} \textit{Kernel of $\bar{\delta}$.}
\smallskip

Finally $\bar{\delta}\Phi=0$ is equivalent to having $\Phi(x)\in x\cdot\gamma_{i+2}G$ for every $x\in G$, which precisely means that $\Phi\curvearrowright\nicefrac{G}{\gamma_{i+2}G}$ by identity.
\end{proof}

We are ready to describe the key tool to understand the algebraic structure of automorphisms of nilpotent groups -- the Johnson homomorphisms. Denote
\begin{equation}\label{eq:H}
H_i\defeq\left\{x\in G\ \big|\ [x,y]\in\gamma_{i+1}G \tn{ for all } y\in G\right\}.
\end{equation}
Note that $\gamma_iG\leq H_i$. Also, with the notation from Lemma \ref{lmm:Johnson_aut}, each $x\in H_i$ induces a $\mathbb{Z}$-module homomorphism $\bar{\delta}(\ad_x)\colon G_{\tn{ab}}\to\nicefrac{\gamma_{i+1}G}{\gamma_{i+2}G}$ (which is nothing other than the group commutator $[x,\bullet]$).
\begin{lemma}[Johnson homomorphism]\label{lmm:Johnson_hom}
Let $\bar{\delta}$ be the homomorphism from Lemma \ref{lmm:Johnson_aut}. For $\varphi\in\Torr[i]$, pick a representative $\Phi\in\Aut(G)$ which acts trivially on $\nicefrac{G}{\gamma_{i+1}G}$. Then the assignment $\delta\colon\varphi\mapsto\bar{\delta}\Phi$ is a well-defined $\mathbb{Z}$-module homomorphism
\begin{equation*}
\delta\colon\Torr[i]\to D_i \qquad\text{where}\qquad D_i\defeq\frac{\Hom_{\mathbb{Z}}(G_{\tn{ab}}\to\nicefrac{\gamma_{i+1}G}{\gamma_{i+2}G})}{\left\{\bar{\delta}(\ad_x)\ \big|\ x\in H_i\right\}}.
\end{equation*}
We call $\delta$ the \emph{Johnson homomorphism}. Moreover, $\ker\delta=\Torr[i+1]$.
\end{lemma}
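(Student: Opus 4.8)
The plan is to reduce Lemma~\ref{lmm:Johnson_hom} to Lemma~\ref{lmm:Johnson_aut} by carefully tracking what happens when we pass from honest automorphisms to their classes in $\Out(G)$. First I would address \textbf{well-definedness}. Two representatives $\Phi,\Phi'\in\Aut(G)$ of the same $\varphi\in\Torr[i]$ that both act trivially on $\nicefrac{G}{\gamma_{i+1}G}$ differ by an inner automorphism: $\Phi'=\ad_g\circ\Phi$ for some $g\in G$. Since both act trivially on $\nicefrac{G}{\gamma_{i+1}G}$, so does $\ad_g$, which by~\eqref{eq:H} forces $g\in H_i$. Using the fact that $\bar\delta$ is a homomorphism (Lemma~\ref{lmm:Johnson_aut}, Part~2), $\bar\delta\Phi'=\bar\delta(\ad_g)+\bar\delta\Phi$, so $\bar\delta\Phi'\equiv\bar\delta\Phi$ modulo the subgroup $\{\bar\delta(\ad_x)\mid x\in H_i\}$. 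One subtlety: I should also check that every $\varphi\in\Torr[i]$ \emph{has} a representative acting trivially on $\nicefrac{G}{\gamma_{i+1}G}$ — but that is exactly the definition of $\Torr[i]$ as the kernel of $\Out(G)\to\Out(\nicefrac{G}{\gamma_{i+1}G})$, combined with the observation that an automorphism of $\nicefrac{G}{\gamma_{i+1}G}$ that is inner (induced by conjugation by the image of some $g$) can be corrected by composing the lift with $\ad_{g^{-1}}$; note $\gamma_{i+1}G$ is characteristic so $\Phi$ descends. I would also confirm that $\{\bar\delta(\ad_x)\mid x\in H_i\}$ is genuinely a subgroup of $\Hom_\mathbb{Z}(G_\tn{ab}\to\nicefrac{\gamma_{i+1}G}{\gamma_{i+2}G})$, which again follows from Part~2 of Lemma~\ref{lmm:Johnson_aut} applied to the automorphisms $\ad_x$ (these lie in the relevant kernel precisely because $x\in H_i$), so the quotient $D_i$ makes sense.

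Next I would verify that $\delta$ is a \textbf{$\mathbb{Z}$-module homomorphism}. Given $\varphi_1,\varphi_2\in\Torr[i]$ with representatives $\Phi_1,\Phi_2$ acting trivially on $\nicefrac{G}{\gamma_{i+1}G}$, the product $\Phi_1\Phi_2$ is a representative of $\varphi_1\varphi_2$ acting trivially on $\nicefrac{G}{\gamma_{i+1}G}$, and Part~2 of Lemma~\ref{lmm:Johnson_aut} gives $\bar\delta(\Phi_1\Phi_2)=\bar\delta\Phi_1+\bar\delta\Phi_2$; projecting to $D_i$ yields $\delta(\varphi_1\varphi_2)=\delta(\varphi_1)+\delta(\varphi_2)$. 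Since $D_i$ is an abelian group and $\Torr[i]$ is written additively here, this is all that is needed (the $\mathbb{Z}$-module structure is automatic for abelian groups).

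Finally, the \textbf{identification of the kernel}, $\ker\delta=\Torr[i+1]$. The inclusion $\Torr[i+1]\subseteq\ker\delta$ is the easier direction: if $\varphi\in\Torr[i+1]$, it has a representative $\Phi$ acting trivially on $\nicefrac{G}{\gamma_{i+2}G}$, hence a fortiori on $\nicefrac{G}{\gamma_{i+1}G}$, and by Part~3 of Lemma~\ref{lmm:Johnson_aut} such $\Phi$ has $\bar\delta\Phi=0$, so $\delta(\varphi)=0$. The reverse inclusion is the place I expect the \textbf{main obstacle}: suppose $\delta(\varphi)=0$, i.e.\ for some representative $\Phi$ acting trivially on $\nicefrac{G}{\gamma_{i+1}G}$ we have $\bar\delta\Phi=\bar\delta(\ad_x)$ for some $x\in H_i$. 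Then $\Phi':=\ad_{x}^{-1}\circ\Phi$ is another representative of $\varphi$ — here I need that $\ad_x$ acts trivially on $\nicefrac{G}{\gamma_{i+1}G}$, which holds since $x\in H_i$, so $\Phi'$ still acts trivially there — and by Part~2 of Lemma~\ref{lmm:Johnson_aut}, $\bar\delta\Phi'=\bar\delta\Phi-\bar\delta(\ad_x)=0$. By Part~3 of Lemma~\ref{lmm:Johnson_aut}, $\Phi'$ acts trivially on $\nicefrac{G}{\gamma_{i+2}G}$, so $\varphi$, being represented by $\Phi'$, lies in $\Torr[i+1]$. The one point demanding care is the sign/order bookkeeping when inverting $\ad_x$ inside a nonabelian group, and checking that $\bar\delta(\ad_x^{-1})=-\bar\delta(\ad_x)$ as an element of the $\mathbb{Z}$-module target — but this is immediate from $\bar\delta$ being a homomorphism into an abelian group, applied to $\ad_x\cdot\ad_x^{-1}=\mathrm{id}$.
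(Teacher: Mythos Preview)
Your proposal is correct and follows essentially the same approach as the paper's proof: both reduce everything to Lemma~\ref{lmm:Johnson_aut}, checking first that $\{\bar\delta(\ad_x)\mid x\in H_i\}$ is a subgroup, then that representatives of $\varphi\in\Torr[i]$ are unique up to composition with $\ad_x$ for $x\in H_i$, then that $\delta$ is a homomorphism via Part~2, and finally identifying the kernel by correcting $\Phi$ with $\ad_x^{-1}$ and invoking Part~3. Your write-up is in fact slightly more explicit than the paper's (you spell out both kernel inclusions separately and address the existence of a trivially-acting representative), but the argument is the same.
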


An immediate corollary of Lemma \ref{lmm:Johnson_hom} is that $\nicefrac{\Torr[i]}{\Torr[i+1]}$ is abelian.
\begin{remark}
Although Lemma \ref{lmm:Johnson_hom} contains all of the algebra needed for our purposes, there is more structure to Johnson homomorphisms. To any finitely generated residually (torsion-free nilpotent) group one can associate a graded Lie algebra, by taking the direct sum of the successive quotients, and inducing Lie bracket from the group commutator. Then the homomorphisms $\bar{\delta}$ from Lemma \ref{lmm:Johnson_aut} produce graded derivations of the Lie algebra, and the $\mathbb{Z}$-linear maps that we use are recovered by restricting the derivations to the grade 1 subspace. Moreover, $\bar{\delta}$ sends group-theoretic commutator in the autmorphism group to the Lie-algebra-theoretic commutator of derivations. The normal subgroup of inner automorphisms is sent to the Lie ideal of inner derivations. For a detailed exposition see e.g. \cite[Section 2]{Andreadakis-Johnson_filtration_on_AutFn_CohenHeapPettet11}.
\end{remark}

\begin{proof}[Proof of Lemma \ref{lmm:Johnson_hom}] First, note that $\bar{\delta}\circ\ad$ is a group homomorphism from $H_i$ to $\Hom_{\mathbb{Z}}(G_\tn{ab}\to\nicefrac{\gamma_{i+1}G}{\gamma_{i+2}G})$. Therefore $\left\{\bar{\delta}(\ad_x)\ \big|\ x\in H_i\right\}$ is a sub-$\mathbb{Z}$-module, so $D_i$ is well-defined.

Second, we check well-definedness. For any $\phi\in\Torr[i]$, a representative $\Phi\in\ker(\Aut(G)\to\Aut(\nicefrac{G}{\gamma_{i+1}G}))$ exists, and is unique up to conjugation by an element of $H_i$. But this ambiguity vanishes in $D_i$ by definition. Therefore $\delta\colon\Torr[i]\to D_i$ is well-defined as a function between sets.

Third, we check that $\delta$ is a group homomorphism. This is a consequence of $\bar{\delta}$ being a group homomorphism, as asserted by Lemma \ref{lmm:Johnson_aut}.

Finally we characterise the kernel. By definition $\delta\varphi=0$ iff there exists $x\in H_i$ such that $\bar{\delta}\Phi=\bar{\delta}(\ad_x)$ for some representative $\Phi$ of $\varphi$ which acts trivially on $\nicefrac{G}{\gamma_{i+1}G}$. This means that $\Phi\circ\ad_x^{-1}\in\ker\bar{\delta}$. By Lemma \ref{lmm:Johnson_aut}, this happens precisely when $\Phi\circ\ad_x^{-1}$ acts trivially on $\nicefrac{G}{\gamma_{i+2}G}$. And this is equivalent to $\varphi\in\Torr[i+1]$.
\end{proof}

\section{CSP for automorphisms of nilpotent groups}\label{sec:csp_nilpotent}

The goal of this section is to prove Theorem \ref{thm:csp_nilpotent}. We start by proving in Subsection \ref{ssec:auxiliary} two Lemmata \ref{lmm:seeing_bottom} and \ref{lmm:carve_centre}, which are technical tools for producing special quotients of nilpotent groups. We complete the proof of Theorem \ref{thm:csp_nilpotent} at the end of Subsection \ref{ssec:pf_csp_nilpotent}, after proving its special case Proposition \ref{prop:csp_lastterm}.

The overall proof approach is straightforward: to induct on the length of central series with the help of Lemma \ref{lmm:csp_ses}. This is perhaps closest to the strategy outlined in \cite[Section 4]{csp_fg_nilp_gps_BenEzraLubotzky22}; however, thanks to Lemmata \ref{lmm:seeing_bottom} and \ref{lmm:carve_centre}, we will be able to complete the inductive step for arbitrary (including non-free) nilpotent groups.

\subsection{Auxiliary results}\label{ssec:auxiliary}

In this subsection we bundle the technical results towards Proposition \ref{prop:csp_lastterm} and Theorem \ref{thm:csp_nilpotent}. The first one is a certain strenghtening of residual finiteness of nilpotent groups.
\begin{lemma}\label{lmm:seeing_bottom}
Let $G$ be a finitely generated nilpotent group of class $K$, and take any $M\in\mathbb{N}_+$. There exists a finite-index subgroup $L\leq G$ such that $L\cap\gamma_KG=M\cdot\gamma_KG$.
\end{lemma}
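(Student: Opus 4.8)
The plan is to reduce the statement to residual finiteness of a well-chosen finite quotient. Write $Z\defeq\gamma_KG$. Since $\gamma_{K+1}G=\{1\}$, the subgroup $Z$ is central in $G$; moreover $Z$ is finitely generated (subgroups of finitely generated nilpotent groups are finitely generated), so the subgroup $M\cdot Z$ has finite index in $Z$. Being central, $M\cdot Z$ is normal in $G$, and I would pass to $\bar G\defeq\nicefrac{G}{M\cdot Z}$, which is again finitely generated and nilpotent, and in which the image $\bar Z$ of $Z$ is the \emph{finite} central subgroup $\nicefrac{Z}{M\cdot Z}$.

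Next I would use residual finiteness of the finitely generated nilpotent group $\bar G$ to separate the finitely many nontrivial elements of $\bar Z$: for each $\bar z\in\bar Z\setminus\{1\}$ choose a homomorphism from $\bar G$ to a finite group not killing $\bar z$, and let $\bar L\leq\bar G$ be the intersection of these finitely many kernels. Then $\bar L$ has finite index in $\bar G$ and $\bar L\cap\bar Z=\{1\}$.

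Finally, let $L\leq G$ be the preimage of $\bar L$ under the quotient map $G\onto\bar G$. It has finite index, since $[G:L]=[\bar G:\bar L]<\infty$, and it contains $M\cdot Z$. For $z\in Z$, the membership $z\in L$ is equivalent to the image of $z$ lying in $\bar L$, hence in $\bar L\cap\bar Z=\{1\}$, hence to $z\in M\cdot Z$; therefore $L\cap\gamma_KG=M\cdot\gamma_KG$, as required. (The degenerate cases $M=1$, where one may take $L=G$, and $G$ abelian, where $\bar G$ is already finite, are handled by the same recipe.)

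I do not expect a serious obstacle here: the only ingredients are that $\gamma_KG$ is central (so $M\cdot\gamma_KG$ is normal and the quotient makes sense), that $\nicefrac{\gamma_KG}{M\cdot\gamma_KG}$ is finite (which uses finite generation of $G$), and residual finiteness of finitely generated nilpotent groups. The closest thing to a subtlety is the routine bookkeeping, via the correspondence theorem, that passing to $\bar G$ and back preserves both the index in $G$ and the intersection with $\gamma_KG$.
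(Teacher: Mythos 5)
Your proof is correct, but it takes a genuinely different route from the paper's. You reduce to residual finiteness of finitely generated nilpotent groups: you first pass to $\bar G=\nicefrac{G}{M\cdot\gamma_KG}$, observe that the image of $\gamma_KG$ is finite and central, and then separate its finitely many nontrivial elements from the identity using finite quotients; pulling back the intersection of the kernels gives the desired $L$. The paper instead constructs $L$ explicitly by a bottom-up induction: it refines the lower central series of $G$ to a chain $G=F_1\triangleright\dots\triangleright F_{K'}=\gamma_KG$ with rank-one successive quotients, starts from $L_{K'}=M\cdot\gamma_KG$, and at each step either keeps $L_{i+1}$ (when $\nicefrac{F_i}{F_{i+1}}$ is finite) or enlarges it by $\langle t^m\rangle$ for a suitable power $t^m$ of a splitting generator that normalises $L_{i+1}$ --- in effect reproving the needed fragment of residual finiteness by hand. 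Your version is shorter and cleaner, at the cost of citing a classical theorem (residual finiteness of finitely generated nilpotent groups, due to Hirsch); the paper's version is longer but deliberately self-contained, consistent with the author's stated aim in Section \ref{sec:csp_nilpotent} of giving an elementary proof ``involving only induction and elementary algebra.'' Both arguments rely on the same two facts at the outset --- that $\gamma_KG$ is central, and that $M\cdot\gamma_KG$ has finite index in $\gamma_KG$ because $G$ is finitely generated --- so the divergence is purely in how the finite-index subgroup is then produced.
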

By $M\cdot\gamma_KG$ we mean the set of all $M$-th multiples in the abelian group $\gamma_KG$. An alternative way to phrase Lemma \ref{lmm:seeing_bottom} is to say that the modulo-$M$ map $\gamma_KG\to(\gamma_KG)\otimes\left(\tfrac{\mathbb{Z}}{M\mathbb{Z}}\right)$ factors through $\gamma_KQ$ for some finite quotient $G\to Q$.
\begin{proof}[Proof of Lemma \ref{lmm:seeing_bottom}]
Refine the lower central series of $G$ to a sequence
\begin{equation*}
G=F_1 \triangleright F_2 \triangleright \dots \triangleright F_{K'-1} \triangleright F_{K'}=\gamma_KG
\end{equation*}
such that the successive quotients $\nicefrac{F_i}{F_{i+1}}$ for $1\leqslant i\leqslant K'-1$ are abelian of rank one. We recursively construct a sequence of finite-index subgroups $L_i\leq F_i$ such that $L_i\cap \gamma_KG=M\cdot\gamma_KG$

We start by setting $L_{K'}=M\cdot\gamma_KG$. Since $G$ is finitely generated, then so is $\gamma_KG$, and hence $L_{K'}$ is finite-index in $F_{K'}=\gamma_KG$.

Suppose we have $L_{i+1}$. If $\nicefrac{F_i}{F_{i+1}}$ is finite cyclic, then set $L_i=L_{i+1}$. Otherwise the group $F_i$ splits as $F_{i+1}\rtimes\langle t\rangle$ for some $t\in F_i$ of infinite order. Since $L_{i+1}$ is a finite-index subgroup of a finitely generated $F_{i+1}$, its orbit under the conjugation action of $t$ is finite. Hence there exists $m\in\mathbb{N}_+$ such that conjugation by $t^m$ preserves $L_{i+1}$ setwise. Define $L_i=L_{i+1}\cdot\langle t^m\rangle$. This is a subgroup of $F_i$, its index is finite, and $L_i\cap F_{i+1}=L_{i+1}$, which implies that $L_i\cap\gamma_KG=L_{i+1}\cap\gamma_KG=M\cdot\gamma_KG$.

Finally, $L_1$ satisfies the premises of Lemma \ref{lmm:seeing_bottom}.
\end{proof}

The second technical result produces finite quotients with a small centre.
\begin{lemma}\label{lmm:carve_centre}
Suppose $L\leq G$ is a finite index subgroup of a finitely generated nilpotent group $G$. There exists a finite quotient $r\colon G\to R$ such that $r^{-1}(Z(R))\subseteq Z(G)\cdot L$.
\end{lemma}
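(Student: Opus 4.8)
The plan is to induct on the nilpotency class $K$ of $G$, using the fact that $Z(G) \supseteq \gamma_K G \neq \{1\}$ gives us a nontrivial central subgroup to quotient by. The base case $K=1$ is trivial since then $G$ is abelian and we may take $R = G/(L \cap$ (something cofinite)$)$, but actually when $G$ is abelian $Z(G) = G$ so any finite quotient works. For the inductive step, I would first pass to the quotient $\bar G = G/\gamma_K G$, which is nilpotent of class $K-1$ and still finitely generated, with the image $\bar L$ of $L$ still finite-index. By induction there is a finite quotient $\bar r\colon \bar G \to \bar R$ with $\bar r^{-1}(Z(\bar R)) \subseteq Z(\bar G)\cdot \bar L$. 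Pulling back along $G \to \bar G$ gives a finite quotient $G \to \bar R$ whose kernel contains $\gamma_K G$; the preimage of $Z(\bar R)$ in $G$ is contained in the preimage of $Z(\bar G)\cdot\bar L$, which is $(\text{preimage of }Z(\bar G))\cdot L$. So the remaining task is to control the discrepancy between the preimage of $Z(\bar G)$ in $G$ and $Z(G)$ itself, and to handle the central subgroup $\gamma_K G$ which we quotiented away.

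The key point is that $Z(\bar G) = Z(G/\gamma_K G)$ is exactly $H_K/\gamma_K G$ in the notation of \eqref{eq:H} (elements whose commutators all land in $\gamma_{K+1}G = \{1\}$ modulo nothing — more precisely, the preimage of $Z(G/\gamma_K G)$ in $G$ is $\{x : [x,y]\in\gamma_K G \text{ for all }y\}$, call it $C$). This $C$ is a finitely generated nilpotent group containing $Z(G)$, and the quotient $C/Z(G)$ need not be trivial, but the commutator induces a map $C \to \Hom(G_{\tn{ab}}, \gamma_K G)$ (as in Lemma \ref{lmm:Johnson_aut}) whose kernel is exactly $Z(G)\cap C = Z(G)$. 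So I would combine the inductive quotient $\bar r$ with a further finite quotient detecting enough of $\gamma_K G$: apply Lemma \ref{lmm:seeing_bottom} (or just residual finiteness of the finitely generated abelian group $\gamma_K G$) to obtain a finite quotient $G \to Q$ such that the composite $C \to \Hom(G_{\tn{ab}}, \gamma_K G) \to \Hom(G_{\tn{ab}}, \gamma_K G \otimes \mathbb Z/M)$ still has kernel exactly $Z(G)$ for suitable $M$ — possible since $G_{\tn{ab}}$ is finitely generated, so only finitely many homomorphisms need to be separated from zero. Taking $R$ to be the image of $G$ in $\bar R \times Q$ (the fibre product, as in Lemma \ref{lmm:csp_ses}), an element mapping into $Z(R)$ must map into $Z(\bar R)$ hence lie in $C \cdot L$ by induction, and must also commute with everything modulo the kernel of $G \to Q$, which by the choice of $M$ forces its $C$-component into $Z(G)$; hence it lies in $Z(G)\cdot L$.

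The main obstacle I anticipate is the bookkeeping around "the preimage of $Z(\bar G)$ versus $Z(G)\cdot L$" — one must be careful that $Z(\bar G)$ pulled back to $G$ is genuinely $C$ and not something larger, and that when we write an element of $Z(R)$ as a product $c\cdot\ell$ with $c \in C$, $\ell \in L$, the condition that it also centralises modulo $\ker(G\to Q)$ transfers to $c$ alone (which it does, since $\ell \in L$ is harmless — wait, $\ell$ need not centralise — so one actually needs the fibre product argument to say the element itself centralises modulo $\ker(G\to Q)$, and then that its image in $C/Z(G)$ via the Johnson-type map is detected). The cleanest route is probably to avoid decomposing prematurely: show directly that $r^{-1}(Z(R)) \subseteq C \cdot L$ from the $\bar R$ factor, then intersect with the condition from the $Q$ factor to cut $C$ down to $Z(G)$, and only at the end conclude $r^{-1}(Z(R)) \subseteq (C \cap r^{-1}(Z(R)))\cdot L \subseteq Z(G)\cdot L$. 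I expect verifying this last containment rigorously — that the two constraints interact correctly on a product $c\ell$ — to be the fiddly part, but it is elementary group theory once the quotients are set up.
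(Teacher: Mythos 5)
Your overall strategy is a genuinely different induction from the paper's: you induct on the nilpotency class of $G$, passing to $\bar{G}=G/\gamma_K G$, whereas the paper fixes $G$ and does a descending recursion along the chain $H_K\leq H_{K-1}\leq\dots\leq H_1=G$, maintaining the invariant $H_i\cap q_i^{-1}(Z(Q_i))\subseteq Z(G)\cdot L$. Both use Johnson-type maps as the engine. However, your version has a genuine gap in the final step, and you half-notice it yourself.

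The gap is precisely the interaction you flag: after you know $x\in C\cdot L$ (from the $\bar{R}$-factor, where $C=H_{K-1}$ is the preimage of $Z(\bar{G})$) and $q(x)\in Z(Q)$, you cannot conclude $x\in Z(G)\cdot L$. Writing $x=c\ell$ with $c\in C$ and $\ell\in L$, the image $q(\ell)$ is completely uncontrolled, so $q(c)=q(x)q(\ell)^{-1}$ has no reason to be central in $Q$, and your proposed fallback $r^{-1}(Z(R))\subseteq\bigl(C\cap r^{-1}(Z(R))\bigr)\cdot L$ is false in general for exactly this reason. There is also a secondary problem: you claim one can choose $M$ so that the composite $C\to\Hom(G_{\tn{ab}},\gamma_KG\otimes\mathbb{Z}/M)$ has kernel exactly $Z(G)$; but $C/Z(G)$ embeds into $\Hom(G_{\tn{ab}},\gamma_KG)$ and can be infinite, in which case no single $M$ suffices.

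The paper's construction sidesteps both problems. At each step of the $H_i$-recursion, an element $x\in H_i$ with $q_i(x)\in Z(Q_i)$ has $\bar{\delta}(\ad_x)$ landing in $M_2\cdot\Hom$; the condition (\ref{eq:M_2}) relating $M_2$ to $M_1\defeq|Q_{i+1}|\cdot[G:L]!$ then lets one write $x=z\,y^{M_1}$ with $z\in H_{i+1}$. The point is that the correction factor is a specific $M_1$-th power $y^{M_1}$ --- not an arbitrary element of $L$ --- and this choice of $M_1$ forces $y^{M_1}$ to die in $Q_{i+1}$ (Lagrange, so the central condition transfers to $z$) and simultaneously to lie in $L$ (the factorial), allowing the conclusion $x\in Z(G)\cdot L$. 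Your approach has no analogue of this ``simultaneously dies in the quotient and lands in $L$'' mechanism, and without it the decomposition $x=c\ell$ is a dead end. To repair your proof you would need to replace the decomposition coming from $Z(\bar{G})\cdot\bar{L}$ with one of the paper's type, at which point the argument effectively becomes the paper's.
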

\begin{proof}
Denote the nilpotency class of $G$ by $K$, and let $H_i\leq G$ be as in equation (\ref{eq:H}). Note
\begin{equation*}
Z(G)=H_K\leq H_{K-1}\leq\dots\leq H_2\leq H_1=G.
\end{equation*}

We will recursively construct a sequence of quotients $q_i\colon G\to Q_i$ satisfying the conditions
\begin{equation}\label{eq:carving_ind_inv}
H_i\cap q_i^{-1}(Z(Q_i))\subseteq Z(G)\cdot L.
\end{equation}
We start with $(q_K,Q_K)$ trivial, since $H_K=Z(G)$ ensures (\ref{eq:carving_ind_inv}) at index $i=K$.

\smallskip
\textbf{Step 1:} \textit{Construction of $(q_i,Q_i)$ from $(q_{i+1},Q_{i+1})$.}
\smallskip

Suppose $q_{i+1}\colon G\to Q_{i+1}$ satisfies condition (\ref{eq:carving_ind_inv}). Any element $x\in H_i$ induces an inner automorphism $\ad_x$ which acts on $\nicefrac{G}{\gamma_{i+1}G}$ by identity. Lemma \ref{lmm:Johnson_aut} provides us with an exact sequence
\begin{equation*}\begin{tikzcd}
H_{i+1}\arrow[r]& H_i\arrow[r,"\bar{\delta}\circ\ad"]& \Hom_{\mathbb{Z}}\left(G_{\tn{ab}}\to\nicefrac{\gamma_{i+1}G}{\gamma_{i+2}G}\right)
\end{tikzcd}\end{equation*}
where explicitly $\bar{\delta}(\ad_x)\colon y\cdot\gamma_2G\mapsto [x,y]\cdot\gamma_{i+2}G$ for $x\in H_i$ and $y\cdot\gamma_2G\in G_{\tn{ab}}$.

Set $M_1=|Q_{i+1}|\cdot [G:L]!$, and pick $M_2\in\mathbb{N}_+$ such that
\begin{equation}\label{eq:M_2}
\left(\im\bar{\delta}\circ\ad\right)\cap\big(M_2\cdot\Hom_{\mathbb{Z}}\left(G_{\tn{ab}}\to\nicefrac{\gamma_{i+1}G}{\gamma_{i+2}G}\right)\big)\ \subseteq\ M_1\cdot\left(\im\bar{\delta}\circ\ad\right).
\end{equation}
The factorial in the definition of $M_1$ will be necessary later, because technically we did not assume that $L\triangleleft G$. Also, note that we cannot simply take $M_2=M_1$, because there might be elements of $\Hom_{\mathbb{Z}}(G_{\tn{ab}}\to\nicefrac{\gamma_{i+1}G}{\gamma_{i+2}G})$ which land in the submodule $\im(\bar{\delta}\circ\ad)$ only after taking a proper multiple.

By Lemma \ref{lmm:seeing_bottom}, there exists a finite quotient $r\colon\nicefrac{G}{\gamma_{i+2}G}\onto R$ which fits in the commutative diagram
\begin{equation*}\begin{tikzcd}
& \nicefrac{\gamma_{i+1}G}{\gamma_{i+2}G}\arrow[d,"r"]\arrow[r,hook]\arrow[dl,"\mu"']& \nicefrac{G}{\gamma_{i+2}G}\arrow[d,"r"]\\
\left(\nicefrac{\gamma_{i+1}G}{\gamma_{i+2}G}\right)\otimes\left(\nicefrac{\mathbb{Z}}{M_2\mathbb{Z}}\right)& \gamma_{i+1}R\arrow[hook,r]\arrow[l]& R
\end{tikzcd}\end{equation*}
where $\mu$ is the natural modulo-$M_2$ map of $\mathbb{Z}$-modules. Note that $\gamma_{i+1}R$ is central in $R$. Define $Q_i$ to be the image of $G$ in $Q_{i+1}\times R$ under $q_{i+1}\times r$, and $q_i\colon G\onto Q_i$ to be the corresponding factorisation.

\smallskip
\textbf{Step 2:} \textit{The quotient $(q_i,Q_i)$ satisfies the condition (\ref{eq:carving_ind_inv}).}
\smallskip

We now have $(q_i,Q_i)$, and we claim (\ref{eq:carving_ind_inv}) holds at index $i$. To verify this, take any $x\in H_i$ such that $q_i(x)\in Z(Q_i)$. Since $x\in H_i$, it defines a $\mathbb{Z}$-linear map
\begin{equation*}
\bar{\delta}(\ad_x)\colon G_{\tn{ab}}\to\nicefrac{\gamma_{i+1}G}{\gamma_{i+2}G}
\end{equation*}
in accordance with Lemma \ref{lmm:Johnson_aut}. But the image of $x$ in $R$ is central, so for every $w\in G$, we have that $\bar{\delta}(\ad_x)(w\cdot\gamma_2G)=[x,w]\cdot\gamma_{i+2}G$ is in the kernel of $r$. This means that any vector in the image of $\bar{\delta}(\ad_x)$ must be an $M_2$-th multiple in the $\mathbb{Z}$-module $\nicefrac{\gamma_{i+1}G}{\gamma_{i+2}G}$. Therefore
\begin{equation*}
\bar{\delta}(\ad_x)\in M_2\cdot\Hom_{\mathbb{Z}}\left(G_{\tn{ab}}\to\nicefrac{\gamma_{i+1}G}{\gamma_{i+2}G}\right).
\end{equation*}
By the choice (\ref{eq:M_2}) of $M_2$, this means that $\bar{\delta}(\ad_x)$ is an $M_1$-th multiple of an element of $\im(\bar{\delta}\circ\ad)$. Write this as $\bar{\delta}(\ad_x)=M_1\cdot\bar{\delta}(\ad_y)$ for some $y\in H_i$. The kernel of $\bar{\delta}\circ\ad$ is precisely $H_{i+1}$, so
\begin{equation*}
xy^{-M_1}=z
\end{equation*}
for some $z\in H_{i+1}$.

Now we examine the consequences of this equation in $Q_i$. From Lagrange's theorem we know that $q_{i+1}$ annihilates $M_1$-th powers, so $q_{i+1}(y^{M_1})$ is trivial in $Q_{i+1}$. Therefore $q_{i+1}(z)=q_{i+1}(x)$. By assumption $q_i(x)$ is central in $Q_i$, so we must also have $q_{i+1}(x)\in Z(Q_{i+1})$ in its quotient $Q_{i+1}$. Since $z\in H_{i+1}$ and $q_{i+1}(z)\in Z(Q_{i+1})$, from condition (\ref{eq:carving_ind_inv}) at the index $i+1$ we conclude $z\in Z(G)\cdot L$. But $M_1$-th powers are forced to lie in $L$ (because $[G:L]!$ divides $M_1$), so $y^{M_1}\in L$. Therefore $x\in Z(G)\cdot L$, confirming the condition (\ref{eq:carving_ind_inv}) at the index $i$.

\bigskip
Finally, $(q_1,Q_1)$ satisfies (\ref{eq:carving_ind_inv}) at the index $i=1$, which means that it verifies the premises of Lemma \ref{lmm:carve_centre}.
\end{proof}

\subsection{Proof of Theorem \ref{thm:csp_nilpotent}}\label{ssec:pf_csp_nilpotent}

As a step towards Theorem \ref{thm:csp_nilpotent} we prove Proposition \ref{prop:csp_lastterm}, which gives us CSP of arbitary subgroups of the last term of the Andreadakis--Johnson filtration. It follows from Lemmata \ref{lmm:seeing_bottom} and \ref{lmm:carve_centre}.
\begin{proposition}\label{prop:csp_lastterm}
Let $G$ be a finitely generated nilpotent group of class $K\geqslant 2$, and $\Lambda\leq\Torr[K]$. Then the inclusion $\Lambda\hookrightarrow\Out(G)$ has CSP.
\end{proposition}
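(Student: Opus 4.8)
\emph{The plan} is to reduce the Proposition to a single‑modulus statement and then manufacture the required characteristic quotient out of Lemmata \ref{lmm:seeing_bottom} and \ref{lmm:carve_centre}. \emph{Reduction.} Since $G$ has class $K$, the term $\Torr[K-1]$ is the last nontrivial step of the Andreadakis--Johnson filtration, and Lemma \ref{lmm:Johnson_hom} applied with $i=K-1$ (recall $\Torr[K]=\{1\}$) gives an \emph{injective} homomorphism $\delta\colon\Torr[K-1]\hookrightarrow D_{K-1}$. As $D_{K-1}$ is a quotient of $\Hom_{\mathbb Z}(G_\tn{ab},\gamma_KG)$, it is a finitely generated abelian group, hence so are $\Torr[K-1]$ and $\Lambda$. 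A subgroup of $\Lambda$ is congruence as soon as it contains a principal congruence subgroup $\Lambda\cap\ker q_\star$ for some finite characteristic quotient $q\colon G\onto Q$, and every finite-index $\Gamma\leq\Lambda$ contains $\Lambda\cap M\cdot\Torr[K-1]$ for a suitable $M\in\mathbb N_+$ (standard: finitely generated abelian groups are subgroup separable, so $\Gamma$ is closed in the profinite topology of $\Torr[K-1]$). It therefore suffices to show: for every $M\in\mathbb N_+$ there is a finite characteristic quotient $q\colon G\onto Q$ with $\Torr[K-1]\cap\ker q_\star\subseteq M\cdot\Torr[K-1]$.

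\emph{Constructing $q$.} Fix $M$, enlarging it so that the torsion subgroup of $\gamma_KG$ has exponent dividing $M$. Using Lemma \ref{lmm:seeing_bottom}, take a finite characteristic quotient through which the reduction $\gamma_KG\to\gamma_KG\otimes(\mathbb Z/M'\mathbb Z)$ factors, for a suitably large multiple $M'$ of $M$; then $\ker q\cap\gamma_KG=M'\cdot\gamma_KG$, so $\gamma_KQ$ is nontrivial and records $\gamma_KG$ modulo $M'$. Then apply Lemma \ref{lmm:carve_centre} to $\overline G\defeq\nicefrac G{\gamma_KG}$ (which has class $K-1$) with respect to a finite-index subgroup $\overline L\leq\overline G$ chosen with enough divisibility — playing the role of the factorial in the proof of Lemma \ref{lmm:carve_centre} — obtaining a finite quotient in which every element lying over the centre comes from $Z(\overline G)\cdot\overline L$. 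Let $q\colon G\onto Q$ be the image of $G$ in the product of these two quotients; being assembled from characteristic quotients, it is characteristic. Writing $\overline Q\defeq\nicefrac Q{\gamma_KQ}$ and $H_{K-1}$ as in (\ref{eq:H}), and $L\leq G$ for the preimage of $\overline L$, the key feature of $q$ is: any $x\in Q$ whose image in $\overline Q$ is central is the image of an element of $H_{K-1}\cdot L$.

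\emph{The core computation.} Take $\varphi\in\Torr[K-1]\cap\ker q_\star$ with a representative $\Phi\in\Aut(G)$ acting trivially on $\overline G$, so $\delta\varphi=[\bar\delta\Phi]$ with $\bar\delta\Phi\in\Hom_{\mathbb Z}(G_\tn{ab},\gamma_KG)$ by Lemma \ref{lmm:Johnson_aut}. Triviality of $q_\star\varphi$ gives $q\circ\Phi=\ad_r\circ q$ for some $r\in Q$, and triviality of $\Phi$ on $\overline G$ forces $[r,Q]\subseteq\gamma_KQ$, i.e.\ $r$ is central in $\overline Q$; hence, for any lift $\tilde r\in G$ of $r$, the key feature gives $\tilde r=h\ell$ with $h\in H_{K-1}$ and $\ell\in L$ (absorbing a central factor). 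Unwinding $q\circ\Phi=\ad_r\circ q$ yields the identity $q\circ\bar\delta\Phi=q\circ[\tilde r,\bullet]$ of maps on $G_\tn{ab}$, and the commutator identity $[h\ell,g]={}^{h}[\ell,g]\cdot[h,g]$ splits the right-hand side into the inner term $[h,\bullet]=\bar\delta(\ad_h)$, which vanishes in $D_{K-1}$, and a residual $\ell$-term, which one checks lands in $M\cdot\gamma_KG$ modulo $\ker q$ thanks to the divisibility built into $\overline L$ and $M'$. Combining this with $\ker q\cap\gamma_KG=M'\cdot\gamma_KG$, $M\mid M'$, and the torsion assumption on $M$, one concludes $\bar\delta\Phi\in M\cdot\Hom_{\mathbb Z}(G_\tn{ab},\gamma_KG)+\{\bar\delta(\ad_x)\mid x\in H_{K-1}\}$, i.e.\ $\delta\varphi\in M\cdot D_{K-1}$, i.e.\ $\varphi\in M\cdot\Torr[K-1]$, as required.

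\emph{Main obstacle.} The delicate point is precisely the claim that the $\ell$-part is $M$-divisible: one must tune $\overline L$ together with the moduli $M,M'$ so that the inner-automorphism ambiguity carried by $\ell$ is genuinely killed modulo $M$ and does not collide with torsion in $\gamma_KG$ — this is the exact reason the factorial cannot be dispensed with in Lemma \ref{lmm:carve_centre}. Note that for $K=2$ this difficulty evaporates, since $H_{K-1}=H_1=G$, so $\tilde r\in H_{K-1}$ automatically and no $\ell$-part appears; the case $K\geqslant3$ is where Lemma \ref{lmm:carve_centre} does the real work. Everything else is elementary commutator algebra together with the structure theory of finitely generated abelian groups (in particular, the reduction step and the upgrade from ``image in $M\cdot\gamma_KG$'' to ``$M$-divisible homomorphism'').
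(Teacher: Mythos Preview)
Your overall architecture is the paper's: embed the last nontrivial Johnson term into $D_{K-1}$, reduce to a single modulus $M$, manufacture a quotient from Lemma~\ref{lmm:seeing_bottom} (seeing $\gamma_KG$ mod $M$) together with Lemma~\ref{lmm:carve_centre} (applied to $\overline G=G/\gamma_KG$), and then decompose the conjugating element as $h\ell$ with $h\in H_{K-1}$. So far this is exactly the paper's proof.

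The gap is in the ``$\ell$-term''. You leave $\overline L$ unspecified, describing it only as having ``enough divisibility'', and then assert that ``one checks'' the $\ell$-contribution lands in $M\cdot\gamma_KG$ modulo $\ker q$. This is not a divisibility phenomenon, and no amount of making $\overline L$ consist of high powers will force $[\ell,g]$ (which a priori lives only in $\gamma_2G$) into $M\cdot\gamma_KG$. The paper's mechanism is different and specific: one takes $\overline L$ to be the image of $\ker p$ in $\overline G$, where $p$ is the quotient produced by Lemma~\ref{lmm:seeing_bottom}. Then $\ell\in\ker p\cdot\gamma_KG$, so $p(\ell)$ is central in $P$, and after replacing $\Phi$ by $\ad_h^{-1}\Phi$ the new representative acts \emph{trivially on $P$} (not merely on $Q$). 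This gives $\Phi(g)g^{-1}\in\ker p\cap\gamma_KG\subseteq M\cdot\gamma_KG$ directly --- no commutator splitting, no ``modulo $\ker q$'' bookkeeping. Your commutator identity $[h\ell,g]={}^{h}[\ell,g]\cdot[h,g]$ is a red herring here; the point is not to analyse $[\ell,g]$ but to note that conjugation by $\ell$ is inner-by-a-central-element on $P$.

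Two smaller issues. First, your final chain ``$\delta\varphi\in M\cdot D_{K-1}$, i.e.\ $\varphi\in M\cdot\Torr[K-1]$'' does not follow: $\delta$ is injective but its image need not be pure in $D_{K-1}$. The paper avoids this by setting up the reduction with the target $\delta^{-1}(M\cdot D_{K-1})$ rather than $M\cdot\Torr[K-1]$; you should do the same. Second, ``for any lift $\tilde r$'' happens to be fine once $\overline L$ is chosen as above (the preimage condition from Lemma~\ref{lmm:carve_centre} is on elements of $\overline G$, so every lift works), but your wording suggests you were not tracking this.
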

Note that by Lemma \ref{lmm:Johnson_hom} the subgroup $\Torr[K]$ is abelian, so $\Lambda$ can only be a sub-$\mathbb{Z}$-module.
\begin{proof}[Proof of Proposition \ref{prop:csp_lastterm}] Lemma \ref{lmm:Johnson_hom} provides us with an injective group homomorphism $\delta\colon\Torr[K]\to D_K$. Moreover, $D_K$ is a finitely generated $\mathbb{Z}$-module. Therefore, for every finite-index $\Gamma\leq\Lambda$ there exists $M\in\mathbb{N}_+$ such that $\delta(\Lambda)\cap M\cdot D_K\subseteq\delta(\Gamma)$. To finish the proof it suffices to show that for every $M\in\mathbb{N}_+$ there exists a finite characteristic quotient $r\colon G\onto R$ such that
\begin{equation}\label{eq:cong_R_in_M}
\ker\left(r_\star\colon\Torr[K]\to\Out(R)\right) \subseteq \delta^{-1}(M\cdot D_K).
\end{equation}

\smallskip
\textbf{Step 1:} \textit{Construction of the quotient $(r,R)$.}
\smallskip

Lemma \ref{lmm:seeing_bottom} says that there exists a finite quotient $p\colon G\onto P$ such that $\gamma_KG\cap\ker p\subseteq M\cdot\gamma_KG$.

The map $p$ induces a finite quotient $\nicefrac{G}{\gamma_KG}\onto\nicefrac{P}{\gamma_KP}$. By Lemma \ref{lmm:carve_centre}, applied to the group $\nicefrac{G}{\gamma_KG}$ with subgroup $(\ker p)\cdot\gamma_KG$, there exists a finite quotient $q\colon\nicefrac{G}{\gamma_KG}\onto Q$ such that
\begin{equation*}
q^{-1}(Z(Q))\subseteq Z(\nicefrac{G}{\gamma_KG})\cdot\left(\nicefrac{\ker p}{\gamma_KG}\right)
\end{equation*}
holds in $\nicefrac{G}{\gamma_KG}$. We can consider $(q,Q)$ as a quotient of $G$, and then the above condition means that
\begin{equation}\label{eq:centre_Q}
q^{-1}(Z(Q))\subseteq H_K\cdot(\ker p)
\end{equation}
holds in $G$, where $H_K$ is defined in equation (\ref{eq:H}).

Let $(r,R)$ be any characteristic quotient such that the product $p\times q\colon G\to P\times Q$ factors through $r\colon G\onto R$.

\smallskip
\textbf{Step 2:} \textit{Representatives for elements of $\ker r_\star$.}
\smallskip

In this step we prove that any $\varphi\in\ker\left(r_\star\colon\Torr[K]\to\Out(R)\right)$ has a representative $\Phi\in\Aut(G)$ which acts trivially both on $\nicefrac{G}{\gamma_KG}$ and on the quotient $P$.

Start with a representative $\Phi'\in\Aut(G)$ which acts as the identity on $\nicefrac{G}{\gamma_KG}$. It exists because $\varphi\in\Torr[K]$. Since $\Phi'$ becomes inner in $R$, there exists $w\in G$ such that for all $x\in G$ the elements $\Phi'(x)$ and $wxw^{-1}$ differ by an element of $\ker r$.

Let us we examine $\Phi'\curvearrowright Q$. On the one hand, $\Phi'$ acts trivially on $\nicefrac{G}{\gamma_KG}$, so it must also act trivially on $Q$. On the other hand, the action $\Phi'\curvearrowright Q$ agrees with conjugation by $q(w)$. Hence we must have $q(w)\in Z(Q)$. The defining property (\ref{eq:centre_Q}) of $(q,Q)$ now means precisely that we can write $w=uv$ for some $u\in H_K$ and $v\in\ker p$.

Now we adjust the representative of $\varphi$ by choosing the automorphism $\Phi=\ad_u^{-1}\circ\Phi'$. We know that $u\in H_K$, so $\ad_u$ becomes the identity over $\nicefrac{G}{\gamma_KG}$, and we still have $\Phi\curvearrowright\nicefrac{G}{\gamma_KG}$ trivially. The induced action of $\Phi$ on the quotient $P$ agrees with conjugation by $p(v)$; but we know that $u\in\ker p$, so this action is trivial too.

\smallskip
\textbf{Step 3:} \textit{Principal congruence subgroup induced by $(r,R)$ lies in $\delta^{-1}(M\cdot D_K)$.}
\smallskip

We are ready to prove equation (\ref{eq:cong_R_in_M}). Consider any $\varphi\in\ker r_\star$. From the previous step we know that it has a representative $\Phi\in\Aut(G)$ such that $\Phi(x)$ belongs to $x\cdot\gamma_KG$ and $x\cdot\ker p$ for every $x\in G$.

From the defining property of $(p,P)$ we know that $\Phi(x)x^{-1}\in M\cdot\gamma_KG$ for all $x\in G$. This means that the $\mathbb{Z}$-module homomorphism $G_\textnormal{ab}\to\gamma_KG$ defined by $x\mapsto\Phi(x)x^{-1}$ only attains $M$-th multiples as its values. Therefore this homomorphism is itself an $M$-th multiple of some other $\mathbb{Z}$-module homomorphism. We have found a representative of $\delta\phi$ in $\Hom_\mathbb{Z}(G_\textnormal{ab}\to\gamma_KG)$ which is an $M$-th multiple, so $\delta\phi\in M\cdot D_K$, confirming equation (\ref{eq:cong_R_in_M}).
\end{proof}

We are ready to complete the proof of Theorem \ref{thm:csp_nilpotent}. We will inductively apply Lemma \ref{lmm:csp_ses}, and use Proposition \ref{prop:csp_lastterm} to do the heavy lifting in the inductive step. Let us recall the statement.
\begin{theorem*}[restatement of \ref{thm:csp_nilpotent}]
Suppose $G$ is a finitely generated nilpotent group, and $\Lambda\leq\Out(G)$. If the linear action on abelianisation $\Lambda\to\Out(G_\textnormal{ab})$ has CSP, then $\Lambda\to\Out(G)$ has CSP.
\end{theorem*}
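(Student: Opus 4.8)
The plan is to induct on the nilpotency class $K$ of $G$, peeling off the deepest layer $\gamma_KG$ with the composition Lemma \ref{lmm:csp_ses}. When $K=1$ there is nothing to prove, since $G=G_\tn{ab}$; so suppose $K\geqslant 2$ and that the theorem is known for all finitely generated nilpotent groups of class $<K$. (We may assume $\Lambda$ finitely generated, as Lemma \ref{lmm:csp_ses} requires; and recall that, $\Lambda$ being a subgroup of $\Out(G)$, CSP for $\Lambda\to\Out(G)$ just says that every finite-index subgroup of $\Lambda$ is a congruence subgroup.) Let $\pi\colon G\onto\bar G\defeq\nicefrac{G}{\gamma_KG}$, a finitely generated nilpotent group of class $\leqslant K-1$; because $\gamma_KG\leqslant\gamma_2G$ we have $\bar G_\tn{ab}=G_\tn{ab}$, and the induced map $\pi_\star\colon\Out(G)\to\Out(\bar G)$ has kernel $\Torr[K-1]=\ker\big(\Out(G)\to\Out(\nicefrac{G}{\gamma_KG})\big)$, the deepest (potentially nontrivial) term of the Andreadakis--Johnson filtration. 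Put $\Delta\defeq\Lambda\cap\Torr[K-1]\triangleleft\Lambda$ and $\bar\Lambda\defeq\pi_\star(\Lambda)\cong\nicefrac{\Lambda}{\Delta}\leqslant\Out(\bar G)$; the plan is to verify hypotheses \ref*{slmm:ker} and \ref*{slmm:im} of Lemma \ref{lmm:csp_ses} for the pair $\Delta\leqslant\Lambda$.

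Hypothesis \ref*{slmm:ker}, that $\Delta\to\Out(G)$ has CSP, holds because $\Delta$ lies in the deepest term of the Andreadakis--Johnson filtration, so this is exactly Proposition \ref{prop:csp_lastterm}. For hypothesis \ref*{slmm:im}, note that the finite-index subgroups of $\Lambda$ containing $\Delta$ are precisely the preimages under $\Lambda\onto\bar\Lambda$ of finite-index subgroups of $\bar\Lambda$, so it suffices to prove (i) that $\bar\Lambda\to\Out(\bar G)$ has CSP, and (ii) that the preimage in $\Lambda$ of a congruence subgroup of $\bar\Lambda\to\Out(\bar G)$ is a congruence subgroup of $\Lambda\to\Out(G)$. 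Point (ii) is a direct check: for a finite characteristic quotient $\bar q\colon\bar G\onto\bar Q$ the composite $q\defeq\bar q\circ\pi\colon G\onto\bar Q$ is finite and again characteristic (its kernel is the $\pi$-preimage of a characteristic subgroup of the characteristic quotient $\bar G$ of $G$), and $q_\star=\bar q_\star\circ\pi_\star$, so congruence subgroups correspond under preimage. For (i) I would invoke the inductive hypothesis for $\bar G$, reducing the claim to the statement that $\bar\Lambda\to\Out(\bar G_\tn{ab})=\Out(G_\tn{ab})$ has CSP. Here the given map $\Lambda\to\Out(G_\tn{ab})$ factors through $\bar\Lambda$, because $\Out(G)\to\Out(G_\tn{ab})$ factors as $\Out(G)\xrightarrow{\pi_\star}\Out(\bar G)\to\Out(G_\tn{ab})$ (using $\bar G_\tn{ab}=G_\tn{ab}$) while $\Delta\leqslant\ker\pi_\star$; and CSP is inherited by any such quotient of the source: a finite-index subgroup of $\bar\Lambda$ pulls back to a finite-index subgroup of $\Lambda$ containing $\ker(\Lambda\to\Out(G_\tn{ab}))$, which by assumption is a congruence subgroup, and its congruence description descends to $\bar\Lambda$ since we have only quotiented out a subgroup of that kernel. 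This gives (i), hence \ref*{slmm:im}, and Lemma \ref{lmm:csp_ses} then yields CSP for $\Lambda\to\Out(G)$, completing the induction.

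Most of the actual work has been isolated into Proposition \ref{prop:csp_lastterm} (which in turn rests on the quotient-producing Lemmata \ref{lmm:seeing_bottom} and \ref{lmm:carve_centre}); granting it, the argument above is largely bookkeeping. The step that genuinely needs care is the descent of CSP along $\Lambda\onto\bar\Lambda$ used in (i) -- the general principle that CSP of a homomorphism $\mu\colon\Lambda\to\Out(A)$ is inherited by $\nicefrac{\Lambda}{N}\to\Out(A)$ whenever $N\leqslant\ker\mu$ -- since this is precisely what lets the inductive hypothesis be applied to $\bar\Lambda$ rather than to $\Lambda$ itself.
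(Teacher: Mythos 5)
Your proof is correct and mirrors the paper's own argument: induct on the nilpotency class, take $\Delta$ to be the intersection of $\Lambda$ with the deepest nontrivial term of the Andreadakis--Johnson filtration, use Proposition \ref{prop:csp_lastterm} for hypothesis (\ref*{slmm:ker}) of Lemma \ref{lmm:csp_ses}, and the inductive hypothesis applied to $\nicefrac{G}{\gamma_KG}$ for hypothesis (\ref*{slmm:im}). You correctly work with $\Torr[K-1]=\ker\big(\Out(G)\to\Out(\nicefrac{G}{\gamma_KG})\big)$ where the paper writes $\Torr[K]$ (an off-by-one slip, since by Definition \ref{def:AJ_filtration} the latter is trivial for a class-$K$ group), and you make explicit two checks the paper leaves implicit --- that the CSP hypothesis for $\Lambda\to\Out(G_{\textnormal{ab}})$ descends along the quotient $\Lambda\onto\nicefrac{\Lambda}{\Delta}$, and that congruence subgroups of $\nicefrac{\Lambda}{\Delta}\to\Out(\nicefrac{G}{\gamma_KG})$ pull back to congruence subgroups of $\Lambda\to\Out(G)$.
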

\begin{proof}[Proof of Theorem \ref{thm:csp_nilpotent}]
We induct on the nilpotence class $K$ of $G$. Base $K=1$ corresponds to abelian $G$, and the conclusion is the same as the assumption.

It remains to do the inductive step. Let $G$ be nilpotent of class $K\geqslant 2$, and $\Lambda\leq\Out(G)$ be a subgroup whose image in $\Gl(G_{\tn{ab}})$ has CSP. Define $\Delta=\Lambda\cap\Torr[K]$. From Proposition \ref{prop:csp_lastterm} we know that $\Delta\to\Out(G)$ has CSP. By Lemma \ref{lmm:csp_ses}, it is enough to show that every finite-index subgroup of $\Lambda$ containing $\Delta$ is congruence.

Observe that the kernel of the composition $\Lambda\to\Out(G)\to\Out\left(\nicefrac{G}{\gamma_KG}\right)$ is precisely $\Delta$. Moreover, the abelianisations of $\nicefrac{G}{\gamma_KG}$ and $G$ are naturally identified, and the action of $\Lambda$ on them factors through $\nicefrac{\Lambda}{\Delta}$.

Consider a finite-index $\Gamma\leq\Lambda$ containing $\Delta$. From the induction hypothesis, we know that $\nicefrac{\Lambda}{\Delta}\to\Out\left(\nicefrac{G}{\gamma_KG}\right)$ has CSP. Therefore $\nicefrac{\Gamma}{\Delta}$ is a congruence subgroup of $\nicefrac{\Lambda}{\Delta}$ with respect to $\nicefrac{\Lambda}{\Delta}\to\Out\left(\nicefrac{G}{\gamma_KG}\right)$. Therefore $\Gamma$ is a congruence subgroup of $\Lambda$ with respect to $\Lambda\to\Out\left(\nicefrac{G}{\gamma_KG}\right)$. But then it must also be a congruence subgroup with respect to $\Lambda\to\Out(G)$.
\end{proof}

\section{Proof of Theorem \ref{thm:csp_subsurface}}\label{sec:pf_csp_subsurface}

The aim of this section is to prove Theorem \ref{thm:csp_subsurface}. It is organised as follows. As prerequisites we investigate what happens at the boundaries of subsurfaces by proving Lemma \ref{lmm:csp_multitwist} in Subsection \ref{ssec:bdry_twists}. Next, in Subsection \ref{ssec:virt_retraction} we introduce Lemma \ref{lmm:virt_retraction}, which is a tool to extend quotients of fundamental group of subsurfaces to the fundamental group of the whole surface. The proof of Theorem \ref{thm:csp_subsurface} is completed at the end of Subsection \ref{ssec:pf_csp_subsurface}, after the main technical argument in Proposition \ref{prop:subsurface_containment}.

Before proceeding we recall the setup of Theorem \ref{thm:csp_subsurface}, with explicit notation. We have a surface $\Sigma$ of finite type, possibly punctured but without boundary, and a collection of disjoint simple closed curves $B=\{\beta_j\}_j$ which cuts $\Sigma$ into subsurfaces $\Sigma_i$ with boundary. Denote the inclusion maps by $\iota_i\colon\Sigma_i\hookrightarrow\Sigma$. Each subsurface $\Sigma_i$ comes with a subgroup $\Lambda_i\leq\Mcg(\Sigma_i)$, which produces $\iota_{i,\star}(\Lambda_i)\leq\Mcg(\Sigma)$. We are interested in the subgroup $\Lambda=\left\langle\iota_{i,\star}(\Lambda_i)\right\rangle$ of $\Mcg(\Sigma)$ generated by them. Algebraically, $\Lambda$ is isomorphic to the quotient of the direct product $\prod_i\Lambda_i$ by the elements of the form $T_{j,+}T_{j,-}^{-1}$, where $T_{j,\pm}$ are Dehn twists about the boundary $\beta_j$ in the subsurfaces on two sides of $\beta_j$. Note that boundary twists $T_{j,\pm}$ are central in Mapping Class Groups of the corresponding subsurfaces.

Let us explain the assumption of $\Lambda_i\to\Out(\pi_1\Sigma_i)$ having CSP in different words, which some readers may find clearer. According to Definition \ref{def:csp}, the meaning of this statement is that every finite-index $\Gamma\leq\Lambda_i$, such that $(\Lambda_i\setminus\Gamma)$ contains no boundary multitwists, is congruence. A way to state this without the annoying kernel is to consider the surface $\overline{\Sigma}_i$ obtained from $\Sigma_i$ by capping each boundary component with a once-punctured disc, and to look at the image $\overline{\Lambda}_i$ of $\Lambda_i$ in $\Mcg(\overline{\Sigma}_i)$. Then the assumption is equivalent to saying that every finite-index subgroup of $\overline{\Lambda}_i$ is congruence with respect to $\overline{\Lambda}_i\to\Out(\pi_1\overline{\Sigma}_i)$.

\subsection{Boundary twists}\label{ssec:bdry_twists}

Subsurface subgroups are amalgamated along the boundary twists. To take care of them we will use the following lemma.
\begin{lemma}\label{lmm:csp_multitwist}
Let $\mathcal{B}$ be a collection of disjoint simple closed curves on the surface $\Sigma$, and $\Lambda$ be a subgroup of $\Mcg(\Sigma)$ generated by an arbitrary collection of multitwists about curves from $\mathcal{B}$. Then $\Lambda\to\Out(\pi_1\Sigma)$ has the CSP.
\end{lemma}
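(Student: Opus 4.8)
After discarding inessential curves and repeated isotopy classes, we may assume $\mathcal{B}$ is a multicurve each of whose Dehn twists $T_\gamma$ has infinite order in $\Mcg(\Sigma)$; then the multitwists about $\mathcal{B}$ form a finite-rank free abelian subgroup $\mathbb{Z}^{\mathcal{B}}\leq\Mcg(\Sigma)$, and $\Lambda$ is a subgroup of it. Since $\Sigma$ has no boundary, Dehn--Nielsen--Baer makes $\Mcg(\Sigma)\to\Out(\pi_1\Sigma)$ injective, so $\ker(\Lambda\to\Out(\pi_1\Sigma))$ is trivial and the task is to show that every finite-index subgroup of $\Lambda$ is a congruence subgroup. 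My first reduction is to the \emph{full} multitwist group $\mathbb{Z}^{\mathcal{B}}$: given a finite-index $\Gamma\leq\Lambda$, splitting off the torsion of the finitely generated abelian group $\mathbb{Z}^{\mathcal{B}}/\Gamma$ produces a finite-index $\widetilde{\Gamma}\leq\mathbb{Z}^{\mathcal{B}}$ with $\widetilde{\Gamma}\cap\Lambda=\Gamma$, and a principal congruence subgroup of $\mathbb{Z}^{\mathcal{B}}$ contained in $\widetilde{\Gamma}$ then meets $\Lambda$ in a congruence subgroup contained in $\Gamma$. So it is enough to prove the lemma for $\Lambda=\mathbb{Z}^{\mathcal{B}}$.

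The second reduction isolates one curve at a time. A finite-index $\Gamma\leq\mathbb{Z}^{\mathcal{B}}$ contains $M\cdot\mathbb{Z}^{\mathcal{B}}$ for $M=[\mathbb{Z}^{\mathcal{B}}:\Gamma]$, so it suffices to produce, for each $\beta\in\mathcal{B}$ and each $M\in\mathbb{N}_+$, a finite characteristic quotient $q\colon\pi_1\Sigma\onto Q$ such that for every multitwist $\mu=\prod_{\gamma\in\mathcal{B}}T_\gamma^{n_\gamma}$, if $q_\star(\mu)$ is inner then $M\mid n_\beta$; taking the fibre product of these over $\beta\in\mathcal{B}$ yields a quotient whose principal congruence subgroup lies in $M\cdot\mathbb{Z}^{\mathcal{B}}\subseteq\Gamma$. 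Here the word ``characteristic'' may be weakened to ``arbitrary finite quotient'', since replacing $\ker q$ by the intersection of its finitely many $\Aut(\pi_1\Sigma)$-translates of the same index produces a characteristic quotient with a (possibly smaller) principal congruence subgroup.

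The heart of the argument is this single-curve statement. Fix $\beta$ and $M$. The remaining curves of $\mathcal{B}$ are disjoint from $\beta$, hence each lies in one component of $\Sigma\setminus\beta$. I would pass to a finite cover $p\colon\widehat{\Sigma}\to\Sigma$ in which every curve of $\mathcal{B}$ lifts to closed curves and some component $\widehat{\beta}_\circ$ of $p^{-1}(\beta)$ is \emph{non-separating} --- automatic when $\beta$ is already non-separating, and for separating $\beta=A\cap B$ obtained from a surjection $\pi_1\Sigma\to\mathbb{Z}/k$ ($k\geq 2$) restricting onto the first homology of both sides, which re-glues the two connected subcovers along $k\geq 2$ copies of $\beta$. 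Enlarging $\widehat{\Sigma}$ further (non-separation of $\widehat{\beta}_\circ$ survives further covers) I would put the classes of $\widehat{\beta}_\circ$ and of the other lifted components in general position: there should be an integral class in $H_1(\widehat{\Sigma};\mathbb{Z})$ pairing to $1$ with $\widehat{\beta}_\circ$ and to $0$ with every other component of every $p^{-1}(\gamma)$, $\gamma\in\mathcal{B}$. I would then take $Q=\pi_1\Sigma/[\widehat{\pi},\widehat{\pi}]\widehat{\pi}^{M}$, where $\widehat{\pi}\leq\pi_1\Sigma$ defines the cover; this is an extension of the deck group by $H_1(\widehat{\Sigma};\mathbb{Z}/M)$, on which the lift of $\mu$ acts by the product of transvections $x\mapsto x+\sum_\gamma n_\gamma\sum_{c}\langle x,[c]\rangle[c]$, the inner sum over components $c$ of $p^{-1}(\gamma)$. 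Pairing the vanishing of this difference against the class dual to $\widehat{\beta}_\circ$ kills every contribution except $n_\beta[\widehat{\beta}_\circ]$, and primitivity of $[\widehat{\beta}_\circ]$ forces $M\mid n_\beta$ once $\mu$ acts trivially on $H_1(\widehat{\Sigma};\mathbb{Z}/M)$.

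The obstacle is twofold, and it lies entirely in that last step. First, one must actually build the cover: make the chosen (possibly separating) curve homologically visible while keeping the homology classes of the lifts of \emph{all} the disjoint curves of $\mathcal{B}$ in general position, so that the mod-$M$ action pins down $n_\beta$ uncontaminated by the other --- potentially far larger --- twist exponents; I expect to do this by an explicit construction from surjections of the pieces of $\Sigma\setminus\mathcal{B}$ onto finite abelian groups. Second, passing from ``$q_\star(\mu)$ is inner'' to ``the lifted multitwist acts trivially on $H_1(\widehat{\Sigma};\mathbb{Z}/M)$'' requires the usual bookkeeping for the deck group: the inner witness need only act on $H_1(\widehat{\Sigma};\mathbb{Z}/M)$ through a deck transformation, and this ambiguity is removed by first intersecting with a congruence subgroup that trivializes the action on the deck quotient, using that the transvection action is unipotent.
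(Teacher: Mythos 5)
Your route is genuinely different from the paper's. Where you pass to $H_1$ of finite covers of $\Sigma$ and try to read off each exponent $n_\beta$ modulo $M$ from the transvection action on $H_1(\widehat{\Sigma};\mathbb{Z}/M)$, the paper keeps the group $\pi_1\Sigma$ but passes to its class-2 nilpotent quotient $\nicefrac{\pi_1\Sigma}{\gamma_3\pi_1\Sigma}$. Concretely, the paper observes: (i) $\Lambda\to\Out(\mathrm{H}_1(\Sigma;\mathbb{Z}))$ has CSP, because in a suitable basis the image of $\Lambda$ is an abelian group of unitriangular matrices, so every finite-index subgroup contains a mod-$M$ congruence subgroup; (ii) $\Lambda\to\Out(\nicefrac{\pi_1\Sigma}{\gamma_3\pi_1\Sigma})$ is injective, by detecting nonseparating twists on $H_1$ and separating twists one level further down; then Theorem~\ref{thm:csp_nilpotent} upgrades (i) to CSP for $\Lambda\to\Out(\nicefrac{\pi_1\Sigma}{\gamma_3\pi_1\Sigma})$, which via (ii) gives CSP for $\Lambda\to\Out(\pi_1\Sigma)$. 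Your approach would give an independent argument that avoids Theorem~\ref{thm:csp_nilpotent} entirely, which is conceptually interesting, but it trades that theorem for substantial combinatorial work with covers.

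Your reductions to $\Lambda=\mathbb{Z}^{\mathcal{B}}$ and to the single-curve mod-$M$ statement are sound, and the core-of-a-cover remark addresses characteristicness correctly. However, the two ``obstacles'' you name at the end carry essentially all of the mathematical content of your single-curve step, and you have not resolved them. The general-position requirement is genuinely delicate: since the curves in $\mathcal{B}$ are merely disjoint, two of them can be homologous in $\Sigma$, and then you must build a cover in which a single class $\xi\in H^1(\widehat{\Sigma};\mathbb{Z})$ pairs to $1$ with one component of $p^{-1}(\beta)$ and to $0$ with \emph{every} component of \emph{every} $p^{-1}(\gamma')$, $\gamma'\in\mathcal{B}$, uniformly over the (arbitrarily large) other twist exponents. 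You state an expectation of how this should go but give no construction, and it is not obvious that the na\"ive intersection-number cover does the job. Likewise, ensuring the cover is $\Lambda$-equivariant (so multitwists lift) and removing the deck-group ambiguity in the inner witness are both only gestured at. As written, the crux of your argument remains open; the paper's appeal to Theorem~\ref{thm:csp_nilpotent} is exactly what lets it sidestep building such covers by hand.
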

\begin{proof}
Without loss of generality we can assume that elements of $\mathcal{B}$ are pairwise non-homotopic and essential.

Consider the linearised action $\Lambda\to\Out(\tn{H}_1(\Sigma;\mathbb{Z}))$. We claim that it has the CSP. To see this, observe that the image of $\Lambda$ in $\tn{Sp}(\tn{H}_1(\Sigma;\mathbb{Z}))$ is an abelian subgroup generated by a collection of products of commuting transpositions. There exists a basis for $\tn{H}_1(\Sigma;\mathbb{Z})$ in which these transpositions are unitriangular, and matrix multiplication becomes addition of off-diagonal entries. Then, every finite-index subgroup of the image of $\Lambda$ contains the subgroup of matrices congruent to identity modulo some $M\in\mathbb{N}_+$. But this is the principal congruence subgroup induced by the quotient $\tn{H}_1(\Sigma;\tfrac{\mathbb{Z}}{M\mathbb{Z}})$.

We have argued that $\Lambda\curvearrowright\tn{H}_1(\Sigma_1;\mathbb{Z})$ has the CSP. Write $\gamma_3\pi_1\Sigma$ for the third term in the lower central series of $\pi_1\Sigma$, and $\Torr[2]$ for the second term in the Johnson filtration (as in Section \ref{sec:Johnson_homs} and Definition \ref{def:AJ_filtration}). Theorem \ref{thm:csp_nilpotent} tells us that the homomorphism $\nicefrac{\Lambda}{\Torr[2]}\to\Out\left(\nicefrac{\pi_1\Sigma}{\gamma_3\pi_1\Sigma}\right)$ has the CSP.

We claim that the map $\Lambda\to\nicefrac{\Lambda}{\Torr[2]}$ is injective. Assuming this statement, we can deduce that $\Lambda\to\Out\left(\nicefrac{\pi_1\Sigma}{\gamma_3\pi_1\Sigma}\right)$ has the CSP. Since every quotient of $\nicefrac{\pi_1\Sigma}{\gamma_3\pi_1\Sigma}$ is also naturally a quotient of $\pi_1\Sigma$, this implies that $\Lambda\to\Out(\pi_1\Sigma)$ has the CSP and we are done.

It only remains to justify the claim that $\Lambda\cap\Torr[2]$ is trivial. Let $\Delta_N$ and $\Delta_S$ be the subgroups of $\Mcg(\Sigma)$ generated by Dehn twists about non-separating and separating loops in $\mathcal{B}$ respectively. Since $\Lambda\leq\Delta_N\Delta_S$, it is enough to prove that $\Delta_N\Delta_S\cap\Torr[2]$ is trivial. Note that the symplectic representation (which algebraically is the quotient by $\Torr$) is faithful on $\Delta_N$ and annihilates $\Delta_S$, which means that $\Delta_N\Delta_S\cap\Torr[2]\subseteq\Delta_N\Delta_S\cap\Torr\subseteq\Delta_S$. Therefore, it is enough to show that $\Delta_S\cap\Torr[2]$ is trivial. This can be seen by noting that for every separating curve $\beta\in\mathcal{B}$, there exists a non-separating simple closed curve $\alpha$ on $\Sigma$ which has geometric intersection number 2 with $\beta$ and 0 with any other separating curve from $\mathcal{B}$. Then we examine the action of $\Delta_S$ on the class of $\alpha$ in $\nicefrac{\pi_1\Sigma}{\gamma_3\pi_1\Sigma}$, and conclude that it is preserved by every $T_{\beta'}$ for separating $\beta'\in\mathcal{B}\setminus\{\beta\}$, but moved by any nonzero power of $T_\beta$.
\end{proof}

\subsection{Virtual retractions}\label{ssec:virt_retraction}

In this subsection we introduce Lemma \ref{lmm:virt_retraction} that allows us to convert quotients of subsurfaces into quotients of the whole fundamental group.

\begin{lemma}[Virtual retraction]\label{lmm:virt_retraction}
Let $d\geqslant 2$, and $\iota\colon(\Sigma',x)\hookrightarrow(\Sigma,x)$ be an inclusion of pointed connected surfaces. Assume that no component of the complement $\Sigma\setminus\iota(\Sigma')$ is a disc, and not all of them are once-punctured discs. Then $\iota$ factors as
\begin{equation*}\begin{tikzcd}
(\Sigma',x)\arrow[r,"\hat{\iota}"]\arrow[rr,bend right=20,"\iota"] & (\widehat{\Sigma},\hat{x})\arrow[r,"F"] & (\Sigma,x)
\end{tikzcd}\end{equation*}
where $F$ is a Galois covering with deck group $\tfrac{\mathbb{Z}}{d\mathbb{Z}}$, and there exists a continuous retraction $\rho\colon(\widehat{\Sigma},\hat{x})\to(\Sigma',x)$ such that $\rho\circ\hat{\iota}=\textnormal{id}_{\Sigma'}$.
\end{lemma}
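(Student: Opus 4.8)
Since surfaces with boundary are aspherical, the retraction we want amounts to a homomorphism $\pi_1\widehat\Sigma\to\pi_1\Sigma'$ splitting the inclusion, and the cover $F$ is classified by a surjection $\phi\colon\pi_1\Sigma\onto\mathbb{Z}/d\mathbb{Z}$; for $\iota$ to lift to an \emph{embedding} of $\Sigma'$ we need $\phi$ to vanish on $\pi_1\Sigma'$. So the plan is in two steps: construct a suitable $\phi$, then write $\rho$ down. As a preliminary reduction I would absorb every once-punctured-disc component of $\Sigma\setminus\iota(\Sigma')$ into $\Sigma'$: enlarging $\Sigma'$ across these punctured discs does not change $\pi_1$, makes the old $\Sigma'$ a deformation retract of the new one, and a retraction onto the enlargement composed with that deformation retraction is a retraction onto $\Sigma'$ fixing it pointwise. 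After this no complementary component is a disc or a once-punctured disc, and — using the hypothesis that not all were once-punctured discs — the complement $N=\bigsqcup_k N_k$ is still non-empty (without this, $\iota$ would be a $\pi_1$-isomorphism, forcing $d=1$).

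By van Kampen, a homomorphism $\pi_1\Sigma\to\mathbb{Z}/d\mathbb{Z}$ killing $\pi_1\Sigma'$ is a compatible system of homomorphisms $\pi_1 N_k\to\mathbb{Z}/d\mathbb{Z}$ killing the gluing curves, together with arbitrary values on the stable letters of the annular pieces. Capping the gluing curves of any non-annular $N_k$ yields a surface which is neither a sphere nor a disc, so it has infinite $H_1$ and surjects onto $\mathbb{Z}/d\mathbb{Z}$; I would pick such a surjection for each non-annular piece and send every annular stable letter to a generator of $\mathbb{Z}/d\mathbb{Z}$. This gives a surjective $\phi$ killing $\pi_1\Sigma'$, arranged so that every $F^{-1}(N_k)$ is connected (for the annuli, the $d$ lifts cyclically join all sheets). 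Let $F\colon\widehat\Sigma\to\Sigma$ be the resulting $\mathbb{Z}/d\mathbb{Z}$-Galois cover; since $\phi$ kills $\pi_1\Sigma'$, the subsurface $\iota(\Sigma')$ lifts to $d$ disjoint copies $\Sigma'_0,\dots,\Sigma'_{d-1}$ permuted by the deck group, and I set $\hat\iota(\Sigma')=\Sigma'_0$.

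Now set $W=\widehat\Sigma\setminus\operatorname{int}\Sigma'_0$ and define $\rho$ to be $\hat\iota^{-1}$ on $\Sigma'_0$ and a fixed homeomorphism onto $\Sigma'$ on each other sheet. This already prescribes $\rho$ on $\partial W$: a circle glued to a sheet must land on the boundary curve of $\Sigma'$ it covers, while circles over punctures of $\Sigma$ stay free. By asphericity of $\Sigma'$ and freeness of $\pi_1$ of each component of $W$, extending $\rho$ over $W$ is the algebraic problem of finding, on each component $W_\alpha$, a homomorphism $\pi_1 W_\alpha\to\pi_1\Sigma'$ realising the prescribed maps on boundary loops. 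By the choice of $\phi$, a component of $W$ with no free boundary circle contains all the sheets $\Sigma'_1,\dots,\Sigma'_{d-1}$ (the complementary covers link them), hence has genus at least $\operatorname{genus}(\Sigma')$ and its boundary maps onto all of $\partial\Sigma'$. The homomorphism then exists because, in $\pi_1\Sigma'$, the product of the boundary-curve elements (suitably conjugated) is a product of $\operatorname{genus}(\Sigma')$ commutators — a Poincar\'e--Lefschetz relation — which the spare genus of $W_\alpha$ realises, and because the only homological obstruction, that the prescribed boundary classes sum to $0$ in $H_1(\Sigma')$, holds automatically since the total boundary class of $\Sigma'$ vanishes. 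Components carrying a free boundary circle impose no constraint, and once-punctured-disc pieces have been removed, so all cases are covered, and $\rho\circ\hat\iota=\operatorname{id}_{\Sigma'}$ by construction.

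The genuine work is the middle step: showing $\phi$ can be chosen surjective, trivial on $\pi_1\Sigma'$, \emph{and} with the complementary covers connected enough that every obstruction-bearing component of $W$ sees all of $\partial\Sigma'$. This is exactly where the hypotheses on $\Sigma\setminus\iota(\Sigma')$ (no disc, and the preliminary removal of once-punctured discs) enter; the extension argument of the last step is then routine manipulation of free groups and the standard surface relation.
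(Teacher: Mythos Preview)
Your overall architecture---build a $\mathbb{Z}/d\mathbb{Z}$-valued character $\phi$ killing $\pi_1\Sigma'$, lift, then solve an extension problem using the surface relation---matches the paper's, and your algebraic ``product of commutators'' argument is exactly the $\pi_1$ shadow of the paper's topological ``write the complement as $\Sigma'\#\Xi'$, collapse $\Xi'$, fold the double''. The preliminary absorption of once-punctured discs is equivalent to the paper's first retraction $\rho_1$.

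However, there is a genuine gap in your construction of $\phi$. You assert that for every non-annular complementary piece $N_k$, capping the gluing curves yields a surface that is neither a sphere nor a disc. This fails whenever $N_k$ is planar with at most one puncture and at least two gluing curves: for instance a pair of pants glued to $\Sigma'$ along all three boundaries caps to a sphere, and a once-punctured annulus glued along both boundaries caps to a disc. In such cases the boundary curves already generate $H_1(N_k)$, so $\phi|_{\pi_1 N_k}$ is forced to be trivial and $F^{-1}(N_k)$ is $d$ disjoint copies. Since you only assign nonzero values to stable letters coming from \emph{annular} pieces, these extra copies do not link distinct sheets; one of them, say $N_k^{(0)}$, is then an entire component of $W$ with no free boundary and genus zero. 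Extending $\rho$ over it would require the product of the three (or more) corresponding boundary elements of $\Sigma'$ to be trivial in $\pi_1\Sigma'$, which is false in general. In the extreme case where every $N_k$ is of this type, your $\phi$ is identically zero and the cover is not even connected.

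The paper's proof singles out exactly this situation as the ``$\Sigma^B$'' case and, instead of looking for a class in $H^1(N_k)$, takes the intersection number with one of the shared boundary curves $\beta^B_i$---equivalently, sends one of the stable letters of such a piece to a generator, precisely as you do for annuli. With that fix your argument goes through; but as written, the dichotomy ``annular versus non-annular with infinite capped $H_1$'' is not exhaustive, and the proof breaks at that point.
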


\begin{figure}[p]
\begin{minipage}{.5\textwidth}\begin{center}
\begin{tikzpicture}[scale=.94]
\draw[dashed,fill=black!10!white] (0,2) to[bend left] (0,1) to[bend left] (0,2);
\draw[dash pattern={on 2pt off 2pt},fill=black!10!white] (0,0) to[bend left] (0,-.4) to[bend left] (0,0) (0,-.8) to[bend left] (0,-1.2) to[bend left] (0,-.8) (0,-1.6) to[bend left] (0,-2) to[bend left] (0,-1.6);
\draw[dashed,fill=black!10!white] (0,-4) to[bend left] (0,-3) to[bend left=20] (0,-4);
\draw[dashed,fill=black!10!white] (-2,-4) to[bend left] (-1,-4) to[bend left] (-2,-4);
\draw[fill=lightgray] (0,0) to[bend left=50] (0,1) to[bend left] (0,2) to[bend right] (-2,-4) to[bend left] (-1,-4)--(0,-4) to[bend left] (0,-3) to[bend left=50] (0,-2) to[bend left] (0,-1.6) ..controls (-.3,-1.6) and (-.3,-1.2).. (0,-1.2) to[bend left] (0,-.8) ..controls (-.3,-.8) and (-.3,-.4).. (0,-.4) to[bend left] (0,0);
\draw[fill=white] (-1,-1) to[bend left] (-1,-3) to[bend left] (-1,-1);\draw (-1,-1)--+(.15,.2) (-1,-3)--+(.15,-.2);
\draw[fill] (-.5,0) circle (.05) node[anchor=south] {$x$};
\draw (-1,-4) arc (0:-180:.5);
\draw (0,1) .. controls (2,0) and (2,3) .. (0,2);
\draw (.5,1.5) to[bend left] (1,1.5) (.4,1.6) to[bend right=60] (1.1,1.6);
\draw (0,-.4) arc (90:-90:.2) (0,-1.2) arc (90:-90:.2) (0,-2) arc (-90:90:1);
\draw[red] (.94,1.45) to[bend left] (1.29,1) (.18,-.5) to[bend right] (.7,-.3);\draw[red,dashed] (.94,1.45) to[bend right] (1.29,1) (.18,-.5) to[bend left] (.7,-.3);
\draw[fill] (-1.5,-4.35) circle (.05);
\draw (0,-4) arc (-90:90:.5);
\draw[red] (.2,-3.2) to[bend left=80] (.2,-3.8);
\draw[fill] (.2,-3.8) circle (.05) (.2,-3.2) circle (.05);
\node at (-1.8,.8) {$\Sigma'$};\node at (1,2.5) {$\Sigma^{G}$};\node at (1.3,-1.5) {$\Sigma^B$};\node at (-.8,-4.4) {$\Sigma^D$};\node at (.5,-2.7) {$\Sigma^P$};\node[red] at (1.5,.9) {$\beta^G$};\node[red] at (1,-.1) {$\beta^B$};\node[red] at (.8,-3.5) {$\beta^P$};
\end{tikzpicture}
\end{center}\end{minipage}
\hfill
\begin{minipage}{.4\textwidth}\begin{center}
\begin{tikzpicture}[scale=.94]
\draw[dashed,fill=black!10!white] (0,2) to[bend left] (0,1) to[bend left] (0,2);
\draw[fill=black!10!white] (-2,-4) to[bend left] (-1,-4) to[bend left] (-2,-4);
\draw[dashed,fill=black!10!white] (0,-3) to[bend left] (0,-4) to[bend left] (0,-3);
\draw[dash pattern={on 2pt off 2pt},fill=black!10!white] (0,0) to[bend left] (0,-.4) to[bend left] (0,0) (0,-.8) to[bend left] (0,-1.2) to[bend left] (0,-.8) (0,-1.6) to[bend left] (0,-2) to[bend left] (0,-1.6);
\draw[fill=lightgray] (0,0) to[bend left=50] (0,1) to[bend left] (0,2) to[bend right] (-2,-4) to[bend left] (-1,-4)--(0,-4) to[bend left] (0,-3) to[bend left=50] (0,-2) to[bend left] (0,-1.6) ..controls (-.3,-1.6) and (-.3,-1.2).. (0,-1.2) to[bend left] (0,-.8) ..controls (-.3,-.8) and (-.3,-.4).. (0,-.4) to[bend left] (0,0);
\draw[fill=white] (-1,-1) to[bend left] (-1,-3) to[bend left] (-1,-1);\draw (-1,-1)--+(.15,.2) (-1,-3)--+(.15,-.2);
\draw[fill] (-.5,0) circle (.05) node[anchor=south] {$x$};
\draw (0,0) to[bend right=50] (0,1) (0,-2) to[bend left=50] (0,-3) (0,-4)--(1,-4) to[bend left] (2,-4) (2,-4) edge[out=90,in=-90] (1.5,.5) (1,1) edge[out=180,in=-30] (0,2) (0,-1.6) ..controls (.3,-1.6) and (.3,-1.2).. (0,-1.2) (0,-.8) ..controls (.3,-.8) and (.3,-.4).. (0,-.4);
\draw (1,-4) to[bend right] (2,-4);
\draw (1,-1) to[bend left] (1,-3) to[bend left] (1,-1);\draw (1,-1)--+(-.15,.2) (1,-3)--+(-.15,-.2);
\draw (1,1) to[bend right] (1.5,.5);\draw[dashed] (1,1) to[bend left] (1.5,.5);
\draw (1,1) edge[out=80,in=-120] (1.9,2) (2.5,1.4) edge[out=-90,in=0] (1.5,.5) (2.1,2) edge[out=-90,in=91] (2.5,1.4);
\draw (1.9,2) to[bend left] (2.1,2) to[bend left] (1.9,2);
\draw (1.9,1.4) to[bend left] (2.1,1.4) (1.85,1.41) to[bend right] (2.15,1.41) (1.3,1.2) to[bend left] (1.5,1.2) (1.25,1.21) to[bend right] (1.55,1.21) (1.7,1.2) to[bend left] (1.9,1.2) (1.65,1.21) to[bend right] (1.95,1.21) (2.1,1.2) to[bend left] (2.3,1.2) (2.05,1.21) to[bend right] (2.35,1.21) (1.5,1.4) to[bend left] (1.7,1.4) (1.45,1.41) to[bend right] (1.75,1.41) (1.9,1) to[bend left] (2.1,1) (1.85,1.01) to[bend right] (2.15,1.01);
\draw[fill] (1.5,.9) circle (.02) (1.7,.8) circle (.02);
\node at (-1.8,.8) {$\Sigma'$};\node at (2.5,.7) {$\Xi'$};\node at (2.2,-1) {\reflectbox{$\Sigma'$}};
\end{tikzpicture}
\end{center}\end{minipage}
\begin{minipage}{.47\textwidth}
(\textsc{a}) Inclusion $\iota\colon(\Sigma',x)\hookrightarrow(\Sigma,x)$ of $\Sigma'$ of genus 1 with 6 boundary components into $\Sigma$ of genus 4 with 3 punctures. The complement is a torus with boundary $\Sigma^G$, a pair of pants $\Sigma^B$, and once and twice punctured discs $\Sigma^D$, $\Sigma^P$.
\end{minipage}
\hfill
\begin{minipage}{.47\textwidth}
(\textsc{c}) The surface $\Xi$, obtained from $\widehat{\Sigma}$ by retracting punctured discs via $\rho_1$. Collapsing $\Xi'$ to a point via $\rho_2$ gives a double of $\Sigma'$ glued along some boundary components. Resulting $\Sigma'\cup$\reflectbox{$\Sigma'$} then retracts onto $\Sigma'$ via $\rho_3$.
\end{minipage}
\begin{minipage}{\textwidth}
\hfill
\begin{tikzpicture}[scale=.94]
\draw[dashed,fill=black!10!white] (0,2) to[bend left] (0,1) to[bend left] (0,2) (0,-3) to[bend left] (0,-4) to[bend left] (0,-3);
\draw[dash pattern={on 2pt off 2pt},fill=black!10!white] (0,0) to[bend left] (0,-.4) to[bend left] (0,0) (0,-.8) to[bend left] (0,-1.2) to[bend left] (0,-.8) (0,-1.6) to[bend left] (0,-2) to[bend left] (0,-1.6);
\draw[dashed,fill=black!10!white] (-2,-4) to[bend left] (-1,-4) to[bend left] (-2,-4);
\draw[fill=lightgray] (0,0) to[bend left=50] (0,1) to[bend left] (0,2) to[bend right] (-2,-4) to[bend left] (-1,-4)--(0,-4) to[bend left] (0,-3) to[bend left=50] (0,-2) to[bend left] (0,-1.6) ..controls (-.3,-1.6) and (-.3,-1.2).. (0,-1.2) to[bend left] (0,-.8) ..controls (-.3,-.8) and (-.3,-.4).. (0,-.4) to[bend left] (0,0);
\draw[fill=white] (-1,-1) to[bend left] (-1,-3) to[bend left] (-1,-1);\draw (-1,-1)--+(.15,.2) (-1,-3)--+(.15,-.2);
\draw[fill] (-.5,0) circle (.05) node[anchor=south] {$\hat{x}$};
\draw (-1,-4) arc (0:-180:.5);
\draw[fill] (-1.5,-4.35) circle (.05);
\draw (5,-4) arc (-180:0:.5);
\draw[fill] (5.5,-4.35) circle (.05);
\draw (4,0) to[bend right=50] (4,1) to[bend right] (4,2) to[bend left] (6,-4) to[bend right] (5,-4)--(4,-4) to[bend right] (4,-3) to[bend right=50] (4,-2) to[bend right] (4,-1.6) ..controls (4.3,-1.6) and (4.3,-1.2).. (4,-1.2) to[bend right] (4,-.8) ..controls (4.3,-.8) and (4.3,-.4).. (4,-.4) to[bend right] (4,0);
\draw[dashed] (5,-4) to[bend right] (6,-4) (4,1) to[bend left] (4,2);
\draw[dash pattern={on 2pt off 2pt}] (4,-.4) to[bend left] (4,0) (4,-1.2) to[bend left] (4,-.8) (4,-2) to[bend left] (4,-1.6);
\draw (5,-1) to[bend left] (5,-3) to[bend left] (5,-1);\draw (5,-1)--+(-.15,.2) (5,-3)--+(-.15,-.2);
\draw (0,1)--(4,1) (0,2)--(1,2) to[bend right=20] (2,2)--(4,2);
\draw[dashed] (1,2) to[bend left] (2,2);
\draw (.5,1.5) to[bend left=15] (3.5,1.5) (3.65,1.55)--(3.5,1.5) to[bend left=15] (.5,1.5)--(.35,1.55);
\draw[red] (.5,2) to[bend left] (.8,1.58) (3.5,2) to[bend right] (3.2,1.58) (2,1) to[bend right] (2,1.27);
\draw[red,dash pattern={on 2pt off 2pt}] (.5,2) to[bend right] (.8,1.58) (3.5,2) to[bend left] (3.2,1.58) (2,1) to[bend left] (2,1.27);
\draw (0,0) edge[out=0,in=-170] (2,.5) (0,-.4) edge[out=0,in=-170] (2,-.1) (2,.3) ..controls (1.6,.1).. (2,.1);
\draw (3.2,0)--(4,0) (3.2,-.2) edge[out=-60,in=135] (4,-1.2) (4,-.4) ..controls (3.7,-.4).. (4,-.8);
\draw (0,-.8) edge[out=-10,in=180] (4,-1.6) (0,-2)--(4,-2) (0,-1.2) ..controls (.8,-1.45).. (0,-1.6);
\draw[red] (.5,.05) to[bend left] (.6,-.37) (3.1,-1.54) to[bend right] (3,-2) (3.6,0) to[bend left] (3.4,-.5);
\draw[red, dashed] (.5,.05) to[bend right](.6,-.37) (3.1,-1.54) to[bend left] (3,-2) (3.6,0) to[bend right] (3.4,-.5);
\draw[dashed] (4,-4) to[bend left] (4,-3) (1.5,-2)--(1.5,-1.16) (2,-2)--(2,-1.3);
\draw[red] (1.5,-2.7)--(2,-3)--(2.3,-2.7) (2,-3)--(2,-4);\draw[red,dashed] (1.5,-2.7) to[bend right] (2,-4) to[bend right] (2.3,-2.7);
\draw (0,-4)--(4,-4) (0,-3)--(1,-3) edge[out=0,in=-90] (1.5,-2.7) (1.5,-2.7)--(1.5,-2) (2,-2) edge[out=-90,in=135] (2.3,-2.7) (2.3,-2.7) edge[out=-45,in=180] (3,-3) (3,-3)--(4,-3) (1.5,-1.16) edge[out=90,in=180] (2.4,-.5) (2,-1.3) edge[out=90,in=180] (2.4,-1);
\draw[fill] (2,-3) circle (.05) (2,-4) circle (.05);
\draw (1,2) edge[out=45,in=100] (3,2) (2,2) edge[out=45,in=100] (2.5,2);
\draw[dash pattern={on 1pt off 1pt}] (2.5,2)--(2.5,1.7) (2.5,1.3)--(2.5,1) (3,2)--(3,1.61) (3,1.39)--(3,1);
\draw (2.5,1.7)--(2.5,1.3) (3,1.61)--(3,1.39);
\draw (2.5,1) edge[out=-90,in=0] (2,.5) (2,.5) to[bend left] (2,.3) to[bend left=60] (2,.1) to[bend left] (2,-.1) (2,-.1) edge[out=0,in=90] (2.5,-.3) (2.5,-.3) edge[out=-90,in=0] (2.4,-.5) (2.4,-.5) to[bend left] (2.4,-1)--(2.5,-1) to[bend left] (3,-1) (3,-1) to[bend left] (3.2,-.2) to[bend left] (3.2,0) (3.2,0) edge[out=120,in=-90] (3,1);
\draw (2.75,.7) to[bend right] (2.75,.1) (2.73,.8) to[bend left] (2.73,0);
\draw[dash pattern={on 1pt off 1pt}] (2,.5) to[bend right] (2,.3) (2,.1) to[bend right] (2,-.1) (3.2,-.2) to[bend right] (3.2,0);
\draw[dashed] (2.5,-1) to[bend right] (3,-1);
\draw (3,-1) arc (0:-180:.25);
\draw[fill] (2.75,-1.17) circle (.02);
\node at (-1.8,.8) {$\hat{\iota}(\Sigma')$};\node at (6,.8) {$t^2.\hat{\iota}(\Sigma')$};\node at (3.5,.5) {\tiny $t.\hat{\iota}(\Sigma')$};
\node at (-.8,-4.4) {$\widehat{\Sigma}^D$};\node at (4.6,-4.4) {$t^2.\widehat{\Sigma}^D$};\node at (3.1,-1.3) {\tiny $t.\widehat{\Sigma}^D$};
\node at (2,1.5) {\small $\widehat{\Sigma}^G$};
\node at (1.5,-.5) {\small $\widehat{\Sigma}^B$};\node at (3.35,-.95) {\tiny $t.\widehat{\Sigma}^B$};\node at (.6,-2.25) {$t^2.\widehat{\Sigma}^B$};
\node at (3.2,-4.3) {$\widehat{\Sigma}^P$};
\node[red] at (.5,2.3) {$t.\hat{\beta}^G$};\node[red] at (3.5,2.3) {$t^2.\hat{\beta}^G$};\node[red] at (2,.8) {\tiny $\hat{\beta}^G$};\node[red] at (.5,.3) {\small $\hat{\beta}^B$};\node[red] at (3.7,.18) {\tiny $t.\hat{\beta}^B$};\node[red] at (3.4,-2.3) {$t^2.\hat{\beta}^B$};\node[red] at (1,-2.7) {$t^2.\hat{\beta}^P$};\node[red] at (2.6,-2.5) {$\hat{\beta}^P$};\node[red] at (2,-4.3) {$t.\hat{\beta}^P$};
\end{tikzpicture}
\hfill\ \\
(\textsc{b}) The cover $\widehat{\Sigma}$ of $\Sigma$ with deck group $\langle t|t^3\rangle$. Preimages of $\hat{\iota}(\Sigma'),\Sigma^D,\Sigma^B$ consist of 3 disjoint copies of them. Preimages of $\Sigma^G$, $\Sigma^P$ are cyclic covers (the latter is branched). Retracting $\widehat{\Sigma}^D,t.\widehat{\Sigma}^D,t^2.\widehat{\Sigma}^D$ onto their boundaries gives $\Xi$.
\end{minipage}
\caption{Illustration of the construction in Lemma \ref{lmm:virt_retraction}. Indices $i,j,k,l$ are omitted for brevity.}
\label{fig:virt_retraction}
\end{figure}

\begin{proof}[Proof of Lemma \ref{lmm:virt_retraction}]

\smallskip
\textbf{Part 1:} \textit{Covering $F\colon(\widehat{\Sigma},\hat{x})\to(\Sigma,x)$.}
\smallskip

Denote the components of the complement $\Sigma\setminus\Sigma'$ that share at least 2 boundary components with $\Sigma'$ by $\Sigma^B_i$. Divide the remaining ones into once-punctured discs $\Sigma^D_j$, multiple-punctured components $\Sigma^P_k$, and all other $\Sigma^G_l$. By assumption, every $\Sigma^G_l$ has positive genus. An example of this setting is depicted on Figure \ref{fig:virt_retraction}(\textsc{a}).

For each $\Sigma^B_i$, pick one of its boundary components $\beta^B_i$ (perturbed away from $\Sigma'$). On each $\Sigma^P_k$ pick a simple arc $\beta^P_k$ between two distinct punctures and disjoint from $\Sigma'$. For each $l$, pick some essential nonseparating simple closed curve $\beta^G_l$ on $\Sigma^G_l$ that is disjoint from $\Sigma'$; it exists because their genera are positive. Endow each $\beta_m\in\{\beta^B_i\}\cup\{\beta^P_k\}\cup\{\beta^G_l\}$ with an arbitrary orientation, and let $\Theta_m\colon\pi_1\Sigma\to\tfrac{\mathbb{Z}}{d\mathbb{Z}}$ be the intersection number with $\beta_m$ modulo $d$. All such $\beta_m$ are nonseparating in $\Sigma$, so each $\Theta_m$ is surjective. Define $\Theta=\sum_m\Theta_m$, and take $(\widehat{\Sigma},\hat{x})$ to be the covering space associated to $\ker\Theta$.

We know that $\Theta$ is surjective, because we said each $\Theta_m$ is (and there is at least one $m$ by non-degeneracy assumptions). Therefore $\widehat{\Sigma}$ is a Galois covering with deck group $\tfrac{\mathbb{Z}}{d\mathbb{Z}}$. Each $\beta_m$ is disjoint from $\iota(\Sigma')$, so $\Theta$ vanishes on $\pi_1(\Sigma',x)$. This means that the original inclusion $\iota\colon(\Sigma',x)\hookrightarrow(\Sigma,x)$ lifts to inclusion $\hat{\iota}\colon(\Sigma',x)\hookrightarrow(\widehat{\Sigma},\hat{x})$ into the covering space. Further, the preimage $F^{-1}(\Sigma')$ of $\Sigma'$ in $\widehat{\Sigma}$ consists of $d$ disjoint copies of $\Sigma'$. The cover $\widehat{\Sigma}$ with $d=3$ for an earlier example is shown in Figure \ref{fig:virt_retraction}(\textsc{b}).

\smallskip
\textbf{Part 2:} \textit{Retraction $\rho\colon\widehat{\Sigma}\to\Sigma'$.}
\smallskip

First, the preimage of each $\Sigma^D_j$ is a disjoint union of $d$ copies of it. Let $\rho_1\colon\widehat{\Sigma}\to\Xi$ retract each of them into its boundary. By abuse of notation, we treat $\hat{\iota}(\Sigma')$ also as a subsurface of $\Xi$.

Second, $\Xi\setminus\hat{\iota}(\Sigma')$ is connected. Indeed, its every point can be connected to one of the $d-1$ components of $F^{-1}(\Sigma')\setminus\hat{\iota}(\Sigma')$ by a path disjoint from $\hat{\iota}(\Sigma')$, each of these components is connected, and any two of them can again be joined by a path disjoint from $\hat{\iota}(\Sigma')$.

Third, $\Xi\setminus\hat{\iota}(\Sigma')$ contains at least one embedded copy of $\Sigma'$. Therefore its genus and number of punctures is at least that of $\Sigma'$. Similarly, $\Xi\setminus\hat{\iota}(\Sigma')$ contains at least as many components of $\partial\Xi$ as $\hat{\iota}(\Sigma')$ does (these came from the retraction $\rho_1$ earlier). By the classification of surfaces, $\Xi\setminus\hat{\iota}(\Sigma')$ is homeomorphic to a connect sum of a copy of $\Sigma'$, which we denote by \reflectbox{$\Sigma'$} (read ``mirror of $\Sigma'$''), and some other surface $\Xi'$. Furthermore, this homeomorphism is compatible with the identification of the common boundaries with $\hat{\iota}(\Sigma')$. Figure \ref{fig:virt_retraction}(\textsc{c}) shows surface $\Xi$ from this viewpoint.

Finally, let $\rho_2\colon\Xi\to\hat{\iota}(\Sigma')\cup$\reflectbox{$\Sigma'$} collapse $\Xi'$ to a point. Then let $\rho_3\colon\hat{\iota}(\Sigma')\cup$\reflectbox{$\Sigma'$}$\to\Sigma'$ be identity on $\hat{\iota}(\Sigma')$ and send \reflectbox{$\Sigma'$} to $\Sigma'$.

We define $\rho=\rho_3\circ\rho_2\circ\rho_1$. This is a composition of continuous functions, so is continuous. Furthermore, each of $\rho_1,\rho_2,\rho_3$ restricted to the suitable included copy of $\Sigma'$ is a homeomorphism onto its image, so $\rho\circ\hat{\iota}$ is the identity on $\Sigma'$.
\end{proof}

\subsection{Proof of Theorem \ref{thm:csp_subsurface}}\label{ssec:pf_csp_subsurface}

Before the full Theorem \ref{thm:csp_subsurface} we prove its weakening Proposition \ref{prop:subsurface_containment}.
\begin{proposition}\label{prop:subsurface_containment}
Suppose $\Sigma'\hookrightarrow\Sigma$ is an inclusion of subsurfaces such that no component of $\Sigma\setminus\Sigma'$ is a disc with zero or one puncture (and this complement is non-empty). Assume $\Lambda\leq\Mcg(\Sigma')$ is a subgroup such that $\Lambda\to\Out(\pi_1\Sigma')$ has the CSP. Then for any finite-index subgroup $\Gamma\leq\Lambda$, such that $\Lambda\setminus\Gamma$ contains no multitwists about $\partial\Sigma'$, there exists a finite characteristic quotient $q\colon\pi_1\Sigma\onto Q$ such that all mapping classes from $(\Lambda\setminus\Gamma)\cdot\Mcg\left(\Sigma\setminus\Sigma'\right)$ are non-trivial in $\Out(Q)$.
\end{proposition}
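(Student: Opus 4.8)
Write $P=\pi_1\Sigma$ and $P'=\iota_\star\pi_1\Sigma'\leqslant P$. The plan is to build $Q$ by pushing the Congruence Subgroup Property of $\Lambda$ on $\Sigma'$ through the virtual retraction of Lemma~\ref{lmm:virt_retraction}, and then to deduce the required non-triviality from an almost-malnormality property of the image of $\pi_1\Sigma'$ in $Q$. First I would extract a useful quotient of $\pi_1\Sigma'$: the hypothesis that $\Lambda\setminus\Gamma$ contains no multitwist about $\partial\Sigma'$ says exactly that $\Gamma$ contains $\ker(\Lambda\to\Out(\pi_1\Sigma'))$, so by the CSP of this homomorphism $\Gamma$ is a congruence subgroup and hence contains a principal congruence subgroup; that is, there is a finite characteristic quotient $q'\colon\pi_1\Sigma'\onto Q'$ such that every element of $\Lambda\setminus\Gamma$ acts non-trivially on $Q'$. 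Replacing $q'$ by a further refinement I may also assume $Q'$ is non-cyclic; the degenerate case where $\Sigma'$ is an annulus is vacuous by the no-multitwist hypothesis (or handled by Lemma~\ref{lmm:csp_multitwist}).

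Next I would invoke Lemma~\ref{lmm:virt_retraction}, whose hypotheses hold because no component of $\Sigma\setminus\Sigma'$ is a disc with at most one puncture. It produces a cyclic cover $F\colon\widehat\Sigma\to\Sigma$, an inclusion $\hat\iota\colon\Sigma'\hookrightarrow\widehat\Sigma$ lifting $\iota$, and a retraction $\rho$ with $\rho\circ\hat\iota=\mathrm{id}_{\Sigma'}$; on fundamental groups this gives $P'\leqslant\widehat P\defeq\pi_1\widehat\Sigma\leqslant P$ of finite index, together with a retraction $\rho_\star\colon\widehat P\onto P'$ restricting to the identity on $P'$. I would then let $q\colon P\onto Q$ be the characteristic quotient obtained by pulling $q'$ back along $\rho_\star$ to a finite quotient of $\widehat P$, passing to the normal core of its kernel in $P$, and then to the characteristic core. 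By construction $q'$ factors through $q|_{P'}$, say $q'=\sigma\circ(q|_{P'})$ with $\sigma\colon\overline{P'}\onto Q'$, where $\overline{P'}\defeq q(P')$. The crucial property, which I would establish by exploiting that $P'$ is a \emph{retract} of the finite-index subgroup $\widehat P$ --- hence a very well-behaved separable subgroup of $P$ --- in combination with the almost-malnormality of incompressible subsurface subgroups of a surface group, is that after one more refinement of $q'$ the image $\overline{P'}$ is almost malnormal in $Q$: for $\bar g\in Q$ the intersection $\bar g\,\overline{P'}\,\bar g^{-1}\cap\overline{P'}$ is either cyclic or else $\bar g\in\overline{P'}$.

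For the separation, suppose $\varphi\in\Lambda\setminus\Gamma$ and $\psi\in\Mcg(\Sigma\setminus\Sigma')$, and that $\varphi\psi$ acts on $Q$ by an inner automorphism $\ad_{\bar g}$. Choosing a basepoint inside $\Sigma'$ and representatives of $\varphi,\psi$ with disjoint support, $\psi$ fixes $P'$ pointwise, so on $P'$ the mapping class $\varphi\psi$ acts exactly as $\varphi$; hence $\varphi$ acts on $\overline{P'}$ as $\ad_{\bar g}$, and in particular $\bar g$ normalises $\overline{P'}$. Then $\bar g\,\overline{P'}\,\bar g^{-1}\cap\overline{P'}=\overline{P'}$ is non-cyclic, since it surjects onto $Q'$ via $\sigma$, so almost-malnormality forces $\bar g\in\overline{P'}$; consequently $\varphi$ acts on $\overline{P'}$ by an inner automorphism, and pushing forward along $\sigma$ shows $\varphi$ acts on $Q'$ by an inner automorphism, contradicting the choice of $q'$. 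It suffices to run this argument for the normal-core quotient before passing to the characteristic core, since triviality in $\Out$ can only persist, not appear, under further quotients.

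The step I expect to be the genuine obstacle is the construction above: producing a finite \emph{characteristic} quotient of $\pi_1\Sigma$ that simultaneously retains the information of $q'$ on $\pi_1\Sigma'$ and makes the image of $\pi_1\Sigma'$ almost malnormal. This is precisely where the virtual retraction earns its place --- a retract of a finitely generated residually finite group is separable at the level of double cosets --- and making it work will require marrying that separability with the known almost-malnormality of subsurface subgroups in surface groups, all while carefully tracking the cyclic boundary exceptions and keeping the final quotient characteristic.
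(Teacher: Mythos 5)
Your construction of the quotient $q\colon\pi_1\Sigma\onto Q$ matches the paper's up to the point where you need to derive a contradiction from ``$\varphi\psi$ acts on $Q$ by $\ad_{\bar g}$.'' There the approaches diverge, and yours has a genuine gap at precisely the spot you flag as ``the genuine obstacle'': establishing that the image $\overline{P'}=q(\pi_1\Sigma')$ is almost malnormal in the finite quotient $Q$. Almost malnormality of a subsurface group inside the ambient \emph{infinite} surface group does not pass to finite quotients in any routine way; double-coset separability of the retract gives you, for any finite set of bad cosets, \emph{some} finite quotient separating them, but here the bad cosets range over all of $Q$ and the quotient must also be refined to be characteristic, and neither of those refinement steps obviously preserves the almost-malnormality you want. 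So the argument as written does not close.

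The paper sidesteps this entirely with a small but decisive arithmetic trick. It first chooses the degree $d$ of the cyclic cover in Lemma~\ref{lmm:virt_retraction} to be coprime to $|\Out(R)|$ (where $R$ is your $Q'$). Since the inner automorphism $\ad_{\bar g}$ must preserve the abelian quotient $\theta\colon Q\to\mathbb{Z}/d\mathbb{Z}$, one cannot conclude $\bar g\in\widehat Q\defeq\ker\theta$, but one \emph{can} conclude $\bar g^d\in\widehat Q$. Taking $d$-th powers throughout, $\Phi^d$ then acts on $\widehat Q$ by conjugation by an element of $\widehat Q$, which pushes forward along the retraction to an inner automorphism of $R$; hence $\varphi^d$ is trivial in $\Out(R)$, and coprimality of $d$ with $|\Out(R)|$ forces $\varphi$ itself to be trivial in $\Out(R)$, i.e.\ $\varphi\in\Gamma$. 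Your correct observation that $\psi$ acts trivially on $\pi_1\Sigma'$ and hence $\ad_{\bar g}$ restricted to $\overline{P'}$ records only the action of $\varphi$ is used in the same way in the paper; the difference is that the paper replaces your unproved almost-malnormality by the $d$-th-power-plus-coprimality argument, which is elementary and requires nothing beyond the structure Lemma~\ref{lmm:virt_retraction} already provides. If you want to salvage your route you would need a genuine argument that the \emph{specific} characteristic $Q$ you built makes $\overline{P'}$ almost malnormal, and that is not supplied.
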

\begin{proof}

\smallskip
\textbf{Step 1:} \textit{Construction of the quotient $(q,Q)$.}
\smallskip

Let $x$ be an auxiliary basepoint in the interior of $\Sigma'$. Note that the kernel of $\Lambda\to\Out(\pi_1\Sigma')$ consists precisely of boundary twists, so is contained in $\Gamma$ by assumption. By another assumption, $\Lambda\to\Out(\Sigma')$ has CSP, so there is a finite characteristic quotient $r\colon\pi_1(\Sigma',x)\onto R$ such that $\ker\left(r_\star|_\Lambda\right)$ is contained in $\Gamma$.

Pick integer $d\geqslant 2$ coprime to $|\Out(R)|$. Lemma \ref{lmm:virt_retraction} tells us that $\pi_1(\Sigma',x)$ sits as a retract inside a normal subgroup $\pi_1(\widehat{\Sigma},\hat{x})\triangleleft\pi_1(\Sigma,x)$ whose quotient is $\tfrac{\mathbb{Z}}{d\mathbb{Z}}$. Denote the retraction map by $\rho\colon\pi_1(\widehat{\Sigma},\hat{x})\to\pi_1(\Sigma',x)$. The composition $r \circ \rho$ makes $R$ into a quotient of the finite-index subgroup $\pi_1(\widehat{\Sigma},\hat{x})$ of $\pi_1(\Sigma,x)$. Induce and refine it to a finite characteristic quotient $q \colon \pi_1(\Sigma,x) \rightarrow Q$.

Denote the image of $\pi_1(\widehat{\Sigma},\hat{x})$ under $q$ by $\widehat{Q}$. So far we have produced the following commutative diagram.
\begin{equation}\label{eq:virt_subsurface_retraction}\begin{tikzcd}[row sep=1ex]
\pi_1(\Sigma',x)\arrow[dd,two heads,"r"]\arrow[r,hook] & \pi_1\left(\widehat{\Sigma},\hat{x}\right)\arrow[r,hook,"F"]\arrow[dd,two heads,"q"]\arrow[ddl,two heads,"r\circ\rho"] & \pi_1\left(\Sigma,x\right)\arrow[rd,two heads,bend left,"\Theta"]\arrow[dd,two heads,"q"] &\\
& & & \frac{\mathbb{Z}}{d\mathbb{Z}}\\
R & \widehat{Q}\arrow[r,hook,"f"]\arrow[l,two heads] & Q\arrow[ru,two heads,bend right,"\theta"] &
\end{tikzcd}\end{equation}
Moreover, $\text{im}\, F=\ker\Theta$ and $\text{im}\, f=\ker\theta$.

\smallskip
\textbf{Step 2:} \textit{Action of $\Lambda\cdot\Mcg(\Sigma\setminus\Sigma')$ on $(q,Q)$.}
\smallskip

Consider $\phi\in\Lambda_i$, $\psi\in\Mcg(\Sigma\setminus\Sigma')$ and suppose that the mapping class $\phi\psi$ becomes trivial in $\Out(Q)$. Proposition \ref{prop:subsurface_containment} will be proven if we show that $\phi\in\Gamma$. Pick representatives $\Phi\in\tn{Homeo}(\Sigma')$, $\Psi\in\tn{Homeo}(\Sigma\setminus\Sigma')$ which preserve $\partial\Sigma'$ pointwise and such that $\Phi$ preserves the basepoint $x$. Then $\Phi\Psi$ induces an (honest) automorphism of the fundamental group $\pi_1(\Sigma,x)$; this automorphism becomes inner $\ad_g\in\text{Aut}(Q)$ for some $g\in Q$ when we pass to the quotient $Q$.

The first problem we face is the interference from $\Psi$. So let us examine what happens to the diagram (\ref{eq:virt_subsurface_retraction}) once we apply $\Phi\Psi$. The inner automorphism $\text{ad}_g$ must preserve the homomorphism $\theta$, because the target is abelian. This means that $\Phi\Psi$ must preserve $\ker\theta=\widehat{Q}$ inside $Q$, the homomorphism $\Theta$, and the subgroup $\ker\Theta=\pi_1(\widehat{\Sigma},\hat{x})$. This gives us the following commutative diagram.
\begin{equation*}\begin{tikzcd}
\pi_1(\Sigma',x)\arrow[r]\arrow[d,"\Phi"] & \pi_1\left(\widehat{\Sigma},\hat{x}\right)\arrow[r]\arrow[d,"\Phi\Psi"] & \widehat{Q}\arrow[d,"\textnormal{ad}_g"] \\
\pi_1(\Sigma',x)\arrow[r] & \pi_1\left(\widehat{\Sigma},\hat{x}\right)\arrow[r] & \widehat{Q}
\end{tikzcd}\end{equation*}
Crucially, the leftmost arrow contains only $\Phi$. This is because $\Psi$ is supported on $\Sigma\setminus\Sigma'$, so it acts trivially on $\pi_1(\Sigma', x)$.

The second problem is that $\text{ad}_g$ need not be inner in $\text{Aut}(\widehat{Q})$ if $g\notin\widehat{Q}$. Thus we take the $d$-th power, $\theta(g^d)=0$ so $g^d\in\widehat{Q}$. Let $h$ be the image of $g^d$ in $R$. We obtain the following diagram.
\begin{equation*}\begin{tikzcd}
\pi_1(\Sigma',x)\arrow[r]\arrow[d,"\Phi^d"]\arrow[rr,two heads,bend left,"r"] & \widehat{Q}\arrow[r]\arrow[d,"\text{ad}_{g^d}"] & R\arrow[d,"\text{ad}_h"] \\
\pi_1(\Sigma',x)\arrow[r]\arrow[rr,two heads,bend right,"r"] & \widehat{Q}\arrow[r] & R
\end{tikzcd}\end{equation*}

Therefore, when we pass to the quotient $r\colon\pi_1(\Sigma',x)\onto R$, the automorphism $\Phi^d$ becomes inner in $\Aut(R)$, or equivalently the image of the mapping class $\phi^d$ in $\Out(R)$ is trivial. Since we chose $d$ coprime to $|\Out(R)|$, this means that $\phi$ must itself be trivial in $\Out(R)$. But by the choice of $R$ we have $\ker r\cap\Lambda\subseteq\Gamma$, so $\phi\in\Gamma$ as desired.
\end{proof}

We are ready to complete the proof of Theorem \ref{thm:csp_subsurface}. This involves combining Proposition \ref{prop:subsurface_containment} with Lemmata \ref{lmm:csp_multitwist} and \ref{lmm:csp_ses}. We recall the statement for the reader's convenience.
\begin{theorem*}[restatement of \ref{thm:csp_subsurface}]
Let $\Sigma$ be a possibly punctured surface with no boundary, and let $\Sigma=\bigcup_i\Sigma_i$ be its decomposition into disjoint subsurfaces glued along boundaries. Suppose that $\Lambda_i\leq\Mcg(\Sigma_i)$ are subgroups supported on the subsurfaces, and let $\Lambda=\prod_i\Lambda_i$ be the subgroup of $\Mcg(\Sigma)$ obtained from them by inclusion. If all $\Lambda_i\to\Out(\pi_1\Sigma_i)$ have the CSP, then $\Lambda\to\Out(\pi_1\Sigma)$ has the CSP.
\end{theorem*}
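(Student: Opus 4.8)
The plan is to deduce the theorem from the composition Lemma~\ref{lmm:csp_ses}, applied to $\lambda\colon\Lambda\to\Out(\pi_1\Sigma)$ with $\Delta\leq\Lambda$ taken to be the group of all multitwists about the gluing curves $\beta_j$ that belong to $\Lambda$; equivalently, $\Delta$ is the kernel of $\Lambda\to\prod_i\overline{\Lambda}_i$, where $\overline{\Lambda}_i$ is the image of $\Lambda_i$ in $\Mcg(\overline{\Sigma}_i)$ after capping $\partial\Sigma_i$ with once-punctured discs. As a subgroup of the finitely generated abelian multitwist group on $\{\beta_j\}$, $\Delta$ is itself generated by multitwists about the $\beta_j$, so Lemma~\ref{lmm:csp_multitwist} shows that $\Delta\to\Out(\pi_1\Sigma)$ has the CSP; this is hypothesis~(\ref*{slmm:ker}) of Lemma~\ref{lmm:csp_ses}. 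The remaining task, hypothesis~(\ref*{slmm:im}), is to show that every finite-index subgroup $\Gamma\leq\Lambda$ with $\Delta\leq\Gamma$ is a congruence subgroup.

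Before that I would pass to a non-degenerate decomposition. We may assume the $\beta_j$ are essential and pairwise non-isotopic in $\Sigma$ (otherwise coarsen the decomposition), and, after absorbing each once-punctured-disc subsurface into an adjacent piece --- an operation that changes neither $\Lambda$, nor $\pi_1\Sigma$, nor the fundamental group of the enlarged subsurface, since capping a boundary curve with a once-punctured disc leaves $\pi_1$ unchanged --- we may assume in addition that no subsurface is a disc, annulus, or once-punctured disc. If only one subsurface survives this reduction, then $\Lambda=\iota_{1,\star}(\Lambda_1)$ up to boundary twists and the conclusion coincides with the hypothesis; otherwise each inclusion $\Sigma_i\hookrightarrow\Sigma$ has non-empty complement, no component of which is a disc or once-punctured disc, so Proposition~\ref{prop:subsurface_containment} is available for it.

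Now take any finite-index $\Gamma$ with $\Delta\leq\Gamma\leq\Lambda$ and, for each $i$, put $\Gamma_i=\Gamma\cap\iota_{i,\star}(\Lambda_i)$, a finite-index subgroup of $\Lambda_i$. Any multitwist about $\partial\Sigma_i$ lying in $\Lambda_i$ is a multitwist about gluing curves, hence lies in $\Delta\subseteq\Gamma$, hence in $\Gamma_i$; so $\Lambda_i\setminus\Gamma_i$ contains no boundary multitwist, and Proposition~\ref{prop:subsurface_containment} produces a finite characteristic quotient $q_i\colon\pi_1\Sigma\onto Q_i$ on which no mapping class from $(\Lambda_i\setminus\Gamma_i)\cdot\Mcg(\Sigma\setminus\Sigma_i)$ is inner. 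Let $q\colon\pi_1\Sigma\onto Q$ be the finite characteristic quotient by $\bigcap_i\ker q_i$; it dominates every $q_i$. I claim that the induced principal congruence subgroup lies in $\Gamma$, i.e. that every $\varphi\in\Lambda$ acting innerly on $Q$ lies in $\Gamma$. Fix such a $\varphi$ and write $\varphi=\prod_i\varphi_i$ with $\varphi_i\in\iota_{i,\star}(\Lambda_i)$ --- possible since these subgroups commute, being supported on subsurfaces with disjoint interiors. For each $i$, the partial product $\prod_{i'\neq i}\varphi_{i'}$ is supported on $\Sigma\setminus\Sigma_i$, so $\varphi\in\varphi_i\cdot\Mcg(\Sigma\setminus\Sigma_i)$; since $\varphi$ is inner on $Q$ it is inner on the quotient $Q_i$, so Proposition~\ref{prop:subsurface_containment} rules out $\varphi_i\in\Lambda_i\setminus\Gamma_i$, forcing $\varphi_i\in\Gamma_i\subseteq\Gamma$. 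As $\Gamma$ is a subgroup, $\varphi=\prod_i\varphi_i\in\Gamma$. Thus $\Gamma$ contains a principal congruence subgroup and is therefore congruence, establishing hypothesis~(\ref*{slmm:im}); Lemma~\ref{lmm:csp_ses} then yields the theorem.

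The real content lies in Proposition~\ref{prop:subsurface_containment} and, beneath it, the virtual-retraction Lemma~\ref{lmm:virt_retraction}, both of which I take as given. In the assembly above the points that require genuine care are the identification of the ambient factor $\Mcg(\Sigma\setminus\Sigma_i)$ with the product of the remaining $\Lambda_{i'}$ inside $\Mcg(\Sigma)$, and the reduction removing disc-, annulus-, and once-punctured-disc pieces so that Proposition~\ref{prop:subsurface_containment} applies to every $\Sigma_i\hookrightarrow\Sigma$ --- this reduction is exactly what makes the retraction construction legal, and I expect it to be the fussiest step.
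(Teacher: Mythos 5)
Your proposal is correct and follows essentially the same strategy as the paper: define $\Delta$ to be the multitwists in $\Lambda$, use Lemma~\ref{lmm:csp_multitwist} for hypothesis~(\ref*{slmm:ker}) of Lemma~\ref{lmm:csp_ses}, and for hypothesis~(\ref*{slmm:im}) apply Proposition~\ref{prop:subsurface_containment} to each $\Gamma_i=\Gamma\cap\Lambda_i$ and combine the resulting quotients, then decompose $\varphi=\prod_i\varphi_i$ to conclude. Your explicit reduction to a non-degenerate decomposition --- coarsening to ensure essential, pairwise non-isotopic gluing curves and absorbing once-punctured-disc pieces so that every complement $\Sigma\setminus\Sigma_i$ satisfies the hypothesis of Proposition~\ref{prop:subsurface_containment} --- is a point the paper's own proof leaves implicit, and you are right to flag it as the fussiest step of the assembly.
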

\begin{proof}[Proof of Theorem \ref{thm:csp_subsurface}]
Let $\Delta'$ be the subgroup of $\Mcg(\Sigma)$ generated by the Dehn twists about boundaries of subsurfaces $\Sigma_i$. Denote $\Delta=\Delta'\cap\Lambda$. By Lemma \ref{lmm:csp_multitwist} we know that $\Delta\to\Out(\pi_1\Sigma)$ has CSP.

Consider any finite-index $\Gamma\leq\Lambda$ containing $\Delta$. By Lemma \ref{lmm:csp_ses}, it is enough to show that $\Gamma$ is a congruence subgroup of $\Lambda$.

Define $\Gamma_i = \Gamma\cap\Lambda_i$. This is a finite-index subgroup of $\Lambda_i$, and every boundary multitwist contained in $\Lambda_i$ is also an element of $\Gamma_i$. By Proposition \ref{prop:subsurface_containment}, there exists a finite characteristic quotient $q_i\colon\pi_1\Sigma\onto Q_i$ such that no mapping class in $(\Lambda_i\setminus\Gamma_i)\cdot\Mcg(\Sigma\setminus\Sigma_i)$ is trivial in $\Out(Q_i)$.

Consider the product $\prod_iq_i\colon\pi_1\Sigma\to\prod_iQ_i$ of the quotients $(q_i,Q_i)$. Let $Q$ be its image, and $q\colon\pi_1\Sigma\onto Q$ the corresponding factorisation.

Suppose $\varphi\in\Lambda$ becomes trivial in $\Out(Q)$. Since $q_i$ factors through $q$, we know that $q_{i,\star}(\varphi)$ is trivial in $\Out(Q_i)$ for each $i$. Write $\varphi=\prod_i\varphi_i$ for $\varphi_i\in\Lambda_i$. From the main property of $Q_i$ we know that $\varphi_i\in\Gamma_i$. Therefore $\varphi\in\prod_i\Gamma_i\subseteq\Gamma$. This means that $\Gamma$ contains the principal congruence subgroups induced by $(q,Q)$, so it is a congruence subgroup.
\end{proof}

\section{Example application: handle pushing subgroups}\label{sec:handlepushing}

The purpose of this section is to introduce handle-pushing subgroups, and explain how they fit in the framework of Theorem \ref{thm:csp_subsurface}. We start from the algebraic side.
\begin{definition}
Denote a closed surface of genus $g$ by $\Sigma_g$, its unordered configuration space of $p$ points by $\tn{Conf}_p$, and its unit tangent bundle by $\tn{UT}\Sigma_g$. Let $C\defeq\tn{Conf}_p(\Sigma_g)\times\left(\tn{UT}\Sigma_g\right)^b$ (this can be thought of as a mix between ordered and unordered configuration spaces, where each of the ``ordered/distinguished'' points carries an attached tangent direction). Define the ``diagonal'' subset as
\begin{equation*}
D\defeq\left\{\big(\{x_1,\dots,x_p\},(x_{p+1},v_{p+1}),\dots,(x_{p+b},v_{p+b})\big)\in C\ \big|\ x_i=x_j\tn{ for some }i\neq j\right\}.
\end{equation*}
The \emph{framed genus-$g$ braid group on $p$ ordinary strands and $b$ distinguished strands}, denoted $B_{g,p,b}$, is the fundamental group of $C\setminus D$.
\end{definition}
Elements of $B_{g,p,b}$ correspond to collisionless motions of $p+b$ points on the surface $\Sigma_g$. A subset of $b$ of points is distinguished, and each distinguished point has a tangent direction attached to itself, which moves continuously with the point (this is sometimes referred to as \emph{framing}). Note that having two distinguished points with the same basepoint but different vectors still counts as a collision and is disallowed. See Figure \ref{fig:example_braid} for an example element of $B_{2,3,1}$. The treatment of framed braid groups in this article only covers the few aspects necessary for our results, for more a detailed discussion see \cite{surf_framed_braids_BellingeriGervais12}.

Forgetting the intermediate movement and only remembering the permutation of the ordinary points gives a surjection $B_{g,p,b}\onto S_p$ onto the symmetric group of $p$ elements. Its kernel is often referred to as \emph{pure} braid group. Rotating the tangent directions while keeping all points fixed gives a copy of $\mathbb{Z}^b$ inside $B_{g,p,g}$, which is central. Forgetting the tangent vectors (framing) turns distinguished strands into ordinary strands. Algebraically, this corresponds to a homomorphism $B_{g,p,b}\to B_{g,p+b,0}$, whose kernel is the aforementioned central copy of $\mathbb{Z}^b$, and whose image coincides with the preimage of $S_p$ under the homomorphism $B_{g,p+b,0}\to S_{p+b}$.

\begin{figure}[h]
\begin{tikzpicture}
\draw plot [smooth cycle] coordinates {(0,-.5) (1,-1) (2,-1) (3,0) (3,1) (2,2) (1,2) (0,1.5) (-1,2) (-3,2) (-4,1) (-4,0) (-3,-1) (-1,-1)};
\draw (1,.5) to[bend left] (2,.5) (.9,.6) to[bend right=60] (2.1,.6);
\draw (-2,.5) to[bend left] (-1,.5) (-2.1,.6) to[bend right=60] (-.9,.6);
\node at (-3.2,1.2) {$a$};\node at (-3.5,.5) {$b$};\node at (-3.2,-.2) {$c$};\node at (-.3,.5) {$d$};
\draw[blue] (-3,1) .. controls (-4.3,-1.5) and (-4.3,2.5) .. (-3,0);
\draw[teal] (-3,0) to[bend right=20] (-1.5,-1.05);\draw[dashed,teal] (-1.5,-1.05)--(-1,.5);\draw[teal] (-1,.5) to[bend right=60] (-3,1);
\node[fill=blue,regular polygon,regular polygon sides=3,inner sep=1.5] at (-3.96,.5) {};
\node[fill=teal,regular polygon,regular polygon sides=3,inner sep=1.5] at (-2.3,-.72) {};
\node[fill=teal,regular polygon,regular polygon sides=3,inner sep=1.5,shape border rotate=80] at (-1.9,1.26) {};
\draw[red] (0,.5) .. controls (3.5,3.3) and (3.5,-2.3) .. (0,.5);
\node[fill=red,regular polygon,regular polygon sides=3,inner sep=1.5,shape border rotate=30] at (1.8,1.3) {};
\node[fill=red,regular polygon,regular polygon sides=3,inner sep=1.5,shape border rotate=90] at (1.8,-.31) {};
\draw[->] (0,.5)--(0,.8);\draw[red,->] (.5,.86)--(.35,1.05);\draw[red,->] (1,1.13)--(.7,1.1);\draw[red,->] (1.4,1.27)--(1.2,1);\draw[red,->] (2,1.28)--(2,1);\draw[red,->] (2.46,1)--(2.5,.7);\draw[red,->] (2.63,.5)--(2.7,.2);\draw[red,->] (2.46,0)--(2.7,-.1);\draw[red,->] (2,-.28)--(2.35,-.25);\draw[red,->] (1.4,-.27)--(1.65,-.2);\draw[red,->] (1,-.13)--(1.2,.1);\draw[red,->] (.5,.14)--(.6,.4);
\draw [fill] (0,.5) circle (.05) (-3,1) circle (.05) (-3.7,.5) circle (.05) (-3,0) circle (.05);
\node[blue] at (-3.6,-.1) {$\alpha$};\node[teal] at (-1.5,-.3) {$\gamma$};\node[red] at (1.7,-.7) {$\delta$};
\end{tikzpicture}
\caption{An example element of a framed genus 2 braid group on 3 ordinary strands ($a,b,c$) and 1 distinguished strand ($d$). Ordinary points $a$ and $c$ travel along {\color{blue}$\alpha$} and {\color{teal}$\gamma$} respectively, and distinguished point $d$ together with its tangent vector travel along {\color{red}$\delta$}, in the direction indicated by solid triangles. The image of this braid in $S_3$ is the transposition swapping $a,c$. Note that the motion of $d$ is homotopic to rotating the vector counterclockwise while keeping the point in place, followed by moving $d$ across $\delta$ while keeping the direction of the vector constantly pointing up (in the plane of the picture).}
\label{fig:example_braid}
\end{figure}
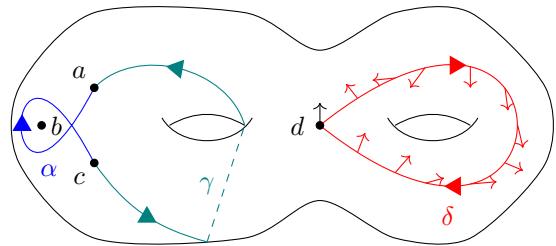

Each braid induces a mapping class, on a surface where ordinary points are replaced with punctures and distinguished points are replaced with boundaries.
\begin{definition}\label{def:pt_push}
Let $\Sigma_{g,p,b}$ be a surface of genus $g$ with $p$ punctures and $b$ boundary components. The \emph{point-pushing homomorphism} is the map $B_{g,p,b}\to\Mcg(\Sigma_{g,p,b})$ obtained by ``pushing'' the punctures along ordinary strands, ``pushing'' boundary components along the distinguished strands, in a way that the tangent direction keeps track of the rotation of the boundary component, and ``dragging'' the rest of the surface along.
\end{definition}
For the sake of brevity, we will not make Definition \ref{def:pt_push} any more formal, and instead just draw a pictorial example in Figure \ref{fig:ptpush_example}(\textsc{a,b}). For a rigorous treatment consult the original work \cite{mcgs_relationship_braid_gps_Birman69} or newer surveys \cite[Sections 4.2 and 3.6]{primer_on_mapping_class_groups_FarbMargalit12} and \cite[Section 1.3]{braids_survey_BirmanBrendle05}. The reader might find it helpful to convince themselves that point-pushing along a simple loop is the same as a product of two Dehn twists, as illustrated on Figure \ref{fig:ptpush_example}(\textsc{c}).

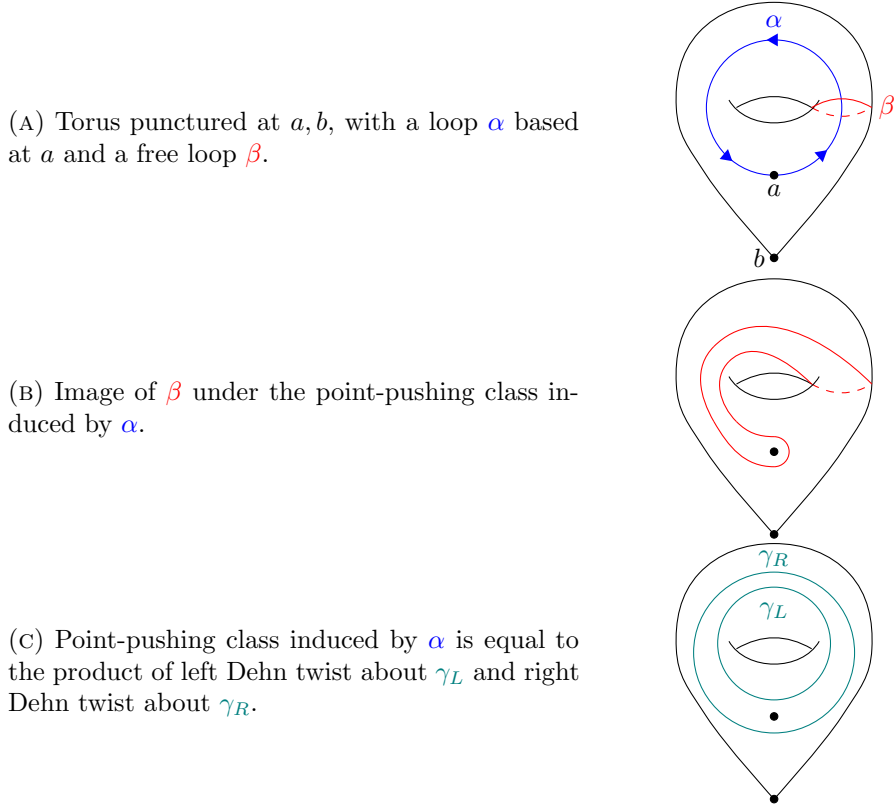
\begin{figure}[h]
\begin{minipage}{.6\textwidth}
(\textsc{a}) Torus punctured at $a,b$, with a loop {\color{blue}$\alpha$} based at $a$ and a free loop {\color{red}$\beta$}.
\end{minipage}
\hfill
\begin{minipage}{.3\textwidth}
\begin{tikzpicture}
\draw plot [smooth,tension=.7] coordinates {(0,-1) (.9,.1) (1.3,1) (1,2) (0,2.4) (-1,2) (-1.3,1) (-.9,.1) (0,-1)};
\draw[red,dashed] (.5,1) to[bend right] (1.3,1);
\draw (-.5,1) to[bend left] (.5,1) (-.6,1.1) to[bend right=60] (.6,1.1);
\draw[blue] (0,1) circle (.9);
\node[fill=blue,regular polygon,regular polygon sides=3,inner sep=1,shape border rotate=70] at (.63,.37) {};
\node[fill=blue,regular polygon,regular polygon sides=3,inner sep=1,shape border rotate=90] at (0,1.9) {};
\node[fill=blue,regular polygon,regular polygon sides=3,inner sep=1,shape border rotate=-10] at (-.63,.35) {};
\draw[red] (.5,1) to[bend left] (1.3,1);
\draw[fill] (0,.1) circle (.05) node[anchor=north] {$a$};\draw[fill] (0,-1) circle (.05) node[anchor=east] {$b$};
\node[blue] at (0,2.15) {$\alpha$};\node[red] at (1.5,1) {$\beta$};
\end{tikzpicture}
\end{minipage}
\begin{minipage}{.6\textwidth}
(\textsc{b}) Image of {\color{red}$\beta$} under the point-pushing class induced by {\color{blue}$\alpha$}.
\end{minipage}
\hfill
\begin{minipage}{.3\textwidth}
\begin{tikzpicture}
\draw plot [smooth,tension=.7] coordinates {(0,-1) (.9,.1) (1.3,1) (1,2) (0,2.4) (-1,2) (-1.3,1) (-.9,.1) (0,-1)};
\draw[red,dashed] (.5,1) to[bend right] (1.3,1);
\draw (-.5,1) to[bend left] (.5,1) (-.6,1.1) to[bend right=60] (.6,1.1);
\draw[fill] (0,.1) circle (.05);\draw[fill] (0,-1) circle (.05);
\draw[red] (.5,1) edge[out=135,in=45] (-.6,1.3);\draw[red] (-.6,1.3) edge[out=225,in=135] (-.5,.5); \draw[red] (-.5,.5) edge[out=-45,in=180] (0,.3);\draw[red] (0,.3) arc(90:-90:.2);\draw[red] (0,-.1) edge[out=180,in=-60] (-.8,.5);\draw[red] (-.8,.5) edge[out=120,in=225] (-.8,1.5);\draw[red] (-.8,1.5) edge[out=45,in=135] (1.3,1);
\end{tikzpicture}
\end{minipage}
\begin{minipage}{.6\textwidth}
(\textsc{c}) Point-pushing class induced by {\color{blue}$\alpha$} is equal to the product of left Dehn twist about {\color{teal}$\gamma_L$} and right Dehn twist about {\color{teal}$\gamma_R$}.
\end{minipage}
\hfill
\begin{minipage}{.3\textwidth}
\begin{tikzpicture}
\draw plot [smooth,tension=.7] coordinates {(0,-1) (.9,.1) (1.3,1) (1,2) (0,2.4) (-1,2) (-1.3,1) (-.9,.1) (0,-1)};
\draw (-.5,1) to[bend left] (.5,1) (-.6,1.1) to[bend right=60] (.6,1.1);
\draw[fill] (0,.1) circle (.05);\draw[fill] (0,-1) circle (.05);
\draw[teal] (0,1.07) circle (.75);\draw[teal] (0,.95) circle (1.07);
\node[teal] at (0,1.5) {$\gamma_L$};\node[teal] at (0,2.2) {$\gamma_R$};
\end{tikzpicture}
\end{minipage}
\caption{Example of a point-pushing map.}
\label{fig:ptpush_example}
\end{figure}

All of the earlier-mentioned algebraic relations have a topological interpretation. A braid and its mapping class induce the same permutation of the punctures, and the pure braid group is the preimage of the pure Mapping Class Group under the point-pushing map. Rotations of tangent vectors become the Dehn twists about the corresponding boundary components, which form a central $\mathbb{Z}^b\leq\Mcg(\Sigma_{g,p,b})$. This subgroup is precisely the kernel of $\Mcg(\Sigma_{g,p,b})\to\Mcg(\Sigma_{g,p+b,0})$ obtained by capping each boundary component with a once-punctured disc.

Additionally, the point-pushing subgroup is precisely the kernel of the homomorphism $\Mcg(\Sigma_{g,p,b})\to\Mcg(\Sigma_{g,0,0})$ obtained by filling the punctures and capping the boundary components with discs. This statement is known as the Birman exact sequence \cite[Theorem 1]{mcgs_relationship_braid_gps_Birman69}\cite[Theorem 4.6]{primer_on_mapping_class_groups_FarbMargalit12}.

\begin{figure}[h]
\begin{minipage}{.47\textwidth}
\begin{tikzpicture}
\draw (-2,1) to[bend left] (-2,0) (-1.9,1.1) to[bend right=60] (-1.9,-.1);
\draw (0,1) to[bend left] (0,0) (.1,1.1) to[bend right=60] (.1,-.1);
\draw (2,1) to[bend left] (2,0) (2.1,1.1) to[bend right=60] (2.1,-.1);
\draw (0,.5) ellipse (3 and 1.5);
\draw[dashed] (2,1.62) to[bend left] (2,1);\draw (2,1.62) to[bend right] (2,1);
\draw[blue] (1.93,1.2) edge[out=220,in=0] (0,-.5);\draw[blue] (0,-.5) edge[out=180,in=-60] (-1.85,.5);\draw[blue,dashed] (-1.85,.5) edge[out=60,in=180] (2.07,1.39);
\node[fill=blue,regular polygon,regular polygon sides=3,inner sep=1,shape border rotate=-30] at (0,-.5) {};
\node[draw=blue,fill=white,regular polygon,regular polygon sides=3,inner sep=1,shape border rotate=30] at (0,1.47) {};
\node[blue] at (.8,.1) {$\alpha$};\node at (2.05,1.9) {$\beta$};\node at (-2.05,1.9) {$\Sigma_3$};
\end{tikzpicture}
(\textsc{a}) Curve $\beta$ cuts genus-3 surface $\Sigma_3$ into a surface of genus 2 with two boundary components. We then have ${\color{blue}\alpha}\in B_{2,0,2}$ which ``moves'' the pair (point,vector) representing one of these components.
\end{minipage}
\hfill
\begin{minipage}{.47\textwidth}
\begin{tikzpicture}
\draw (-2,1) to[bend left] (-2,0) (-1.9,1.1) to[bend right=60] (-1.9,-.1);
\draw (0,1) to[bend left] (0,0) (.1,1.1) to[bend right=60] (.1,-.1);
\draw (2,1) to[bend left] (2,0) (2.1,1.1) to[bend right=60] (2.1,-.1);
\draw (0,.5) ellipse (3 and 1.5);
\draw[dashed] (2,1.62) to[bend left] (2,1);\draw (2,1.62) to[bend right] (2,1);
\draw[teal] (1.7,1.73) edge[out=270,in=0] (0,-.3);\draw[teal] (0,-.3) edge[out=180,in=-60] (-1.87,.7);\draw[teal,dashed] (-1.87,.7) edge[out=60,in=180] (1.7,1.73);
\draw[teal] (1.82,.7) edge[out=250,in=0] (0,-.7);\draw[teal] (0,-.7) edge[out=180,in=-60] (-1.87,.3);\draw[teal,dashed] (-1.87,.3) edge[out=60,in=180] (0,1.4);\draw[teal,dashed] (0,1.4) edge[out=0,in=135] (1.82,.7);
\node at (2.05,1.9) {$\beta$};\node[teal] at (.8,.3) {$\gamma_R$};\node[teal] at (1.5,-.4) {$\gamma_L$};
\end{tikzpicture}
(\textsc{b}) Handle-pushing along {\color{blue}$\alpha$} is the same as right Dehn twist about {\color{teal}$\gamma_R$} and left twist about {\color{teal}$\gamma_L$}, up to some power of Dehn twist about $\beta$ which depends on the number of revolutions the tangent vector does along the way.
\end{minipage}

\vspace{10pt}

\begin{minipage}{.99\textwidth}
\begin{tikzpicture}
\draw (-2,1) to[bend left] (-2,0) (-1.9,1.1) to[bend right=60] (-1.9,-.1);
\draw (0,1) to[bend left] (0,0) (.1,1.1) to[bend right=60] (.1,-.1);
\draw (2,1) to[bend left] (2,0) (2.1,1.1) to[bend right=60] (2.1,-.1);
\draw (0,.5) ellipse (3 and 1.5);
\draw[red] (2,.5) ellipse (.5 and .9);
\end{tikzpicture}
\hfill
\begin{tikzpicture}
\draw (-2,1) to[bend left] (-2,0) (-1.9,1.1) to[bend right=60] (-1.9,-.1);
\draw (0,1) to[bend left] (0,0) (.1,1.1) to[bend right=60] (.1,-.1);
\draw (2,1) to[bend left] (2,0) (2.1,1.1) to[bend right=60] (2.1,-.1);
\draw (0,.5) ellipse (3 and 1.5);
\draw[red] (2.02,1.6) edge[out=-45,in=90] (2.5,.5) (2.5,.5) edge[out=-90,in=20] (2,-.3) (2,-.3) edge[out=200,in=-60] (-1.84,.5);\draw[red,dashed] (-1.84,.5) edge[out=60,in=180] (2.02,1.6);
\end{tikzpicture}

\vspace{5pt}

\begin{tikzpicture}
\draw (-2,1) to[bend left] (-2,0) (-1.9,1.1) to[bend right=60] (-1.9,-.1);
\draw (0,1) to[bend left] (0,0) (.1,1.1) to[bend right=60] (.1,-.1);
\draw (2,1) to[bend left] (2,0) (2.1,1.1) to[bend right=60] (2.1,-.1);
\draw (0,.5) ellipse (3 and 1.5);
\draw[dashed,red] (1,1.91) to[bend left] (1,-.91);\draw[red] (1,1.91) to[bend right] (1,-.91);
\end{tikzpicture}
\hfill
\begin{tikzpicture}
\draw (-2,1) to[bend left] (-2,0) (-1.9,1.1) to[bend right=60] (-1.9,-.1);
\draw (0,1) to[bend left] (0,0) (.1,1.1) to[bend right=60] (.1,-.1);
\draw (2,1) to[bend left] (2,0) (2.1,1.1) to[bend right=60] (2.1,-.1);
\draw (0,.5) ellipse (3 and 1.5);
\draw[dashed,red] (1,-.91)--(1.8,.37) (-1.87,.7) edge[out=60,in=180] (1.4,1.8) (-1.87,.3) edge[out=60,in=180] (0,1.3) (0,1.3) edge[out=0,in=150] (1.89,.9);
\draw[red] (1,-.91) edge[out=170,in=-60] (-1.87,.3) (1.8,.37) edge[out=-160,in=0] (0,-.3) (0,-.3) edge[out=180,in=-45] (-1.87,.7) (1.89,.9)--(1.4,1.8);
\end{tikzpicture}
(\textsc{c}) Some example curves on $\Sigma_3$ (left) and their images under handle-slide induced by {\color{blue}$\alpha$} (right). Note that the image of the top curve depends on the number of revolutions of the tangent vector (which would alter it by a power of Dehn twist about $\beta$).
\end{minipage}
\caption{Example handle slide, a mapping class not found in point-pushing subgroups.}
\label{fig:handle_slide}
\end{figure}

This brings us to the main characters of this section -- the handle-pushing subgroups.
\begin{definition}
Let $B$ be a collection of pairwise non-homotopic essential simple closed curves, and $\Sigma=\bigcup_i\Sigma_i$ be the decomposition into subsurfaces obtained by cutting $\Sigma$ along $B$. The \emph{handle-pushing subgroup} associated to $B$ is the subgroup of $\Mcg(\Sigma)$ generated by inclusions of point-pushing classes from $\Sigma_i$.
\end{definition}
An example of a new mapping class which does not appear in point-pushing subgroups is a \emph{handle slides} illustrated on Figure \ref{fig:handle_slide}. These come from inclusions of pushing boundary components of subsurfaces.

Algebraically, a handle-pushing subgroup is isomorphic to the product of the corresponding point-pushing subgroups, with the identification of the (central) Dehn twists on the two sides of each boundary component, analogously to the situation in Section \ref{sec:pf_csp_subsurface} and the setup of Theorem \ref{thm:csp_subsurface}.

Point-pushing subgroups have CSP by a result of McReynolds \cite{congruence_subgroup_problem_for_pure_braid_groups_McReynolds10}. Theorem \ref{thm:csp_subsurface} immediately applies, giving the CSP of handle-pushing subgroups as Corollary \ref{corr:csp_handlepush}.

\bibliographystyle{alpha}
\bibliography{references}

\end{document}